\documentclass[a4paper,11pt]{amsart} \usepackage{amsmath,amssymb,amsthm}

\usepackage{dsfont}
\usepackage[T1]{fontenc}
\usepackage[english]{babel}
\usepackage[usenames,dvipsnames]{color}

\voffset=0mm      \hoffset=-10mm \textwidth=150mm \textheight=205mm

\newtheorem{theo}{Theorem}[section] \newtheorem{defi}[theo]{Definition}
\newtheorem{lemm}[theo]{Lemma} \newtheorem{prop}[theo]{Proposition}
\newtheorem{coro}[theo]{Corollary} \newtheorem{rema}[theo]{Remark}
\newtheorem{asum}[theo]{Assumption}


\newcommand{\Na}{\mathbb N}                   



\newcommand{\Ra}{\mathbb R}                   

\newcommand{\Ca}{\mathbb C}                   


\newcommand{\scal}[1]{\langle #1 \rangle}




\newcommand{\finpreuve}{\hfill $\Box$}

\newcommand{\name}{$\underline{\qquad \qquad}$}


\usepackage[colorlinks=true,linkcolor=Red,citecolor=Green, plainpages=false,pdfpagelabels]{hyperref}
\begin{document}

\author{Jean-Marc Bouclet}
\address{Institut de Mathématiques de Toulouse, Université Paul Sabatier,  118 route de Narbonne 
31062 Toulouse Cedex}
\email{Jean-Marc.Bouclet@math.univ-toulouse.fr}
\author{Nicolas Burq}
\address{Laboratoire de Math\'ematiques d'Orsay, Universit\'e Paris-Sud, CNRS, Universit\'e Paris-Saclay, B\^atiment~307, 91405 Orsay Cedex, and Institut Universitaire de France}
\email{nicolas.burq@u-psud.fr}
\title[Sharp decay estimates for dispersive equations]{{\bf \sc Sharp resolvent and time decay estimates for dispersive equations on asymptotically Euclidean backgrounds}}
\begin{abstract}  The purpose of this article is twofold. First we give a very robust method for proving sharp time decay estimates for the most classical three models of dispersive Partial Differential Equations, the wave, Klein-Gordon and Schr\"odinger equations, on curved geometries, showing under very general assumptions the exact same decay as for the Euclidean case. Then we also extend these decay properties to the case of boundary value problems. 
\\
\ \vskip .1cm 
\noindent {\sc R\'esum\'e.} Dans cet article nous présentons d'une part une méthode très robuste permettant d'obtenir des estimées de décroissance optimales pour trois modèles classiques d'équations aux dérivées partielles dispersives: les ondes, Klein-Gordon et Schr\"odinger, dans des géométries courbées. Nous obtenons sous des hypothèses générales le même taux de décroissance que dans le cas Euclidien. D'autre part, nous étendons ces résultats aux cas de problèmes aux limites.
\end{abstract} 
\ \vskip -1cm \hrule \vskip 1cm 
\maketitle
 
\hfil \rule{6cm}{0.5pt} \hfil 
\vskip 1cm


\section{Introduction}
\setcounter{equation}{0}

The main goal of this paper is to get sharp time decay estimates for three models of dispersive equations - the Schr\"odinger, wave and Klein-Gordon equations - associated to an asymptotically flat metric, and with (or without) an obstacle. We  also consider power resolvent estimates for the related stationary problem. Recall first the classical results for the Euclidean Laplace operator  on $ \Ra^n $, $ n \geq 2 $. Given any compact subset $ K  $ of $ \Ra^n $, we have the following estimates, first for the Schr\"odinger flow,
\begin{equation}
\Big\| {\mathds 1}_{ K } e^{it \Delta}  {\mathds 1}_{K } \Big\|_{L^2  \rightarrow L^2 } \lesssim \scal{t}^{- \frac{n}{2}} ,\label{Schrodloc}
\end{equation}
then for the wave flow
\begin{align}
\Big\| {\mathds 1}_{ K } \cos (t |D|)  {\mathds 1}_{K } \Big\|_{L^2  \rightarrow L^2 } & \lesssim  \scal{t}^{-n} \label{Waveloc0} \\
\Big\| {\mathds 1}_{ K } \frac{\sin (t |D|)}{|D|} {\mathds 1}_{K } \Big\|_{L^2  \rightarrow L^2 } & \lesssim  \scal{t}^{1-n} , \label{Waveloc1}
\end{align}
 (here $ |D| = \sqrt{-\Delta} $), and finally for the Klein-Gordon flow 
\begin{equation}
\Big\| {\mathds 1}_{ K } \cos (t  \scal{D} )  {\mathds 1}_{K } \Big\|_{L^2  \rightarrow L^2 } + \Big\|  {\mathds 1}_{ K } \frac{\sin (t \scal{D})}{\scal{D}}  {\mathds 1}_{K }  \Big\|_{L^2  \rightarrow L^2 } \lesssim \scal{t}^{-\frac{n}{2}}, \label{KGloc}
\end{equation}
where $ \scal{D} = \sqrt{-\Delta + 1} $.  The estimates (\ref{Schrodloc}) and (\ref{KGloc}) are sharp in all dimensions while (\ref{Waveloc0}) and (\ref{Waveloc1}) are sharp in even dimensions (see Appendix~\ref{appendiceoptimal}). In this paper, we will obtain the same optimal decay rates when the flat Euclidean metric is replaced by a long range perturbation and in the presence of an obstacle (with Dirichlet boundary conditions). This question is mostly related to {\it low} frequencies. The contribution of high frequencies is by no mean trivial but it may rather be responsible for a loss of derivatives on initial data  than on a lack of time decay, a phenomenon which also shows up  in the more involved context of black holes space-times (see~\cite{Tata1} and the references therein).  More precisely, in many cases including the models considered here,  the time decay of high frequencies can be as fast as we wish (exponential) if one accepts a possible derivative loss on initial data (depending on the behaviour of the geodesic flow). Elementary evidences of this  are displayed in Section~\ref{hautesfrequ}.
 Let us point out that in the above estimates we only focus on time decay rates. For this reason, we only consider the $ L^2 \rightarrow L^2 $ operator norm  (and don't take into account the possible smoothing properties of some of the above operators)  as well as compact cutoffs $ {\mathds 1}_K $, though they could be replaced by suitable polynomial weights $ \scal{x}^{-\nu} $.

For very short range perturbations of the Laplacian,  sharp  time decay rates have been proved in many papers among which \cite{JeKa,Rauc,Mura,JSS,Wang2}. The special  framework of perturbations by potential decaying fast enough at infinity is also related to dispersive estimates for which there exists a large literature so we only quote the  recent papers \cite{GoGr1,GoGr2} and refer the reader to the bibliography therein.

  For long range perturbations by metrics, the picture is still not complete. Sharp estimates for the wave equation in even dimension have been obtained by Guillarmou-Hassell-Sikora \cite{GHS} in the case of scattering manifolds. For the Schr\"odinger and wave equations, Schlag-Soffer-Staubach have also obtained sharp estimates on surfaces  for radial perturbations of exact conical models \cite{SSS1}.  For long range perturbations of the Euclidean metric and a large family of dispersive equations, the best general results to our knowledge are due to Bony-H\"afner  \cite{BoHa1}, but their estimates  are only $ \epsilon$-sharp in the sense that their decay rates are optimal up to $ \scal{t}^{\epsilon} $. In the present paper, we shall remove this $ \scal{t}^{\epsilon} $ error. Finally, for obstacle problems, to the best of our knowledge, the only results previously available are for the euclidean metric~\cite{T1, BuDu}.

   Although we shall not give sharp estimates for the wave equation in odd dimension, we complete this introduction by quoting recent progress or open problems in this direction. A $ \scal{t}^{-3} $ decay has been obtained by Tataru in \cite{Tata1} in the more general context of $ 3+1 $ asymptotically flat stationary space-times. Guillarmou-Hassell-Sikora \cite{GHS} have similary proved a $ \scal{t}^{-n} $ decay for certain asymptotically conical manifolds of odd dimension $n$.  We recall that in odd dimensions, the strong Huygens principle implies that the left hand sides of (\ref{Waveloc0}) and (\ref{Waveloc1}) vanish identically for $ t $ large enough. Proving or disproving a similar property (e.g. a fast decay) for the local energy associated to the wave equation for a long range perturbation of the flat metric is still an open problem.
   

Our approach in this paper is to get sharp low frequency estimates for the resolvent and the spectral measure of the stationary problem. The time decay estimates are then obtained by Fourier transform arguments, writing the different flows as oscillatory integrals of the spectral measure. Our results could be extended to asymptotically conical manifolds; we work in the simpler asymptotically Euclidean context to emphasize the main points of the approach and avoid technical complications.  

Everywhere below, we work in dimension $ n \geq 2 $ and let $ \Omega $ be either $ \Ra^n $ or $ \Ra^n \setminus {\mathcal K} $ with $ {\mathcal K} $ a smooth compact obstacle, and the operators we consider will satisfy  
\begin{asum}\label{asum.1.1}
We consider a differential operator of the form
$$ P = - \mu (x)^{-1} \sum_{j,k} \partial_j \Big(  \mu (x)  g^{jk}(x)  \partial_k\Big)  $$
with smooth real valued coefficients, $ (g^{jk}(x)) $ positive definite, $ \mu (x) > 0 $ for each $x$,  and such that

\begin{equation}
   \mu - 1 \in S^{-\rho}, \qquad g^{jk} - \delta_{jk} \in S^{-\rho}  \qquad \mbox{for some} \ \rho > 0,  \label{conditionsdesymboles}
\end{equation}
where $ \delta_{jk} $ is the usual Kronecker symbol.  Note  that $ \mu^{-1} - 1 \in S^{-\rho} $ too. Here we use the standard symbol classes $  S^{m} = S^{m}(\Ra^n) $  of functions such that $ \partial^{\alpha}a (x) = O (\scal{x}^{m-|\alpha|})  $; when $ {\mathcal K} $ is non empty, it is understood that the coefficients of $ P $ are restrictions to $ \Omega $ of smooth functions on $ \Ra^n $.
\end{asum}

  The operator $ P $ is formally self-adjoint on $ L^2 (\Omega , \mu (x) dx) $, i.e. with respect to the measure $ \mu (x) dx $.  We still denote by $P$ its Dirichlet realization (which is the usual one if $ \Omega = \Ra^n $). We point out that the spaces $ L^p (\Omega,dx) $ and $ L^p (\Omega,\mu(x)dx) $ coincide and have equivalent norms so we shall mostly denote them as $ L^p $, but will sometimes refer to the measure when  needed.

For such operators, one has 
\begin{equation}
 P \geq 0   \label{signedeV}
\end{equation}
so that $ \sqrt{P} $ is well defined. We note that by ellipticity of $ (g^{jk}) $, the domain of $ \sqrt{P} $ coincides with $ H_0^1 (\Omega) $ and 
 \begin{equation}
  \Big\| \nabla u \Big\|_{L^2} \leq C \Big\| \sqrt{P} u \Big\|_{L^2} \label{pourNash}
\end{equation}
for all $ u \in H_0^1 (\Omega) $ or (equivalently) all $ u \in C_0^{\infty} (\Omega) $. It ensures that we have a Nash inequality which in turn provides convenient estimates on the heat semigroup $ e^{-tP} $ (see Section~\ref{uncertainty}). 



We study the outgoing and incoming resolvents of $ P $
\begin{equation}
  (P - \lambda  \pm i 0 )^{-1} := \lim_{\epsilon \rightarrow 0^+} (P-\lambda \pm i \epsilon)^{-1}  \label{exlim}
\end{equation}
with  $ \lambda > 0 $, and the related spectral measure given by Stone's formula
\begin{equation}
 E_P^{\prime} (\lambda) := \frac{1}{2 i \pi} \Big( (P-\lambda - i 0)^{-1} - (P - \lambda + i 0)^{-1} \Big) .  \label{Stonesformula}
\end{equation} 
The existence of the limits in (\ref{exlim}) in weighted $ L^2 $ spaces is standard and due to \cite{JMP} together with the fact that $P$ has no embedded eigenvalues \cite{KoTa}.

We state our main technical results on resolvents and the spectral measure. Throughout the paper,  $ || \cdot || $ denotes both the $ L^2 (\Omega) \rightarrow L^2 (\Omega) $ and $ L^2 (\Ra^n) \rightarrow L^2 (\Ra^n) $ operator norms. 

 \begin{theo} \label{mesurespectrale} Let $ n \geq 2 $, $ \lambda_0 > 0 $,  $ k \in \Na $ and $ \nu > k $. Assume that the operator $P$ satisfies Assumption~\ref{asum.1.1}. 
 Then the function 
 $$  \lambda \longmapsto \scal{x}^{-\nu} E_P^{\prime} (\lambda) \scal{x}^{-\nu}  $$
 is $ C^{k-1} $ on $ (0,\lambda_0 ]$ with values in (bounded)  operators on $ L^2 $. If in addition $ \nu >  \frac{n}{2} $, then 
 $$ \Big\|  \frac{d^j}{d \lambda^j} \scal{x}^{-\nu} E_P^{\prime} (\lambda) \scal{x}^{-\nu}  \Big\| \leq C \lambda^{  \frac{n}{2}  -1- j}. $$
 \end{theo}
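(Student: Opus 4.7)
The plan is to reduce Theorem~\ref{mesurespectrale} to sharp low-frequency weighted resolvent estimates. By Stone's formula (\ref{Stonesformula}) and formal differentiation,
\[\frac{d^j}{d\lambda^j}E_P'(\lambda)=\frac{j!}{2i\pi}\big[(P-\lambda-i0)^{-j-1}-(P-\lambda+i0)^{-j-1}\big],\]
so both the quantitative bound and the $C^{k-1}$ regularity reduce to controlling $\langle x\rangle^{-\nu}(P-\lambda\pm i0)^{-j-1}\langle x\rangle^{-\nu}$ for $0\leq j\leq k-1$, together with its continuous dependence on $\lambda$. Existence and finite differentiability of the boundary values on any compact subinterval of $(0,\infty)$ are classical (limiting absorption principle of \cite{JMP}, absence of embedded eigenvalues \cite{KoTa}); what is new is the quantitative behavior as $\lambda\to 0^+$, where the expected power $\lambda^{n/2-1-j}$ mimics the free Euclidean spectral density.

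For low frequencies I would use a dyadic semiclassical rescaling. Writing $\lambda=h^2\mu$ with $\mu\sim 1$ and conjugating by the unitary $U_h u(x)=h^{n/2}u(hx)$ transforms $P-\lambda$ into $h^2(P_h-\mu)$, where $P_h$ has the same form as $P$ with coefficients $g^{jk}(hy),\mu(hy)$ satisfying (\ref{conditionsdesymboles}) uniformly in $h\in(0,1]$ and degenerating to the flat coefficients on any bounded set of $y$ as $h\to 0$. The weights obey $U_h^{-1}\langle x\rangle^{-\nu}U_h=\langle hy\rangle^{-\nu}$. Tracking the powers of $h$ arising from the resolvent, the derivatives, and the weights, the theorem reduces to showing that, uniformly in $h\in(0,1]$ and $\mu$ in a compact subinterval of $(0,\infty)$,
\[\Big\|\langle hy\rangle^{-\nu}\partial_\mu^j E_{P_h}'(\mu)\langle hy\rangle^{-\nu}\Big\|\leq C h^n,\]
together with $C^{k-1}$ dependence on $\mu$. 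The hypothesis $\nu>n/2$ enters precisely here: $\int\langle hy\rangle^{-2\nu}dy=h^{-n}\int\langle z\rangle^{-2\nu}dz$ converges only under this condition, and the $h^{-n}$ factor is what compensates the $h^{-n}$ growth of the free spectral density at energy $h^{-2}$ after the rescaling back to $z=hy$.

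The main obstacle is the \emph{uniformity in $h$} of this semiclassical limiting absorption principle in the long-range setting (and in the obstacle case). For each fixed $h>0$ it follows from Mourre theory with the dilation generator $A=\tfrac12(x\cdot D+D\cdot x)$, but as $h\to 0$ the commutator $i[P_h,A]$ acquires long-range tails from $g^{jk}(hy)-\delta^{jk}$ and $\mu(hy)-1$ that only decay as $\langle hy\rangle^{-\rho}$ and are therefore $O(1)$ on the relevant scales $|y|\lesssim h^{-1}$. I expect the proof to rely on a positive commutator argument with a conjugate operator adapted to the low-energy long-range regime (in the spirit of \cite{BoHa1}, refined to remove their $\scal{t}^\epsilon$ loss), together with the positivity $P\geq 0$ from (\ref{signedeV}) and the Nash-type bound (\ref{pourNash}) to exclude a zero resonance; in the obstacle case, one further has to truncate the dilation field near $\partial\Omega$ to preserve the Dirichlet condition, with the resulting compactly supported error handled by the standard fixed-$h$ limiting absorption principle. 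Once this uniform estimate is secured, the condition $\nu>k$ supplies the regularity needed for $k-1$ derivatives, and undoing the rescaling reconstructs the global bound on $(0,\lambda_0]$.
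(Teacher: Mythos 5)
Your plan is essentially the paper's strategy: differentiate Stone's formula, rescale at scale $\lambda^{1/2}$, and prove a uniform-in-$\lambda$ Mourre estimate with a truncated dilation generator, using Nash/heat-kernel bounds both to establish the low-frequency "uncertainty" condition (Assumption~\ref{conditioncruciale}, Proposition~\ref{compactness}) and to convert the $A$-weights into physical weights $\langle x\rangle^{-\nu}$ (Proposition~\ref{calculfonctionneldilatation}), exactly as in Theorem~\ref{resolvantetreslocalisee}. Two small corrections to your sketch: conjugating by $U_hu(x)=h^{n/2}u(hx)$ produces coefficients $g^{jk}(y/h)$ and weights $\langle y/h\rangle^{-\nu}$, not $g^{jk}(hy)$ and $\langle hy\rangle^{-\nu}$ — it is at large argument $y/h$ that the metric flattens, and $\int \langle y/h\rangle^{-2\nu}dy = h^{n}\int\langle z\rangle^{-2\nu}dz$ then supplies the desired $h^n$ gain for $\nu>n/2$; and the region $\lambda^{1/2}|x|\lesssim 1$ (including the obstacle, once $\lambda$ is small) is not disposed of by a fixed-$\lambda$ LAP but precisely by the uncertainty condition combined with the insertion of the free spectral cutoff $f(P/\lambda)$, which the paper makes explicit via the identity $E_P'(\lambda)=f(P/\lambda)E_P'(\lambda)$ (this is what removes the logarithmic loss present in Theorem~\ref{theoresolventeintro}).
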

 
 Note that the behaviour of the spectral measure is independent of  the oddness or evenness of the dimension, unlike the one of  resolvents displayed in Theorem~\ref{theoresolventeintro} below. Note also that if $ \scal{x}^{-\nu} $ is replaced by a compactly supported (or fast decaying) weight, we obtain a smooth function of $ \lambda > 0 $ with values in bounded operators but whose  behaviour of derivatives  is more and more  singular at $0 $.
 
   \begin{theo} \label{theoresolventeintro}  Let $ n \geq 2 $, $ \lambda_0 > 0 $,  $ k \in \Na $ and $ \nu > k $. Assume that the operator $P$ satisfies Assumption~\ref{asum.1.1}.
   Then there is a constant $ C > 0$ such that for all $ \lambda \in (0,\lambda_0 ] $,
 $$ \Big\| \scal{ x}^{-\nu} \big( P - \lambda \pm i 0 \big)^{-k} \scal{x}^{-\nu} \Big\| \leq C \lambda^{\min \{ 0 , \frac{n}{2} - k \}} $$
 unless $n$ is even and $ k = \frac{n}{2} $ in which case
  $$ \Big\| \scal{ x}^{-\nu }\big( P - \lambda \pm i 0 \big)^{-\frac{n}{2}} \scal{x}^{-\nu } \Big\|  \leq C |\log \lambda| . $$
 \end{theo}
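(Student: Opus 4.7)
\emph{Plan.} The strategy is to derive the resolvent estimates from the spectral measure estimates of Theorem~\ref{mesurespectrale} via a Stieltjes-type integral representation.

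For $\lambda \in (0, \lambda_0]$, the functional calculus yields
$$\scal{x}^{-\nu}(P - \lambda \mp i0)^{-k}\scal{x}^{-\nu} = \int_0^{+\infty} \frac{F(\mu)}{(\mu - \lambda \mp i0)^k}\, d\mu, \qquad F(\mu) := \scal{x}^{-\nu} E_P'(\mu) \scal{x}^{-\nu}.$$
Introducing a cutoff $\chi \in C_0^\infty(\Ra)$ with $\chi \equiv 1$ on $[0, \lambda_0]$ and supported in $[-1, 2\lambda_0]$ splits this into a low-frequency piece, to which Theorem~\ref{mesurespectrale} applies, and a high-frequency piece. The latter is harmless: on $\mathrm{supp}(1-\chi)$, the denominator stays bounded away from zero uniformly in $\lambda \in (0, \lambda_0]$, and $\scal{x}^{-\nu}(1-\chi)(P)(P-\lambda)^{-k}\scal{x}^{-\nu}$ is uniformly bounded through standard functional calculus (combined with the boundedness of $\scal{x}^{-\nu}(P+1)^{-N}\scal{x}^{-\nu}$).

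For the low-frequency part $\int_0^{+\infty} \chi(\mu) F(\mu)/(\mu - \lambda \mp i0)^k\, d\mu$, I would integrate by parts $k-1$ times in $\mu$ via $(\mu - \lambda \mp i0)^{-k} = \frac{(-1)^{k-1}}{(k-1)!} \partial_\mu^{k-1} (\mu - \lambda \mp i0)^{-1}$. The boundary contributions at $\mu = 0$ are of the form $F^{(j)}(0^+)/\lambda^{k-1-j}$; by Theorem~\ref{mesurespectrale} one has $\|F^{(j)}(\mu)\| = O(\mu^{n/2-1-j})$, so these terms vanish for $j < n/2 - 1$, which covers all relevant indices as long as $k \leq n/2$. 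There remains a Cauchy-type integral
$$\frac{1}{(k-1)!} \int_0^{+\infty} \frac{(\chi F)^{(k-1)}(\mu)}{\mu - \lambda \mp i0}\, d\mu,$$
whose integrand is of size $O(\mu^{n/2-k})$ on $[0, 2\lambda_0]$. Splitting $g := (\chi F)^{(k-1)}$ as $g(\mu) = g(\lambda) + (g(\mu) - g(\lambda))$ and rescaling $\mu = \lambda s$ produces exactly the three regimes in the statement: $\lambda^{n/2-k}$ when $k < n/2$, $|\log \lambda|$ in the critical case $n = 2k$, and $O(1)$ when $k > n/2$.

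The main obstacle is the range $k > n/2$, where two issues appear simultaneously: the boundary terms at $\mu = 0$ no longer obviously vanish, and the pointwise bound $O(\mu^{n/2-k})$ becomes non-integrable at $0$. I would address this by performing only $\lceil n/2 \rceil - 1$ integrations by parts, so that all boundary terms still vanish, and then estimating the residual integral $\int (\chi F)^{(\lceil n/2 \rceil - 1)}(\mu)/(\mu - \lambda \mp i0)^{k - \lceil n/2 \rceil + 1}\, d\mu$ directly by a dyadic decomposition near $\mu = \lambda$ together with a Taylor expansion of $F$ at $\lambda$, exploiting the additional regularity available from $\nu > k$. The existence of the limits $(P-\lambda \pm i0)^{-1}$ in weighted $L^2$ and the high-frequency control invoked above are the classical ingredients already cited in the introduction.
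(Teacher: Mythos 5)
Your route is genuinely different from the paper's. The paper derives Theorem~\ref{theoresolventeintro} from Proposition~\ref{aveclog}, which builds the bound by a dyadic decomposition of the spectral parameter ($F(P)=F(P/\lambda)+\sum_{\ell}G(\Lambda^{-\ell}P/\lambda)$) and applies the Mourre-based, spectrally localized estimate of Theorem~\ref{resolvantetreslocalisee} at each scale. You instead write the resolvent as a Stieltjes integral of $F(\mu)=\scal{x}^{-\nu}E_P'(\mu)\scal{x}^{-\nu}$ and integrate by parts, using Theorem~\ref{mesurespectrale} to control the boundary terms. This is a reasonable and more "classical" strategy, and for $k\geq \frac n2$ the sketch can be made to work.

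The gap is in the regime $k<\frac n2$. Your argument invokes the quantitative derivative bound $\|F^{(j)}(\mu)\|=O(\mu^{n/2-1-j})$ from Theorem~\ref{mesurespectrale} to kill the boundary contributions $F^{(j)}(0^+)/\lambda^{k-1-j}$. But Theorem~\ref{mesurespectrale} only provides that bound under the \emph{additional} hypothesis $\nu>\frac n2$; under the stated hypothesis of Theorem~\ref{theoresolventeintro}, namely $\nu>k$, you may have $k<\nu\leq \frac n2$ when $k<\frac n2$. In that range Theorem~\ref{mesurespectrale} only gives $C^{k-1}$ regularity with no rate of decay of $F^{(j)}$ as $\mu\to 0^+$, so nothing forces the boundary terms $F^{(j)}(0^+)/\lambda^{k-1-j}$ (with $j\leq k-2$) to vanish; indeed already $F(0^+)/\lambda^{k-1}$ is uncontrolled. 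The paper circumvents exactly this by keeping the free parameter $s\in[0,\frac n4]$ of Theorem~\ref{resolvantetreslocalisee}: picking $s$ with $\nu>2s>k$ yields a weaker but still useful bound $O(\mu^{2s-1-j})$ (rather than $O(\mu^{n/2-1-j})$), which suffices to make the relevant boundary contributions vanish and the geometric sum converge. If you replace your appeal to Theorem~\ref{mesurespectrale} by a version of Theorem~\ref{resolvantetreslocalisee} with $2s$ strictly between $k$ and $\nu$, your integration-by-parts strategy can be repaired for $k<\frac n2$. A secondary caution: in the borderline case $k=\frac n2$, the hypothesis $\nu>\frac n2$ gives $(\chi F)^{(n/2-1)}$ continuous but not obviously with the H\"older modulus needed to bound $\int(g(\mu)-g(\lambda))/(\mu-\lambda\mp i0)\,d\mu$; again the extra flexibility of the exponent $s$ (or the dyadic decomposition near $\mu=\lambda$ that you allude to) is what one actually needs, not just the statement of Theorem~\ref{mesurespectrale}.
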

 
 The example of the flat Laplacian shows that the logarithmic divergence in even dimensions is sharp (see e.g. formula (2.2) in \cite{Sjos}).

We next consider applications to evolution equations.
\begin{theo} \label{theoremededecroissance}  Let $ n \geq 2 $ and $ F \in C_0^{\infty} (\Ra) $. Assume that the operator $P$ satisfies Assumption~\ref{asum.1.1}.
Then one has:
\begin{itemize}
\item{Schr\"odinger decay: if $ \nu >  \big[ \frac{n}{2} \big] + 2 $
\begin{equation}
 \Big\| \scal{x}^{-\nu}  F (P) e^{itP} \scal{x}^{-\nu} \Big\| \lesssim \scal{t}^{-\frac{n}{2}} . \label{Schrodingerprincipal}
 \end{equation}}
\item{Wave decay: if $ \nu > n + 1$,
\begin{equation}
 \Big\| \scal{x}^{-\nu}  F (P) \frac{\sin (t \sqrt{P})}{\sqrt{P}} \scal{x}^{-\nu} \Big\| \lesssim \scal{t}^{1-n} \label{waveprincipal1}
\end{equation}
and, 
\begin{equation}
 \Big\| \scal{x}^{-\nu}  F (P) e^{it \sqrt{P}} \scal{x}^{-\nu} \Big\| \lesssim \scal{t}^{-n} . \label{waveprincipal2}
\end{equation} }
\item{Klein-Gordon decay: if $\nu >  \big[ \frac{n}{2} \big] + 2$
\begin{equation}
 \Big\| \scal{x}^{-\nu}  F (P) e^{it\sqrt{P+1}} \scal{x}^{-\nu} \Big\| \lesssim \scal{t}^{-\frac{n}{2}} . \label{KleinGordonprincipal}
\end{equation}}
\end{itemize}
\end{theo}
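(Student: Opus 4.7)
The strategy is to use the spectral theorem to express each evolution as an oscillatory integral against the spectral density and then to estimate it via Theorem~\ref{mesurespectrale} together with a dyadic decomposition and integration by parts. Setting $A(\lambda) := \scal{x}^{-\nu} E_P^\prime(\lambda) \scal{x}^{-\nu}$, each of the four quantities to bound has the form
\[
I(t) = \int_0^\infty F(\lambda) \, m(\lambda) \, e^{i t \varphi(\lambda)} \, A(\lambda) \, d\lambda,
\]
where the phase $\varphi$ equals $\lambda$, $\pm\sqrt{\lambda}$, or $\sqrt{\lambda+1}$ (Schr\"odinger, wave, Klein--Gordon respectively) and $m(\lambda)=1$ except for \eqref{waveprincipal1}, where $m(\lambda)=1/\sqrt{\lambda}$. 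Because $F \in C_0^\infty(\Ra)$ the integrand is supported on a bounded interval $[0,\Lambda]$. On the compact subinterval $[\lambda_0,\Lambda]$ the amplitude $A$ is $C^{k-1}$ with uniformly bounded derivatives (by Theorem~\ref{mesurespectrale}) and $\varphi'$ is bounded below, so repeated integration by parts yields arbitrary polynomial decay. The real content of the theorem lies in the analysis on $(0,\lambda_0]$, where the sharp bounds $\|A^{(j)}(\lambda)\| \lesssim \lambda^{\frac{n}{2}-1-j}$ of Theorem~\ref{mesurespectrale} are in force.

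For the low-frequency piece I insert a dyadic partition $1=\sum_{j\geq 0}\chi(2^j\lambda)$ with $\chi$ smooth and supported in $[1/2,2]$, and write $I(t)=\sum_j I_j(t)$. In the Schr\"odinger and Klein--Gordon cases, $\varphi'$ and $1/\varphi'$ are smooth and bounded on $[0,\lambda_0]$, and each dyadic piece $I_j(t)$ admits two competing bounds: the trivial one,
\[
\|I_j(t)\| \lesssim \int_{\lambda \sim 2^{-j}} \lambda^{\frac{n}{2}-1}\,d\lambda \lesssim 2^{-jn/2},
\]
and, after $k$ integrations by parts via $e^{it\varphi}=(it\varphi')^{-1}\partial_\lambda e^{it\varphi}$ together with Leibniz (each derivative hitting $\chi(2^j\lambda)$ or $A$ costing a factor $2^j$),
\[
\|I_j(t)\| \lesssim t^{-k}\,2^{j(k-n/2)}.
\]
The two regimes balance at the scale $2^j\sim t$, where each equals $t^{-n/2}$; geometric summation for $k>n/2$ then gives $\|I(t)\|\lesssim t^{-n/2}$ and establishes \eqref{Schrodingerprincipal} and \eqref{KleinGordonprincipal}. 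The assumption $\nu>[n/2]+2$ is precisely what Theorem~\ref{mesurespectrale} requires in order to supply the $k=[n/2]+1$ derivatives used here.

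For the wave equation the phase $\sqrt{\lambda}$ is singular at $0$, so I substitute $\lambda=\mu^2$, $d\lambda=2\mu\,d\mu$, turning $I(t)$ into
\[
\int_0^\infty F(\mu^2)\,m(\mu^2)\,e^{\pm i t \mu}\,A(\mu^2)\cdot 2\mu\, d\mu
\]
with the smooth linear phase $\mu$. The chain rule applied to Theorem~\ref{mesurespectrale} shows that $B(\mu):=A(\mu^2)\cdot 2\mu$ satisfies $\|B^{(j)}(\mu)\|\lesssim \mu^{n-1-j}$ in the $e^{\pm i t \sqrt{P}}$ case, while in \eqref{waveprincipal1} the factor $m=1/\sqrt{\lambda}$ cancels the Jacobian and leaves $\|B^{(j)}(\mu)\|\lesssim \mu^{n-2-j}$. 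Running the same dyadic IBP argument with these exponents produces $\scal{t}^{-n}$ and $\scal{t}^{1-n}$ respectively. The main obstacle is precisely the wave case: one must exploit the chain-rule structure of $A(\mu^2)$ -- not merely the scalar bound above -- together with the full set of $n$ derivatives allowed by $\nu>n+1$, in order to close the dyadic sums at the sharp rate without a logarithmic loss at the balanced scale $2^j\sim t$.
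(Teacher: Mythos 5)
Your approach is the same in spirit as the paper's (write the propagator as an oscillatory integral of the spectral measure, change variables in the wave case so the phase is linear, and integrate by parts), but the technical implementation differs: you use a dyadic (Littlewood--Paley) decomposition with two competing bounds per shell and a geometric summation, whereas the paper uses a single threshold split at $\lambda = |t|^{-1}$ together with an abstract reduction through the weighted symbol classes $\mathcal{H}^s(\varepsilon)$ (Propositions~\ref{propsimple} and~\ref{propdecroissance}). The paper's choice, after $k=[s]+1$ (resp.\ $k=s$) integrations by parts with no boundary terms --- which is where item~4 of Proposition~\ref{propsimple} is needed --- reduces all of Theorem~\ref{theoremededecroissance} to the elementary cases $-1 < s \le 0$ of Proposition~\ref{propdecroissance}, where the decisive move is to split at $\lambda=|t|^{-1}$ and integrate by parts once more than the naive count would suggest on the tail; this is precisely what kills the logarithm that would otherwise appear. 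Your dyadic argument automatically avoids boundary terms, which is pleasant, but the price is that the geometric summation is at the critical exponent when $s$ is an integer, as happens for the wave equation.

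That is exactly where your proof has a genuine gap. With $B(\mu)=2\mu A(\mu^2)$ and $\|B^{(j)}\|\lesssim\mu^{n-1-j}$ (so $s=n-1$), the dyadic piece bound after $k$ integrations by parts is $t^{-k}2^{j(k-s-1)}$. Summing over $2^j < t$ converges geometrically to $t^{-s-1}$ only when $k> s+1 = n$; taking $k=n$ gives $\sum_{2^j<t} t^{-n}$, i.e.\ the very logarithmic loss $t^{-n}\log t$ that you flag at the end of your write-up. You acknowledge this and say that one must ``exploit the chain-rule structure of $A(\mu^2)$'' to close the argument with only the $n$ derivatives afforded by $\nu>n+1$, but you do not actually produce that argument, and it is not clear that the generic bounds of Theorem~\ref{mesurespectrale} alone allow it: the issue is not the chain rule but the endpoint behaviour at the critical dyadic scale $2^j\sim t$. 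The paper's proof resolves this precisely by abandoning the dyadic sum and performing the endpoint analysis directly on $[|t|^{-1},\Lambda]$, where the boundary terms at $\lambda=|t|^{-1}$ are of the right size and the last integral converges because $\|b^{([s]+2)}\|\lesssim\lambda^{s-[s]-2}$ is integrable away from zero. To complete your proof of \eqref{waveprincipal1}--\eqref{waveprincipal2} you should either adopt this threshold-split argument (in effect re-deriving Proposition~\ref{propdecroissance}), or make rigorous the assertion that a cleverer use of the structure of $A(\mu^2)$ eliminates the logarithm. The Schr\"odinger and Klein--Gordon parts of your proposal are fine, since there $k=[n/2]+1 > s+1 = n/2$ strictly and the dyadic sum closes without issue.
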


Note that $ \sqrt{P} $ and $ \sqrt{P+1} $ are well defined since $ P $ is nonnegative.

The time decays in (\ref{Schrodingerprincipal}) and (\ref{KleinGordonprincipal}) are optimal in all dimensions. In even dimensions, the estimates (\ref{waveprincipal1}) and (\ref{waveprincipal2}) are sharp too.
Theorem~\ref{theoremededecroissance} is a consequence of Theorem~\ref{mesurespectrale} and classical integration by part techniques displayed in Section~\ref{sectiontime}. 

We next give time decay estimates without high frequency cutoff. We assume the {\it non trapping condition}, which  means that geodesics associated to the classical Hamiltonian $ \sum_{j,k} g^{jk}(x) \xi_ j \xi_k $ (with non zero initial speed) reflecting on the boundary according to the laws of geometric optics escape to infinity as time goes to infinity.

\begin{theo} \label{nonspecloc} Let $ n \geq 2$, $ \Omega = \Ra^n $. Assume that the operator $P$ satisfies Assumption~\ref{asum.1.1}  and the non trapping condition. Then

\begin{itemize}
\item{Schr\"odinger decay:} if $ \nu >  \big[ \frac{n}{2} \big] + 2 $
\begin{equation}
 \Big\| \scal{x}^{-\nu}  e^{itP}   \scal{x}^{-\nu} \Big\|_{L^2 \rightarrow H^{\frac{n}{2}}} \lesssim |t|^{-\frac{n}{2}} .
 \end{equation}
\item{Wave decay:} if $ \nu > n + 1$,
\begin{equation}
 \Big\| \scal{x}^{-\nu}   \frac{\sin (t \sqrt{P})}{\sqrt{P}}  \scal{x}^{-\nu} \Big\|_{L^2 \rightarrow H^1} \lesssim \scal{t}^{1-n} 
\end{equation}
and, 
\begin{equation}
 \Big\| \scal{x}^{-\nu}  \cos \big( t \sqrt{P} \big)   \scal{x}^{-\nu} \Big\| \lesssim \scal{t}^{-n} .  
\end{equation} 
\item{Klein-Gordon decay:} if $\nu >  \big[ \frac{n}{2} \big] + 2$
\begin{equation}
 \Big\| \scal{x}^{-\nu}   e^{it\sqrt{P +1}}   \scal{x}^{-\nu} \Big\| \lesssim \scal{t}^{-\frac{n}{2}} .
\end{equation}
\end{itemize}
\end{theo}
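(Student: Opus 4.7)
The plan is to split each flow at a fixed energy and treat the low and high frequency contributions separately. Choose $F \in C_0^\infty(\Ra)$ with $F \equiv 1$ on a sufficiently large bounded interval and decompose
\[
e^{itP} = F(P)e^{itP} + (1-F(P))e^{itP},
\]
and similarly for $\sqrt{P}$ and $\sqrt{P+1}$. The piece involving $F(P)$ falls verbatim under Theorem~\ref{theoremededecroissance}; moreover, since $F$ has compact support, functional calculus yields $F(P) : L^2 \to H^s$ bounded for every $s \geq 0$, so the low-frequency contribution automatically maps $L^2$ into $H^{n/2}$ or $H^1$ with the desired time decay.

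The core of the argument is the high-frequency term $(1-F(P))e^{itP}$, and this is where the non-trapping assumption is essential. Under non-trapping, the classical semiclassical resolvent estimates (Morawetz, Burq, Cardoso--Vodev, Robert and others) give
\[
\Big\| \scal{x}^{-\nu}(P-\lambda\pm i0)^{-k}\scal{x}^{-\nu} \Big\| \lesssim \lambda^{-k/2}, \qquad \lambda \geq \lambda_0,
\]
for $k \in \Na$ and $\nu$ large enough. Substituting Stone's formula, one writes
\[
\scal{x}^{-\nu}(1-F(P))e^{itP}\scal{x}^{-\nu} = \int_0^{\infty} e^{it\lambda}(1-F(\lambda))\,\scal{x}^{-\nu}E_P^\prime(\lambda)\scal{x}^{-\nu}\,d\lambda,
\]
and $N$ integrations by parts in $\lambda$ produce a factor $t^{-N}$, the derivatives of the weighted spectral density being controlled by the above semiclassical bounds (each $\lambda$-derivative of $E_P^\prime$ generating one extra power of the resolvent). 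Taking $N$ as large as needed yields arbitrary polynomial decay at the price of a finite fixed loss of $L^2$-derivatives on the initial data; this loss is absorbed by the $H^{n/2}$ (respectively $H^1$) norm on the right-hand side, using that $(1-F(P))(1+P)^{-s/2}$ is bounded on $L^2$ for any $s \geq 0$.

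For the wave and Klein-Gordon flows the same decomposition works: on the support of $1-F(\lambda)$, the phases $e^{\pm it\sqrt{\lambda}}$ and $e^{\pm it\sqrt{\lambda+1}}$ have derivatives bounded away from $0$, so repeated integration by parts again gives arbitrary polynomial decay at high frequency. The factor $1/\sqrt{P}$ in $\sin(t\sqrt{P})/\sqrt{P}$ is bounded at high frequency and its low-frequency singularity is handled by Theorem~\ref{theoremededecroissance} applied to $F(P)\sin(t\sqrt{P})/\sqrt{P}$.

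The main obstacle will be the bookkeeping for the high-frequency integrations by parts: justifying the iterated semiclassical resolvent bounds together with their weighted $\lambda$-derivatives in the range of $\nu$ prescribed by the statement, and verifying that the resulting derivative loss does not exceed the Sobolev index advertised on the right-hand side. Non-trapping is indispensable here: without it, the cutoff resolvent may grow exponentially at high frequency, and no Sobolev gain on initial data could compensate.
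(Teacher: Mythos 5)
Your fixed-energy split and the treatment of the low-frequency piece by Theorem~\ref{theoremededecroissance} match the paper. The gap is in the high-frequency part, and it is not a matter of bookkeeping: for the wave and Klein--Gordon flows your integration-by-parts scheme cannot produce a convergent integral, no matter how many derivatives you spend.

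To see this, take $\cos(t\sqrt P)$. In the variable $\mu=\sqrt\lambda$ the integral becomes $\int e^{\pm it\mu}(1-F(\mu^2))\,a(\mu^2)\,2\mu\,d\mu$ with $a(\lambda)=\scal{x}^{-\nu}E_P^{\prime}(\lambda)\scal{x}^{-\nu}$, and each integration by parts trades a factor $t^{-1}$ for one $\mu$-derivative. The non-trapping high-frequency bound is $\big\|\scal{x}^{-\nu}(P-\lambda\pm i0)^{-k}\scal{x}^{-\nu}\big\|\lesssim\lambda^{-k/2}$, hence $|a^{(j)}(\lambda)|\lesssim\lambda^{-(j+1)/2}$, and this is sharp already for $P=-\Delta$. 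The dominant term after $N$ integrations by parts is therefore of size $\mu^{N+1}\,(\mu^2)^{-(N+1)/2}=O(1)$, uniformly in $\mu$: the resolvent gain is exactly cancelled by the Jacobian, for every $N$, so the $\mu$-integral diverges. Equivalently, in the dyadic picture each band $\mu\sim h^{-1}$ contributes $h^{k-M(k+1)}\scal{t}^{-k}=h^{-1}\scal{t}^{-k}$ (this is exactly the ``drawback'' (\ref{drawback}) with $M(k)=k$), and $\sum_{h=2^{-j}}h^{-1}$ diverges; neither more integrations by parts nor the interpolation trick (\ref{lossinh}) removes the divergence, since it only reduces the loss to $h^{\nu^{\prime}-\nu}$ with $\nu^{\prime}<\nu$. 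The same cancellation kills your argument for $e^{it\sqrt{P+1}}$, and for $\sin(t\sqrt P)/\sqrt P$ the $H^1$ gain only makes the integrand grow. The Schr\"odinger flow happens to survive because its phase $e^{it\lambda}$ costs no Jacobian, but that is only one of three cases.

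This is precisely why the paper does not argue as you do. After the dyadic decomposition, each band is treated by Wang's argument (Proposition~\ref{pourconclusionnontrapping}): split $f(h^2P)$ as in (\ref{decompositionareutiliser}) into outgoing and incoming microlocal pieces $A^{\pm}$, a small remainder $R$, and a compactly supported piece $A^0$; handle $A^{\pm}$ by the Isozaki--Kitada propagation estimates (Proposition~\ref{propIsozakiKitada}), which produce $\scal{s}^{-\nu^{\prime}}$ decay with \emph{no} loss in $h$; and handle $A^0$ by evolving for a fixed time $T$ and invoking \emph{Egorov's theorem together with the non-trapping condition} to rewrite $U_h(T/2)A^0U_h(-T/2)$ as an outgoing operator modulo $O(h^\infty)$, see (\ref{ideetrick})--(\ref{ideetrick2}). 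Non-trapping therefore enters not merely through $M(k)=k$ — which is all you use and is insufficient — but through the geometric fact that the classical flow moves the compact set into an outgoing region in finite time (Appendix~\ref{refoutgoing}). Only after this microlocal refinement can the dyadic pieces be summed.
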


We finally state the analogous result for obstacles. In the next theorem, we denote by $ \scal{x} $ a positive continuous function which coincides with the usual Japanese bracket $ (1+|x|^2)^{1/2} $ at infinity but which is equal to $1$ near the obstacle.

\begin{theo} \label{nonspeclocobst} Let $ n \geq 2$. Assume that the operator $P$ satisfies Assumption~\ref{asum.1.1}  and the non trapping condition for obstacles (Definition~\ref{nontrappingobstacle}), then

\begin{itemize}
\item{Schr\"odinger decay: if $ \nu >  \big[ \frac{n}{2} \big] + 2 $
\begin{equation}
 \Big\| \scal{x}^{-\nu}  e^{itP}   \scal{x}^{-\nu} \Big\|_{L^2 \rightarrow {\rm Dom}(P^{n/4})} \lesssim |t|^{-\frac{n}{2}} .
 \end{equation}}
\item{Wave decay: if $ \nu > n + 1$,
\begin{equation}
 \Big\| \scal{x}^{-\nu}   \frac{\sin (t \sqrt{P})}{\sqrt{P}}  \scal{x}^{-\nu} \Big\|_{L^2 \rightarrow H^1_0} \lesssim \scal{t}^{1-n} 
\end{equation}
and, 
\begin{equation}
 \Big\| \scal{x}^{-\nu}  \cos \big( t \sqrt{P} \big)   \scal{x}^{-\nu} \Big\| \lesssim \scal{t}^{-n} .  
\end{equation} }
\item{Klein-Gordon decay: if $\nu >  \big[ \frac{n}{2} \big] + 2$
\begin{equation}
 \Big\| \scal{x}^{-\nu}   e^{it\sqrt{P +1}}   \scal{x}^{-\nu} \Big\| \lesssim \scal{t}^{-\frac{n}{2}} .
\end{equation}}
\end{itemize}
\end{theo}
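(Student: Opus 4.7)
The strategy is to mirror the proof of Theorem~\ref{nonspecloc} from the boundaryless case, combining Theorem~\ref{theoremededecroissance} (which is already stated for obstacle geometries) with the non-trapping \emph{high-frequency} cutoff resolvent bounds specific to the Dirichlet problem outside $\mathcal K$. Fix $F \in C_0^\infty(\Ra)$ equal to $1$ on a neighbourhood of $[0,\lambda_0]$ with $\lambda_0$ as in Theorem~\ref{theoresolventeintro}, and split each evolution operator $\Phi(P)$ appearing in the statement as
\begin{equation*}
\Phi(P) = F(P)\Phi(P) + (1-F(P))\Phi(P).
\end{equation*}

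The low-frequency piece $\scal{x}^{-\nu}F(P)\Phi(P)\scal{x}^{-\nu}$ decays at the claimed rate by Theorem~\ref{theoremededecroissance}. Refining the target to ${\rm Dom}(P^{n/4})$ for Schr\"odinger, or to $H^1_0$ for $\sin(t\sqrt P)/\sqrt P$, costs nothing: $F(P): L^2 \to {\rm Dom}(P^k)$ is bounded for every $k \in \Na$ by functional calculus and commutes with $\Phi(P)$, so any finite gain of regularity on the output is absorbed.

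For the high-frequency piece, Stone's formula and repeated integration by parts in the spectral variable $\lambda$ yield arbitrarily fast decay in $t$, provided the derivatives $\partial_\lambda^j \scal{x}^{-\nu} E_P^\prime(\lambda)\scal{x}^{-\nu}$ grow only polynomially for $\lambda \to \infty$. These derivatives are iterated outgoing/incoming cutoff resolvents, and under the non-trapping condition (Definition~\ref{nontrappingobstacle}) they obey the standard semiclassical non-trapping bound
\begin{equation*}
\bigl\| \scal{x}^{-\nu}(P - \lambda \pm i0)^{-k}\scal{x}^{-\nu}\bigr\| \lesssim \lambda^{-k/2}, \qquad \lambda \gg 1,
\end{equation*}
proved by Morawetz and Vainberg for exterior domains and extended to more general non-trapping situations via Melrose--Sj\"ostrand propagation of generalised bicharacteristics and a positive commutator argument. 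Each integration by parts gains a factor $t^{-1}$ at the price of one such resolvent iterate, so the high-frequency remainder decays like $\scal{t}^{-M}$ for every $M$, modulo an arbitrary but finite loss of derivatives readily accommodated by the target space specified in the theorem.

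The delicate point, and the main obstacle, is the high-frequency resolvent estimate above in the presence of the boundary: glancing and gliding rays force one to work with generalised bicharacteristics rather than with classical billiard trajectories, so the non-trapping assumption must be read in the Melrose--Sj\"ostrand sense. Modulo this by-now standard black box, the proof of Theorem~\ref{nonspeclocobst} is a direct transcription of the argument for Theorem~\ref{nonspecloc}, with the same final decay rates.
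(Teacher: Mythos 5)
Your proposal reduces the high-frequency analysis to a spectral integration by parts together with the non-trapping resolvent bound $\|\scal{x}^{-\nu}(P-\lambda\pm i0)^{-k}\scal{x}^{-\nu}\|\lesssim \lambda^{-k/2}$, and you say the resulting ``arbitrary but finite loss of derivatives'' is ``readily accommodated by the target space.'' This is precisely the argument that the paper sets up in~(\ref{drawback}) and then \emph{rejects}: each integration by parts costs a power of $h^{-1}$, so after a dyadic partition of unity the pieces contribute $h^{-M(k+1)}\scal{t}^{-k}$ and the sum over $h=2^{-\ell}$ diverges. Interpolation~(\ref{lossinh}) shrinks the loss to $h^{\nu'-\nu}$ with $\nu-\nu'>0$ arbitrarily small, but never to zero. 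The target spaces in Theorem~\ref{nonspeclocobst} offer no slack in which to hide this: for $\cos(t\sqrt P)$ and Klein--Gordon the estimate is $L^2\to L^2$, and for Schr\"odinger and $\sin(t\sqrt P)/\sqrt P$ the target is \emph{more} regular than $L^2$ (a smoothing gain), which is the opposite of absorbing a loss. So the route you describe gives $\scal{t}^{-\nu'}$ for $\nu'<\nu$ in a space that loses derivatives, not the sharp statement.

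The paper instead follows Wang's scheme (Proposition~\ref{pourconclusionnontrapping}): decompose $f(h^2P)=A^0+A^++(A^-)^*+h^N R$ as in~(\ref{decompositionareutiliser}) with $A^\pm$ supported in outgoing/incoming areas, use the Isozaki--Kitada propagation estimates (Proposition~\ref{propIsozakiKitada}), which carry \emph{no} $h$-loss, for $A^\pm$, use the interpolation trick only for the $O(h^N)$ remainder $R$ (where the loss is killed by $h^N$), and crucially propagate the compactly supported cutoff $A^0$ to an outgoing region by an Egorov-type argument~(\ref{ideetrick})--(\ref{ideetrick2}). In the boundaryless case this last step is semiclassical Egorov; in the obstacle case it is replaced by the Melrose--Sj\"ostrand propagation of $WF_b$ and a whole block of lemmas (Propositions~\ref{roughWFb}, \ref{propwf}, Corollary~\ref{hyppropsing}, Lemmata~\ref{lemmEgorovalinfini}, \ref{lemmata2}, Proposition~\ref{demandedutravail}), plus the Lebeau trick to turn the Schr\"odinger propagator into a wave-type problem so that Melrose--Sj\"ostrand applies. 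Your proposal invokes Melrose--Sj\"ostrand only to justify the high-frequency resolvent bound, which is not the point: the resolvent bound alone, through your integration by parts, does not close the argument, and the actual use of Melrose--Sj\"ostrand in the paper is to control the $b$-wavefront set of $e^{-iT\sqrt P}\chi_0 u_0$, i.e.\ to replace Egorov near the obstacle. That is the genuine new content of Theorem~\ref{nonspeclocobst} relative to the boundaryless case, and it is missing from your argument.
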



The plan of the paper is the following. In Section~\ref{sectioncommutatorestimate} we prove, under a low frequency uncertainty condition, Mourre type estimates. In Section~\ref{uncertainty} we show that this assumption is satisfied under the general  long range perturbation Assumption~\ref{asum.1.1}, with or without an obstacle (Dirichlet boundary conditions). In Section~\ref{resolvent} we deduce sharp resolvent estimates from the Mourre estimates in Section~\ref{sectioncommutatorestimate}. In Section~\ref{sectiontime} we deduce sharp low frequency estimates from the resolvent estimates. In Section~\ref{hautesfrequ} we recall how to deal with the high frequencies without obstacle (using Egorov Theorem), and finally in Section~\ref{boundary} we show how to replace Egorov Theorem by propagation of singularities arguments to deal with obstacles.

\section{Commutator estimates} \label{sectioncommutatorestimate}
\setcounter{equation}{0}
In this section, we construct a suitable conjugate operator to derive resolvent estimates for the family of operators $ ( P / \lambda )_{0 < \lambda \ll 1}$ by mean of Mourre theory. The main results are stated in Propositions~\ref{bornessup} and~\ref{propMourreestimate}. 

Most of the analysis in this part depends only on the form of the operator near infinity and is very robust. It does not depend on the dimension nor on the presence of an obstacle or a potential. More precisely, we exhibit a low frequency uncertainty condition (Assumption~\ref{conditioncruciale} below) which ensures that we have an exact positive commutator estimate (Proposition~\ref{propMourreestimate}).

To emphasize the robustness of our method and prepare future works, we shall add a very short range potential to $P$, i.e. consider
\begin{equation} P_V := P + V, \qquad V \in S^{-2-\rho} \ \mbox{real valued}. 
\end{equation}
We keep the same notation $ P_V $ for the self-adjoint realization on $ L^2 (\Omega,\mu dx) $ with domain $ D (P_V) = D (P)$. 

\begin{asum} \label{conditioncruciale} For all $ \sigma > 0 $ and for all $ \epsilon > 0 $, there are $ \lambda_1 > 0 $ small enough and $  f \in C_0^{\infty}(0,+\infty) $ equal to $1$ near $1$ such that
\begin{equation} \Big\| \scal{\lambda^{\frac{1}{2}}x}^{-\sigma} f (P_V / \lambda) \Big\| \leq \epsilon , 
\end{equation}
for all $ 0 < \lambda \leq \lambda_1 $.
\end{asum}
 In Section~\ref{uncertainty}, we shall see that this condition is satisfied if $ V \equiv 0 $.
It is natural to expect that Assumption~\ref{conditioncruciale} is satisfied  if $ P_V $ has no zero eigenvalue nor zero resonant state.

We first construct a suitable conjugate operator.
When working at fixed positive energy, the usual conjugate operator for  Hamiltonians which are self-adjoint with respect to the Lebesgue measure is the generator of dilations $ A $,
\begin{equation}
  A = \frac{x \cdot D + D \cdot x}{2}, \qquad e^{i t A} u (x) = e^{t \frac{n}{2}} u (e^{t} x) .  \label{defA}
\end{equation}
To take into account the self-adjointness of $ P_V $  with respect to $ \mu (x) dx $, the first idea is to consider
\begin{equation*}
 A_{\mu}  :=  \mu^{-\frac{1}{2}} A \mu^{\frac{1}{2}}  \ =  \ A - i V_{\mu},  \qquad  \mbox{with} \ \  V_{\mu} =  \frac{ (x \cdot \nabla \mu)}{2 \mu}  \in S^{-\rho} . 
\end{equation*}  Technically, it is convenient to localize this operator where $ \lambda^{\frac{1}{2}}|x| \gtrsim 1 $ (this is   consistent with Assumption~\ref{conditioncruciale} which will allow to handle the region where $ \lambda^{\frac{1}{2}} |x| \lesssim 1 $) so we rather consider
 $$ A^{\lambda} : = \frac{ (1-\chi)(\lambda^{\frac{1}{2}} x) x \cdot D + D \cdot x (1-\chi) (\lambda^{\frac{1}{2}}x) }{2} $$
where $ \chi \in C_0^{\infty} (\Ra^n) $ is equal to $1$ on a large enough ball centered at $ 0 $ and containing $ {\mathcal K} $, and then
\begin{equation}
\begin{aligned}
  A_{\mu}^{\lambda} & :=  \mu^{-\frac{1}{2}} A^{\lambda}  \mu^{\frac{1}{2}}  \\
  & =  (1-\chi)(\lambda^{\frac{1}{2}}x) A_{\mu} + i W \big( \lambda^{\frac{1}{2}} x \big) \label{expressionAmulambda}
\end{aligned}  \end{equation}
where $ W = \frac{1}{2} x \cdot (\nabla \chi) $ belongs to $ C_0^{\infty} (\Ra^n \setminus 0) \cap C_0^{\infty}(\Omega) $.
The self-adjointness  of $ A^{\lambda} $ on $ L^2 (\Omega,dx) $  comes from the fact that it is the generator of the unitary group
$$ U^{\lambda} (t) \varphi (x) := \mbox{det} (d \Phi^{\lambda}_t (x))^{\frac{1}{2}} \varphi (\Phi^{\lambda}_t (x))  $$
where $ \Phi^{\lambda}_t $ is the flow of the vector field $ (1 - \chi ) (\lambda^{\frac{1}{2}}x) x $. It is a simple exercise to check that this flow is complete on $ \Omega $.  The operator $ A_{\mu}^{\lambda} $ is then rigorously defined as the generator of the unitary group $ \mu^{-1/2} U^{\lambda}(t) \mu^{1/2} $. 

Let us introduce the subspace
$$ {\mathcal D} = \{ \varphi \in D (P_V) \ | \  \varphi \in C^{\infty} (\overline{\Omega}), \ \  \varphi (x) = 0 \ \ \mbox{for} \ |x| \gg 1  \} . $$
In the case when $ \Omega = \Ra^n $, $ {\mathcal D} = C_0^{\infty} (\Ra^n) $. The requirement that $ \varphi $ belongs to the domain $ D (P_V) $ ensures that $ \varphi $ satisfies the boundary condition when $ \partial \Omega $ is not empty. The interest of this dense subspace is that it allows to justify all formal manipulations required in Mourre theory.
In particular, $ {\mathcal D} $ is stable by $ U^{\lambda} (t) $; indeed, from its expression $ U^{\lambda} (t) $ preserves the smoothness and the vanishing outside a large ball. Moreover, since $ \Phi^t_{\lambda} (x) = x $ in the region where the vector field $ (1-\chi )(\lambda^{1/2}x)x$ vanishes, $ U^{\lambda} (t) $ preserves the boundary condition. One can also check that $  D (P_V)$ is preserved by $ U^{\lambda} (t) $. Another useful property is that $ {\mathcal D} $ is dense in $ D (P_V) $ for the graph norm. More precisely, if $ u $ belongs to $ D (P_V) $, one can easily check that $ u_{\epsilon} := \chi (\epsilon x) \psi (\epsilon P_V) u $ (with $ \psi \in C_0^{\infty} (\Ra) $ equal to $1$ near zero) belongs to $ {\mathcal D} $ and converges to $u$ for the graph norm of $ D (P_V) $. 

Let us use the standard notation for $ k \geq 1 $
$$ \mbox{ad}_{i A_{\mu}^{\lambda}}^k  B := \big[ \mbox{ad}_{ i A_{\mu}^{\lambda}}^{k-1} B , i A_{\mu}^{\lambda}   \big] , \qquad  \mbox{ad}_{i A_{\mu}^{\lambda}}  B := \big[ B , i A_{\mu}^{\lambda} \big] , $$
for the iterated commutators with $ i A_{\mu}^{\lambda} $.

Estimates on powers of the resolvent of $ P_V/ \lambda $ rest on two types of information. The first one is given by the following {\it upper} bounds on iterated commutators.

\begin{prop} \label{bornessup} Let $ n \geq 2 $. For all $ k \geq 1 $, the operator $ \emph{ad}_{i A_{\mu}^{\lambda}}^k \big( P_V/ \lambda \big) $ is $ P_V/ \lambda $ bounded, uniformly in $ \lambda \in (0,1] $. In other words,
$$ \Big\|   \emph{ad}_{i A_{\mu}^{\lambda}}^k \big( P_V/ \lambda \big)  \varphi \Big\|_{L^2} \leq C  \Big\| (P_V/\lambda + i) \varphi \Big\|_{L^2} $$
with a constant $C$ independent of $ \lambda  $ and $ \varphi \in {\mathcal D} $.
\end{prop}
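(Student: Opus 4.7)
The natural approach is to exploit the scaling $y=\sqrt\lambda\,x$, implemented by the unitary $U_\lambda\colon L^2(\Omega,dx)\to L^2(\sqrt\lambda\,\Omega,dy)$, $(U_\lambda\varphi)(y)=\lambda^{-n/4}\varphi(\lambda^{-1/2}y)$. Under conjugation by $U_\lambda$, the operator $P/\lambda$ becomes $\tilde P_\lambda=-\mu_\lambda^{-1}\partial_{y_j}(\mu_\lambda g_\lambda^{jk}\partial_{y_k})$ with $g_\lambda^{jk}(y):=g^{jk}(\lambda^{-1/2}y)$ and $\mu_\lambda(y):=\mu(\lambda^{-1/2}y)$; because $g^{jk}-\delta_{jk},\mu-1\in S^{-\rho}$, these coefficients and all $y$-derivatives are uniformly bounded on any fixed region $\{|y|\geq r>0\}$, and $\tilde P_\lambda$ is uniformly elliptic with Dirichlet boundary on the shrinking obstacle $\sqrt\lambda\,\mathcal K\subset B(0,R\sqrt\lambda)$. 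At the same time, $U_\lambda^{-1}A^\lambda U_\lambda=\tilde A:=\tfrac12\bigl((1-\chi)(y)\,y\cdot D_y+\mathrm{h.c.}\bigr)$ is \emph{independent} of $\lambda$, so $\tilde A_{\mu,\lambda}:=U_\lambda^{-1}A^\lambda_\mu U_\lambda=\mu_\lambda^{-1/2}\tilde A\mu_\lambda^{1/2}$ is a first-order differential operator whose principal part is $\lambda$-independent and supported in $\{|y|\geq R\}$, well away from the vanishing obstacle.

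The plan is to prove by induction on $k$ that $\mathrm{ad}_{i\tilde A_{\mu,\lambda}}^k(\tilde P_\lambda)$ is a second-order differential operator whose coefficients vanish in a fixed neighbourhood of $0$ and, on their support, are uniformly $C^\infty_b$ bounded in $\lambda\in(0,1]$. The base case is a direct computation: on $\{1-\chi\equiv 1\}$ one gets $[\tilde P_\lambda,i\tilde A]=2\tilde P_\lambda+\tilde R_{1,\lambda}$, where the coefficients of $\tilde R_{1,\lambda}$ are built from $(y\cdot\nabla_y g_\lambda^{jk})(y)=(x\cdot\nabla g^{jk})(\lambda^{-1/2}y)$ and analogous $\mu$-derivatives; since $x\cdot\nabla g^{jk}\in S^{-\rho}$, these are uniformly bounded together with their $y$-derivatives on $\{|y|\geq R/2\}$. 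On the support of $\nabla\chi$ one picks up manifestly bounded compactly supported extra contributions. The inductive step is routine: each further commutation with $\tilde A_{\mu,\lambda}$ yields derivatives and products of coefficients in the same uniform class. The potential enters through $V_\lambda/\lambda:=V(\lambda^{-1/2}y)/\lambda$, which is singular near $y=0$; but for $k\geq 1$ every iterated commutator introduces the factor $(1-\chi)(y)$, and on its support $|V(\lambda^{-1/2}y)|\leq C\langle\lambda^{-1/2}y\rangle^{-2-\rho}\leq C\lambda^{1+\rho/2}|y|^{-2-\rho}$, so $\mathrm{ad}_{i\tilde A_{\mu,\lambda}}^k(V_\lambda/\lambda)$ is a uniformly bounded multiplier of size $O(\lambda^{\rho/2})$.

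Given this structural description, the proof concludes by uniform interior $H^2$-regularity: since the coefficients of $\mathrm{ad}_{i\tilde A_{\mu,\lambda}}^k(\tilde P_{V,\lambda})$ are uniformly bounded and supported in $\{|y|\geq R/2\}$, away from the Dirichlet boundary, the uniform ellipticity of $\tilde P_\lambda$ with uniformly smooth coefficients on $\{|y|\geq R/4\}$ gives $\|\partial_y^2\varphi\|_{L^2(\{|y|\geq R/2\})}\leq C(\|\tilde P_\lambda\varphi\|_{L^2}+\|\varphi\|_{L^2})$ with $C$ independent of $\lambda$, and pulling back through $U_\lambda$ yields the claimed bound by $\|(P_V/\lambda+i)\varphi\|_{L^2}$. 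The main obstacle I foresee is a careful bookkeeping of the induction so that the uniform $C^\infty_b$ bounds on the coefficients and their support in $\{|y|\geq R/2\}$ are preserved at each step; the cutoff $(1-\chi)(\lambda^{1/2}x)$ built into $A^\lambda$ is what makes the whole scheme work, simultaneously providing $\lambda^{\rho/2}$-smallness for long-range corrections and the potential, and keeping the analysis away from the Dirichlet boundary.
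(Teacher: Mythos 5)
Your proposal is correct and rests on the same rescaling idea as the paper (your $U_\lambda$ is exactly the paper's $e^{-i\tau A}$ with $\tau=\ln\lambda^{1/2}$), and you identify the same key structural features: the rescaled $A^\lambda$ has a $\lambda$-independent principal part, the iterated commutators inherit the support of $1-\chi$ away from the obstacle, and in that region the rescaled coefficients of $P$ are uniformly $C^\infty_b$ with $\lambda^{\rho/2}$-smallness of the long-range corrections (this is the content of the paper's Proposition~\ref{propsymboles}). You diverge from the paper at the final step: the paper observes that $\mathrm{ad}_{iA_\mu^\lambda}^k(P_V/\lambda)=e^{i\tau A}\,p_\lambda^{(k)}(x,D)\,e^{-i\tau A}$ for a bounded family $p_\lambda^{(k)}\in S^{2,0}$, multiplies by $\varrho(\lambda^{1/2}x)$ and uses the parametrix of $(P_V/\lambda-i)^{-1}$ constructed in Proposition~\ref{lemmparametrix} together with the Calder\'on--Vaillancourt theorem, whereas you use the fact that the commutators are genuine second-order \emph{differential} operators and close the argument with a uniform \emph{interior} $H^2$-regularity estimate for $\tilde P_\lambda$ in the region $\{|y|\gtrsim 1\}$ (where the obstacle $\sqrt\lambda\,\mathcal K$ has shrunk away and the potential $V_\lambda/\lambda$ is a small bounded perturbation). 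Both closings are correct; your variant is more elementary in that it avoids the pseudodifferential parametrix machinery, at the cost of invoking interior elliptic estimates, while the paper's version fits into the $S^{m,\sigma}$-calculus framework that is reused elsewhere (e.g.\ Lemma~\ref{presquetout}).
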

As a consequence of this proposition and the density of $ {\mathcal D} $ in $ D (P_V) $ we get that the commutators $ \mbox{ad}_{i A_{\mu}^{\lambda}}^k \big( P_V/ \lambda \big) $, defined first on $ {\mathcal D} $, have bounded closures to $ D (P_V) $.
Using standard techniques, it can also be used to show that, if $ u \in D (A^{\lambda}_{\mu}) $, then the sequence $ u_{\epsilon} $ defined above converges to $u$ in $ D (A^{\lambda}_{\mu})  $. Therefore $ {\mathcal D} $ is dense in $ D (P_V) \cap D (A^{\lambda}_{\mu}) $. 
\

The second key result is the following {\it lower} bound.
\begin{prop} \label{propMourreestimate}   Let $ n \geq 2 $. If Assumption~\ref{conditioncruciale} holds, then  $  \big[ P_V / \lambda  , i A_{\mu}^{\lambda} \big] $ satisfies a strong Mourre estimate at  energy $1$. In other words, there exists $ \lambda_0 > 0 $ small enough and $ f \in C_0^{\infty} ((0,+\infty),\Ra) $  such that $ f \equiv 1 $ near $1$ and
\begin{equation}
 f (P_V/\lambda)  \big[ P_V / \lambda  , i A_{\mu}^{\lambda} \big] f (P_V/\lambda) \geq f (P_V/\lambda)^2 ,  \label{inegalitecommutateurpositif}
\end{equation}
for all $ \lambda \in (0, \lambda_0 ] $.
\end{prop}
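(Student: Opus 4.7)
The plan is to expand $[P_V/\lambda, iA_\mu^\lambda]$ as a positive principal term $2(1-\chi)(\lambda^{1/2}x)(P_V/\lambda)$ plus two types of error: remainders relatively $(P_V/\lambda)$-bounded with constant $O(\lambda^{\rho/2})$, and terms spatially localized in $\{\lambda^{1/2}|x|\lesssim 1\}$. The factor $2$ in front of the principal term leaves a margin to beat the right-hand side $f(P_V/\lambda)^2$ once $f$ is localized near $1$, while Assumption~\ref{conditioncruciale} is invoked at the end both to replace $(1-\chi)(\lambda^{1/2}x)$ by $1$ up to $O(\epsilon)$ and to discard the localized error terms.

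First I would use $A_\mu^\lambda = \mu^{-1/2}A^\lambda\mu^{1/2}$ to reduce to $[\widetilde{P_V}, iA^\lambda]$ with $\widetilde{P_V} = \mu^{1/2}P_V\mu^{-1/2}$. The conjugated operator $\widetilde{P_V}$ is symmetric on $L^2(dx)$ and of the form $-\sum_{j,k}\partial_j a^{jk}\partial_k + \widetilde V$ with $a^{jk} - \delta_{jk}\in S^{-\rho}$ and $\widetilde V \in S^{-2-\rho}$. Where the cutoff equals $1$, $A^\lambda$ coincides with the standard dilation generator, for which the classical Euler identity gives $[-\Delta, iA] = -2\Delta$. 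Extending this computation to $\widetilde{P_V}$, each derivative falling on a coefficient produces a factor of $(x\cdot\nabla)a^{jk}$ or $(x\cdot\nabla)\widetilde V$, in the classes $S^{-\rho}$ and $S^{-2-\rho}$ respectively; restricted to $|x|\gtrsim \lambda^{-1/2}$ these are pointwise bounded by $\lambda^{\rho/2}$ and $\lambda^{1+\rho/2}$, producing a remainder $R_\lambda$ with $\|R_\lambda \varphi\|\lesssim \lambda^{\rho/2}\|(P_V/\lambda + i)\varphi\|$ via Proposition~\ref{bornessup}. Derivatives of $\chi(\lambda^{1/2}x)$ and the correction $iW(\lambda^{1/2}x)$ from \eqref{expressionAmulambda} contribute a separate term $K_\lambda$, a differential operator of order at most one, supported in $\{\lambda^{1/2}|x|\lesssim 1\}$ and $P_V/\lambda$-bounded uniformly in $\lambda$.

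Putting these ingredients together yields
$$[P_V/\lambda, iA_\mu^\lambda] = 2(1-\chi)(\lambda^{1/2}x)(P_V/\lambda) + R_\lambda + K_\lambda.$$
Setting $g(\tau) = \tau f(\tau)$ and $h(\tau) = \tau f(\tau)^2$, both in $C_0^\infty(0,\infty)$, functional calculus gives $(P_V/\lambda)f(P_V/\lambda) = g(P_V/\lambda)$ and
$$f(P_V/\lambda)(1-\chi)(\lambda^{1/2}x)(P_V/\lambda)f(P_V/\lambda) = h(P_V/\lambda) - f(P_V/\lambda)\chi(\lambda^{1/2}x)g(P_V/\lambda).$$
Compactness of the support of $\chi$ gives $|\chi(y)|\leq C_\sigma\langle y\rangle^{-\sigma}$ for every $\sigma$, so Assumption~\ref{conditioncruciale} yields $\|\chi(\lambda^{1/2}x)g(P_V/\lambda)\|\leq \epsilon$ for $\lambda\leq\lambda_1$. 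Factoring $K_\lambda$ between two enlarged cutoffs $\widetilde\chi(\lambda^{1/2}x)$ lets the same argument bound $\|f(P_V/\lambda)K_\lambda f(P_V/\lambda)\|$ by $C\epsilon$, while the sandwich of $R_\lambda$ is $O(\lambda^{\rho/2})$ in operator norm.

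Finally, choosing $f$ supported in $[1-\eta, 1+\eta]$ with $\eta$ small so that $h(\tau)\geq 2(1-\eta)f(\tau)^2$ pointwise, and then $\epsilon$ and $\lambda_0$ small enough that the aggregate error is below $1-2\eta$, one concludes $f(P_V/\lambda)[P_V/\lambda, iA_\mu^\lambda]f(P_V/\lambda) \geq f(P_V/\lambda)^2$. The main technical obstacle will be the bookkeeping of the commutator expansion: isolating the Euclidean positive contribution $2P_V/\lambda$ and correctly classifying each remainder piece as either benefiting from the decay of the long-range coefficients on $\{|x|\gtrsim \lambda^{-1/2}\}$, or being spatially localized enough for Assumption~\ref{conditioncruciale} to absorb it. A subtler point is that products like $\chi(\lambda^{1/2}x)(P_V/\lambda)f(P_V/\lambda)$ are a priori unbounded and must first be rewritten, via functional calculus, as $\chi(\lambda^{1/2}x)g(P_V/\lambda)$ with $g\in C_0^\infty(0,\infty)$ before the uncertainty hypothesis can apply.
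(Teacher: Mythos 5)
Your proposal follows essentially the same route as the paper: decompose $\big[P_V/\lambda, iA_\mu^\lambda\big]$ into the principal positive term $2(1-\chi)(\lambda^{1/2}x)(P_V/\lambda)$ plus remainders, sandwich by $f(P_V/\lambda)$, and invoke Assumption~\ref{conditioncruciale} to absorb the compactly-localized and decaying errors, then exploit the factor $2$ to close the inequality. The paper carries out the commutator expansion via the rescaled pseudodifferential calculus of Propositions~\ref{propsymboles} and~\ref{lemmparametrix} and lumps both of your error types into a single term with a $\scal{\lambda^{1/2}x}^{-\rho}$ weight, whereas you separate an $O(\lambda^{\rho/2})$ relatively bounded piece from a spatially localized piece; these are equivalent bookkeeping choices. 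Two small slips in your write-up: Proposition~\ref{bornessup} by itself only gives uniform $O(1)$ relative boundedness, so the $O(\lambda^{\rho/2})$ gain should be attributed to the symbol expansion (Proposition~\ref{propsymboles}) rather than to~\ref{bornessup}; and in the final accounting you want $2h(\tau)\ge 2(1-\eta)f(\tau)^2$ (i.e.\ $h\ge(1-\eta)f^2$), not $h\ge 2(1-\eta)f^2$, which is false near $\tau=1$ — the spare factor comes from the coefficient $2$ in the principal term, exactly as the paper uses it.
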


Using the techniques of Jensen-Mourre-Perry \cite{JMP} and the properties of $ {\mathcal D} $ mentionned above, the last two propositions imply automatically the following theorem.

\begin{theo} \label{TheoremeJMPlambda} Let $ n \geq 2 $. 
Then  for each  $ k \in \Na $ there exists $ C > 0 $ such that for all $ \lambda \in  (0,\lambda_0 ] $
$$ \Big\| (A_{\mu}^{\lambda} - i)^{-k} (P_V/\lambda - 1 \pm i 0)^{-k} (A_{\mu}^{\lambda} + i)^{-k}  \Big\|  \leq C . $$
\end{theo}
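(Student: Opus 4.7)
This theorem is a direct application of the Jensen–Mourre–Perry (JMP) scheme~\cite{JMP} to the pair of self-adjoint operators $(P_V/\lambda, A_\mu^\lambda)$; the plan is to verify the abstract JMP hypotheses and check that every resulting constant is uniform in $\lambda \in (0,\lambda_0]$. The algebraic groundwork is already laid in the text: the dense subspace $\mathcal{D}$ is invariant under $U^\lambda(t) = e^{itA_\mu^\lambda}$, is contained in $D(P_V)$, and is dense in $D(P_V) \cap D(A_\mu^\lambda)$ for the graph norm, which allows all iterated commutators to be computed on $\mathcal{D}$ and the resulting identities extended by continuity. Proposition~\ref{bornessup} then says that all $\mbox{ad}_{iA_\mu^\lambda}^k(P_V/\lambda)$ extend to bounded operators from $D(P_V)$ to $L^2$ with $\lambda$-uniform norms, so $P_V/\lambda$ lies in $C^\infty(A_\mu^\lambda)$ with uniform seminorms, while Proposition~\ref{propMourreestimate} is the strict Mourre estimate at energy $1$, again uniform in $\lambda$.

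First I would treat $k=1$ by the classical Mourre differential inequality. Writing $z = 1 \pm i\varepsilon$ with $\varepsilon > 0$ small, the weighted resolvent
\begin{equation*}
F(z) = (A_\mu^\lambda - i)^{-1}(P_V/\lambda - z)^{-1}(A_\mu^\lambda + i)^{-1}
\end{equation*}
is split via $1 = f(P_V/\lambda) + (1-f)(P_V/\lambda)$. The $(1-f)$-piece is uniformly bounded by non-stationary functional calculus since $(1-f)(s)(s-z)^{-1}$ is uniformly bounded for $s \in \Ra$ and $z$ near $1$. On the $f$-piece, combining the Mourre estimate with the first-order commutator bound yields the standard Mourre differential inequality in $\varepsilon$, whose integration produces a uniform bound on $\|F(z)\|$.

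For $k \geq 2$, I would follow JMP by inducting on $k$ using the identity
\begin{equation*}
(P_V/\lambda - z)^{-k} = \frac{1}{(k-1)!}\partial_z^{k-1}(P_V/\lambda - z)^{-1}
\end{equation*}
together with the commutation rule
\begin{equation*}
[A_\mu^\lambda, (P_V/\lambda - z)^{-1}] = -i(P_V/\lambda - z)^{-1}[P_V/\lambda, iA_\mu^\lambda](P_V/\lambda - z)^{-1},
\end{equation*}
iterated so as to pull the weights $(A_\mu^\lambda \pm i)^{-k}$ across the resolvent powers. The error terms produced contain iterated commutators $\mbox{ad}_{iA_\mu^\lambda}^j(P_V/\lambda)$ with $j \leq k$, controlled by Proposition~\ref{bornessup}, times conjugated resolvents of strictly lower power, controlled by the induction hypothesis; both factors are $\lambda$-uniform.

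The main obstacle is keeping the uniformity in $\lambda$ throughout. This is not a genuine new difficulty: the JMP scheme uses only the Mourre estimate, the iterated commutator bounds, and general functional calculus, and both inputs are already $\lambda$-uniform, so the uniformity propagates automatically through the induction. A subsidiary technical point is the density of $\mathcal{D}$ in $D(P_V) \cap D(A_\mu^\lambda)$ at the level of commutator identities, which is dealt with by the approximation $u_\varepsilon = \chi(\varepsilon x)\psi(\varepsilon P_V)u$ already noted in the text.
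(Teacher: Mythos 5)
Your proposal is correct and follows essentially the same route as the paper: the paper itself gives no detailed proof, simply stating that Theorem~\ref{TheoremeJMPlambda} follows ``automatically'' from Propositions~\ref{bornessup} and~\ref{propMourreestimate} via the techniques of Jensen--Mourre--Perry together with the density and invariance properties of $\mathcal{D}$, which is precisely the scheme you spell out (verify the abstract JMP hypotheses, note that the Mourre estimate and iterated commutator bounds are $\lambda$-uniform, and let the uniformity propagate through the $k$-induction). The only slip is notational: the unitary group $U^\lambda(t)$ in the paper is $e^{itA^\lambda}$ on $L^2(\Omega,dx)$, while $A_\mu^\lambda$ generates $\mu^{-1/2}U^\lambda(t)\mu^{1/2}$ on $L^2(\Omega,\mu\,dx)$, but since multiplication by $\mu^{\pm 1/2}$ preserves $\mathcal{D}$ the conclusion you draw about invariance of $\mathcal{D}$ is unaffected.
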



The rest of the section is devoted to the proofs of Propositions~\ref{bornessup} and~\ref{propMourreestimate}.

We will use rescaled pseudo-differential operators. We denote by $ S^{m ,  \sigma} $  the space of symbols $a$ on $ \Ra^{2n} $ such that
$$  \big| \partial_x^{\alpha} \partial_{\xi}^{\beta} a (x,\xi) \big| \leq C_{\alpha \beta} \scal{\xi}^{m-|\beta|} \scal{x}^{\sigma - |\alpha|} . $$
For instance, the (full) symbol $ p $ of $P$ belongs to $ S^{2,0} $.  More precisely $ p (x,\xi) - |\xi|^2 $ is a sum of symbols in $ S^{2-j,-\rho-j} $ for $ j = 0,1 $. Everywhere below, we shall set
\begin{equation}
 \tau =  \ln (\lambda^{\frac{1}{2}})  \label{defdetau}
\end{equation}
so that $ e^{\pm i \tau A} \varphi (x) = \lambda^{\pm \frac{n}{4}} \varphi (\lambda^{\pm \frac{1}{2}}x) $. Then, by  extending the coefficients of $ P_V $  to $ \Ra^n$ if $ \Omega \ne \Ra^n $, one can write
\begin{equation}
 P_V / \lambda = e^{i \tau A} p_{\lambda} (x,D) e^{-i\tau A}  \label{scalingsymbol}
 \end{equation}
where 
\begin{equation}
 p_{\lambda} (x,\xi) = \sum_{j,k} g^{jk}(x/\lambda^{\frac{1}{2}}) \xi_j \xi_k - i \sum_k \lambda^{-\frac{1}{2}} b_k (x/\lambda^{\frac{1}{2}}) \xi_k +   \lambda^{-1} V (x/\lambda^{\frac{1}{2}}) ,  \label{expressionplambda}
\end{equation} 
with $ b_k  = \sum_j \mu^{-1} \partial_j (\mu g^{jk}) \in S^{-\rho-1}  $. Here $ p_{\lambda} $ belongs to $ S^{2,0} $, but {\it not uniformly with respect to} $ \lambda $. However,  for any $ \varrho = \varrho (x) \in C^{\infty}(\Ra^n) $ equal to $1$ near infinity and to $0$ near zero,  the family
 $ \big( \varrho  p_{\lambda} \big)_{\lambda \in (0,1]} $ belongs to a bounded subset of $ S^{2,0} $. This will allow to use pseudodifferential calculus. More precisely, we have the following elementary property  (already used in \cite{BoMi}):
 \begin{prop} \label{propsymboles} Let $ n \geq 2 $ and $ \varrho \in C^{\infty} (\Ra^n) $ be equal to $1$ near infinity and equal to $ 0 $ on a  large enough ball centered at $0$. If we set 
 $$ b_{\lambda} (x,\xi) :=   \lambda^{-\frac{\rho}{2}}\varrho (x) \big( p_{\lambda} (x,\xi) - |\xi|^2 \big) $$
  then $ ( b_{\lambda} )_{\lambda \in (0,1]} $ is a bounded family in $ S^{2,-\rho} $. In other words,
  $$ \varrho (x) p_{\lambda}  (x,\xi)= \varrho (x) |\xi|^2 + \lambda^{\frac{\rho}{2}} S^{2,-\rho} . $$
 \end{prop}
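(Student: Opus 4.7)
The plan is to decompose $p_\lambda(x,\xi) - |\xi|^2$ into its three contributions coming from $g^{jk}-\delta_{jk}$, $b_k$ and $V$, and to handle each one with a single scaling lemma. The key observation is that on the support of $\varrho$ one has $|x| \geq R$ for some $R > 0$, so that for all $\lambda \in (0,1]$ the rescaled variable $y := x/\lambda^{\frac{1}{2}}$ stays in the regime $|y| \geq R/\lambda^{1/2} \geq R$, a region where symbol bounds give the strongest decay and where $\langle y \rangle \simeq |x|/\lambda^{1/2}$ and $\langle x \rangle \simeq |x|$.

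First I would prove the scaling lemma: if $a \in S^{-\sigma}$ for some $\sigma \geq 0$, then the family
$$ \big( \varrho(x)\, a(x/\lambda^{\frac{1}{2}}) \big)_{\lambda \in (0,1]} $$
equals $\lambda^{\sigma/2}$ times a bounded family in $S^{0,-\sigma}$. This follows from $\partial_x^\alpha [a(x/\lambda^{1/2})] = \lambda^{-|\alpha|/2} (\partial^\alpha a)(x/\lambda^{1/2})$ combined with the elementary estimate
$$ \langle x/\lambda^{\frac{1}{2}} \rangle^{-\sigma-|\alpha|} \leq C \lambda^{(\sigma+|\alpha|)/2} \langle x \rangle^{-\sigma-|\alpha|}, \qquad x \in \mathrm{supp}\,\varrho,\ \lambda \in (0,1], $$
so that the two powers of $\lambda^{-|\alpha|/2}$ and $\lambda^{|\alpha|/2}$ cancel and leave a net factor $\lambda^{\sigma/2}$. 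The Leibniz rule, together with the fact that $\varrho \in S^{0,0}$ and that $\partial^\alpha \varrho$ is compactly supported for $|\alpha| \geq 1$, shows that derivatives falling on $\varrho$ produce only better decay in $x$.

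Applying this lemma term by term: the quadratic part $\sum_{j,k}(g^{jk}(x/\lambda^{1/2})-\delta_{jk})\xi_j\xi_k$ is in $\lambda^{\rho/2} \cdot S^{2,-\rho}$ directly; the first-order part $-i\lambda^{-1/2}\sum_k b_k(x/\lambda^{1/2})\xi_k$ gives, using $b_k \in S^{-\rho-1}$, a factor $\lambda^{-1/2}\cdot \lambda^{(\rho+1)/2} = \lambda^{\rho/2}$ times a bounded family in $S^{1,-\rho-1} \hookrightarrow S^{2,-\rho}$; finally, $\lambda^{-1} V(x/\lambda^{1/2})$ with $V \in S^{-2-\rho}$ produces $\lambda^{-1}\cdot \lambda^{(2+\rho)/2} = \lambda^{\rho/2}$ times a bounded family in $S^{0,-2-\rho} \hookrightarrow S^{2,-\rho}$. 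Summing yields $\varrho(x)(p_\lambda(x,\xi) - |\xi|^2) \in \lambda^{\rho/2} S^{2,-\rho}$ uniformly in $\lambda$.

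There is no serious obstacle here; the proof is essentially bookkeeping. The only thing to keep track of is that the negative powers $\lambda^{-1/2}$ and $\lambda^{-1}$ coming from the definition \eqref{expressionplambda} of $p_\lambda$ are exactly compensated by the scaling gains $\lambda^{(\rho+1)/2}$ and $\lambda^{(2+\rho)/2}$ provided by the extra decay at infinity of the subprincipal symbols $b_k$ and $V$, leaving the same common factor $\lambda^{\rho/2}$ as for the metric term. This is precisely where the strict inequalities $b_k \in S^{-\rho-1}$ and $V \in S^{-2-\rho}$ (rather than merely $S^{-\rho}$) are used.
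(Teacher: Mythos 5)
Your proof is correct and follows essentially the same route as the paper's: the key inequality $\langle x/\lambda^{1/2}\rangle^{-\sigma-|\alpha|}\leq C\lambda^{(\sigma+|\alpha|)/2}\langle x\rangle^{-\sigma-|\alpha|}$ on $\mathrm{supp}\,\varrho$ is exactly the estimate the paper uses (with $\sigma=\rho+j$), and the cancellation of $\lambda^{-|\alpha|/2}$ against $\lambda^{|\alpha|/2}$ together with the term-by-term bookkeeping over $j=0,1,2$ is the same argument, merely repackaged as a single scaling lemma.
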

 
 \noindent {\it Proof.} According to (\ref{expressionplambda})  it suffices to show that if $ b \in S^{-\rho-j}$, then $ \varrho (x) \lambda^{-\frac{\rho}{2}-\frac{j}{2}}b (x/\lambda^{\frac{1}{2}}) $ belongs to a bounded subset of $ S^{-\rho-j} $. Indeed, using that for  $ |x| \gtrsim 1 $ (as it is on the support of $ \varrho $), we have $ \scal{x} \sim |x| $ then we find
 $$  \big| \varrho (x) \lambda^{-\frac{\rho}{2}-\frac{j}{2}}b (x/\lambda^{\frac{1}{2}}) \big|  \leq C \lambda^{-\frac{\rho}{2} - \frac{j}{2}}  |x/\lambda^{\frac{1}{2}}|^{-\rho-j} = C |x|^{-\rho-j} \leq C^{\prime} \scal{x}^{-\rho-j} ,$$
 with constants independent of $ \lambda $. One proceeds similarly for derivatives. 
\finpreuve

In a similar fashion, keeping (\ref{defdetau}) in mind, one can write 
\begin{equation}
  A_{\mu}^{\lambda} = e^{i \tau A}  a_{\lambda} (x,D)  e^{- i \tau A} \label{pourlapreuve}
\end{equation}  
with
$$ a_{\lambda} (x,\xi) =  (1-\chi)(x) \left(  x \cdot \xi  + \frac{1}{2i} \big( \frac{x\cdot \nabla \mu}{\mu} \big)|_{\lambda^{-\frac{1}{2}}x} + \frac{n}{2i} \right) + \frac{1}{2i} x \cdot \nabla \chi (x) . $$
As in Proposition~\ref{propsymboles}, thanks to the support of $ (1-\chi) $, $ a_{\lambda} $ belongs to a bounded subset of $ S^{1,1} $ as long as $ \lambda \in (0,1] $. One can rewrite this as
\begin{equation}
a_{\lambda} (x,\xi) = (1-\chi)(x) \Big( x \cdot \xi + \frac{n}{2 i} \Big) + \lambda^{\frac{\rho}{2}} S^{-\rho}(\Ra^n_x) + C_0^{\infty} (\Ra_x^n \setminus 0) ,
\end{equation}
meaning that $ \lambda^{-\frac{\rho}{2}} (1-\chi) (x)(\lambda^{-\frac{1}{2}}x)\cdot (\nabla \mu ) (\lambda^{-\frac{1}{2}}x) /  \mu (\lambda^{-{\frac{1}{2}}}x)  $ belongs to a bounded subset of $ S^{-\rho} $.

To prove Proposition~\ref{bornessup}, we will use a parametrix of $ (P_V/\lambda - z)^{-1} $ in the region $ |x| \gtrsim \lambda^{-\frac{1}{2}} $ in term of rescaled pseudo-differential operators.

\begin{prop} \label{lemmparametrix} Let $ {\mathbb B} $ be a bounded subset of $ \Ca $,  $ N \in \Na $ and $ \varrho \in C^{\infty} (\Ra^n ) $ be equal to $1$ near infinity and equal to $0$ on ball centered at $ 0 $ and containing $ {\mathcal K} $. Then, for all $ z \in {\mathbb B} \setminus \Ra $,
$$ \varrho \big( \lambda^{\frac{1}{2}}x \big) \big( P_V/ \lambda - z \big)^{-1}  = e^{i \tau A} \Big( q_{\lambda,z}(x,D) \Big) e^{- i \tau A} - e^{i \tau A} \Big( r_{\lambda,z} (x,D) \Big) e^{- i \tau A} \big(P_V/\lambda -z \big)^{-N} $$
where $ q_{\lambda,z} \in S^{-2,0} $ and $ r_{\lambda,z} \in S^{-N,-N} $ satisfy uniform bounds in $ \lambda $. More precisely, for any seminorms $ {\mathcal N}_{-2,0} $ and $ {\mathcal N}_{-N,-N} $ of $ S^{-2,0} $ and $ S^{-N,-N} $ respectively, there exist $ C > 0 $ and $ M $ such that
$$ {\mathcal N}_{-2,0}  \big(q_{\lambda,z} \big) + {\mathcal N}_{-N,-N} (r_{\lambda,z}) \leq C  | {\rm Im}(z)|^{-M} 
$$
for all $ \lambda \in (0,1] $ and $ z \in {\mathbb B} \setminus \Ra  $.
\end{prop}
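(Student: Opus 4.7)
First I would reduce the statement to a symbolic construction in the rescaled frame. Using~(\ref{scalingsymbol}) together with the explicit dilation formula $e^{i\tau A}u(x) = \lambda^{n/4}u(\lambda^{1/2}x)$ (with $\tau = \ln \lambda^{1/2}$), one checks that $e^{i\tau A}\varrho(x)e^{-i\tau A} = \varrho(\lambda^{1/2}x)$, so the proposition is equivalent to constructing $q_{\lambda,z} \in S^{-2,0}$ and $r_{\lambda,z} \in S^{-N,-N}$ satisfying the claimed uniform seminorm bounds and
$$
\varrho(x)(p_\lambda(x,D) - z)^{-1} \ = \ q_{\lambda,z}(x,D) - r_{\lambda,z}(x,D)(p_\lambda(x,D) - z)^{-N}.
$$

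The main step is a left parametrix for $p_\lambda(x,D) - z$ on the support of $\varrho$. Pick a cutoff $\tilde\varrho \in C^\infty$ equal to $1$ on $\operatorname{supp}\varrho$ and supported inside the region where Proposition~\ref{propsymboles} applies, and take as initial symbol
$$
q^{(0)}_{\lambda,z}(x,\xi) \ := \ \frac{\tilde\varrho(x)}{p_\lambda(x,\xi) - z}.
$$
On $\operatorname{supp}\tilde\varrho$, Proposition~\ref{propsymboles} writes $p_\lambda = |\xi|^2 + \lambda^{\rho/2}b_\lambda$ with $(b_\lambda)$ bounded in $S^{2,-\rho}$; together with the uniform positive-definiteness of $(g^{jk})$ this yields $|p_\lambda - z| \gtrsim \scal{\xi}^2$ for $|\xi| \geq R(\mathbb{B})$ and $|p_\lambda - z| \gtrsim |{\rm Im}(z)|$ everywhere on $\operatorname{supp}\tilde\varrho$, provided $\lambda$ is small enough that the imaginary part of $p_\lambda$ is dominated by $|{\rm Im}(z)|/2$; the complementary compact range $\lambda \in [\lambda_1, 1]$ is handled by standard fixed-operator arguments. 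Hence $q^{(0)}_{\lambda,z}$ lies in $S^{-2,0}$ uniformly in $\lambda \in (0,1]$, with seminorms controlled by a power of $|{\rm Im}(z)|^{-1}$.

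Next I would iterate the parametrix in the classical manner: at step $k$ correct by $q^{(k+1)} := q^{(k)} - \tilde\varrho\, r^{(k)}/(p_\lambda - z)$, where $r^{(k)}$ is the current remainder. Applying the composition formula for pseudodifferential operators with bigraded symbols---which, thanks to Proposition~\ref{propsymboles}, gains one order in $\xi$ and one order of $x$-decay per correction---after $M'$ steps I obtain $q_{\lambda,z} \in S^{-2,0}$ and a remainder $s^{(M')}_{\lambda,z} \in S^{-M'-1,-M'-1}$, uniform in $\lambda$ with polynomial loss in $|{\rm Im}(z)|^{-1}$, satisfying
$$
q_{\lambda,z}(x,D)\bigl(p_\lambda(x,D) - z\bigr) = \tilde\varrho(x) + s^{(M')}_{\lambda,z}(x,D).
$$
Multiplying on the right by $(p_\lambda(x,D)-z)^{-1}$ and on the left by $\varrho(x)$, and using $\varrho\tilde\varrho = \varrho$, gives
$$
\varrho(x)(p_\lambda(x,D) - z)^{-1} = (\varrho\,q_{\lambda,z})(x,D) - \varrho(x)\,s^{(M')}_{\lambda,z}(x,D)(p_\lambda(x,D)-z)^{-1}.
$$
Finally, writing $(p_\lambda-z)^{-1} = (p_\lambda-z)^{N-1}(p_\lambda-z)^{-N}$ and identifying $r_{\lambda,z}$ with the symbol of $\varrho(x)\,s^{(M')}_{\lambda,z}(x,D)(p_\lambda(x,D)-z)^{N-1}$ via the composition formula, the order count shows that choosing $M' \geq 3N-3$ forces $r_{\lambda,z} \in S^{-N,-N}$ with the required seminorm bounds, while $\varrho\,q_{\lambda,z}$ still lies in $S^{-2,0}$ and plays the role of the final $q_{\lambda,z}$.

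The main obstacle is the \emph{uniformity in $\lambda$}: the full symbol $p_\lambda$ does \emph{not} belong to a bounded subset of $S^{2,0}$ as $\lambda \to 0^+$, since its first- and zeroth-order parts contain diverging factors $\lambda^{-1/2}$ and $\lambda^{-1}$. One must therefore confine every step of the symbolic calculus to the localization $\tilde\varrho$, where Proposition~\ref{propsymboles} converts these divergences into the harmless small factors $\lambda^{\rho/2}$. Tracking the powers of $|{\rm Im}(z)|^{-1}$ through all iterations and through each pseudodifferential composition is then purely bookkeeping and yields only the polynomial loss $|{\rm Im}(z)|^{-M}$ asserted.
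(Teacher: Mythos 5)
Your overall strategy matches the paper's: both construct a \emph{left} pseudo-differential parametrix for the rescaled operator $p_\lambda(x,D)-z$, localized away from the origin and the obstacle, and then convert it into a right approximate inverse for $P_V/\lambda - z$ by rescaling. The route differs in one organizational point. The paper first manufactures a globally defined, uniformly elliptic differential operator $\tilde P_\lambda$ on $\mathbb R^n$, with symbol bounded in $S^{2,0}$, satisfying $\tilde\varrho\, P_\lambda = \tilde\varrho\, \tilde P_\lambda$; it then runs a global parametrix construction for $\tilde P_\lambda - z$, and only afterwards restores the cutoffs, at the cost of the commutator term $[\tilde P_\lambda,\tilde\varrho]$ in the remainder. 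You skip the extension and bake the cutoff $\tilde\varrho$ directly into the approximate inverse from the very first step. Both are workable; the paper's detour buys a cleanly stated ``standard parametrix construction for a fixed uniformly elliptic operator'' whereas yours is more hands-on but must track the support and the $\lambda$-uniformity at every iteration.

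There is, however, a genuine gap in your step establishing that $q^{(0)}_{\lambda,z}\in S^{-2,0}$. You assert $|p_\lambda-z|\gtrsim |\mathrm{Im}(z)|$ on $\operatorname{supp}\tilde\varrho$, ``provided $\lambda$ is small enough that the imaginary part of $p_\lambda$ is dominated by $|\mathrm{Im}(z)|/2$,'' treating the complementary $\lambda$-range as a compact set. This does not give the required uniformity: the threshold on $\lambda$ would depend on $|\mathrm{Im}(z)|$, which can be arbitrarily small for $z\in\mathbb B\setminus\mathbb R$, so there is no $\lambda_1>0$ that works simultaneously for all $z$. Worse, the symbol $p_\lambda-z$ can genuinely vanish on $\operatorname{supp}\tilde\varrho$: the imaginary part $\mathrm{Im}\,p_\lambda = -\lambda^{-1/2}\sum_k b_k(x/\lambda^{1/2})\xi_k$ is linear in $\xi$, changes sign, and is $O(\lambda^{\rho/2}\scal{x}^{-\rho-1}|\xi|)$, which for suitable $(x,\xi)$ with $|\xi|\lesssim 1$ can equal $\mathrm{Im}(z)$ while simultaneously $\mathrm{Re}\,p_\lambda = \mathrm{Re}(z)$. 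The standard remedy — and what ``standard parametrix construction'' in the paper implicitly relies on — is to never divide by the full complex symbol. Instead invert only a real, uniformly elliptic reference symbol (e.g.\ $|\xi|^2 - z$ or $\sum g^{jk}\xi_j\xi_k - z$), which is real-minus-$z$ and hence bounded below by $\max(|\mathrm{Im}(z)|, c\scal{\xi}^2 - C_{\mathbb B})$, and absorb the lower-order (complex) part of $p_\lambda$ iteratively into the remainder. Each iteration then contributes a controllable factor of $|\mathrm{Im}(z)|^{-1}$, which is exactly the source of the polynomial loss $|\mathrm{Im}(z)|^{-M}$ in the statement. With this correction, the rest of your order-counting (in particular $M' \geq 3N-3$ to absorb the $(p_\lambda - z)^{N-1}$ factor) goes through.
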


\noindent {\bf Remark.} It follows from the proof that the Schwartz kernels of the rescaled pseudo-differential operators 
$$ e^{i \tau A} \Big( q_{\lambda,z}(x,D) \Big) e^{- i \tau A}  \qquad \mbox{and} \qquad e^{i \tau A} \Big( r_{\lambda,z} (x,D) \Big) e^{- i \tau A} $$ are contained in the support of $ \varrho (\lambda^{\frac{1}{2}}x) \tilde{\varrho} (\lambda^{\frac{1}{2}}y)$ for any $ \tilde{\varrho} $ equal to 1 near the support of $ \varrho $ and equal to $ 0 $ near zero and $ {\mathcal K} $. Thus their kernels are supported in $ |x| \gtrsim \lambda^{-\frac{1}{2}}  $ and $ |y| \gtrsim \lambda^{-\frac{1}{2}} $ so, in particular, the composition of $ e^{i\tau A} r_{\lambda}(x,D) e^{-i\tau A} $  with $ (P_V/\lambda - z)^{-N} $ makes perfectly sense. More generally, Proposition~\ref{lemmparametrix} rests only on the form of $P_V$ far away at infinity and is insensitive to the form of this operator in a compact set.

\noindent {\bf Proof of Proposition~\ref{lemmparametrix}.} Let $ \tilde{\varrho} $ be a smooth function equal to $1$ near the support of $ \varrho $ and equal to $0$ near zero and $ {\mathcal K} $. Let us consider (\ref{scalingsymbol}) and denote for symplicity $ P_{\lambda} = p_{\lambda} (x,D) $. We can then find an uniformly elliptic differential operator $ \tilde{P}_{\lambda} $ with symbol bounded in $ S^{2,0}  $ as $ \lambda \in (0,1] $ and such that $ \tilde{\varrho} P_{\lambda} = \tilde{\varrho} \tilde{P}_{\lambda} $. By standard parametrix construction, one can find $ \tilde{b}_{\lambda} \in S^{-2,0} $ and $ \tilde{r}_{\lambda} \in S^{-2N,-2N} $, both bounded with respect to $ \lambda \in (0,1] $ and with seminorms growing polynomially in $ 1 / |{\rm Im}(z)| $, such that
$$ \tilde{b}_{\lambda,z} (x,D) \big(\tilde{P}_{\lambda} - z \big) = 1 + \tilde{r}_{\lambda,z} (x,D) . $$
Then
$$ \underbrace{\varrho (x) \tilde{b}_{\lambda} (x,D) \tilde{\varrho} (x)}_{=: b_{\lambda,z}(x,D)} (P_{\lambda} - z) = \varrho (x) + r_{\lambda,z}(x,D) (P_{\lambda}-z)^{1-N}$$
with
$$  r_{\lambda,z}(x,D)  = \Big( \varrho(x)\tilde{r}_{\lambda}(x,D) \tilde{\varrho} (x)  - \varrho (x) \tilde{b}_{\lambda}(x,D) \big[ \tilde{P}_{\lambda} , \tilde{\varrho}(x) \big]\Big) (P_{\lambda}-z)^{N-1}$$
and by rescaling we get
$$  \Big(e^{i\tau A} b_{\lambda} (x,D) e^{-i\tau A} \Big) (P_V/\lambda - z) = \varrho (\lambda^{\frac{1}{2}}x) +  \Big(e^{i\tau A} r_{\lambda} (x,D) e^{-i\tau A} \Big) (P_V/ \lambda - z)^{1-N} $$
from which the result follows by applying $ (P_V /\lambda - z )^{-1} $ to this identity.  \finpreuve

\noindent {\bf Proof of Proposition~\ref{bornessup}.} Using (\ref{scalingsymbol}), (\ref{pourlapreuve}), in particular  that the coefficients of $ A_{\mu}^{\lambda} $ are supported in $ |x| \gtrsim \lambda^{-1/2} $,
 we see that 
$$  [P_V/\lambda,  i A_{\mu}^{\lambda}]  = e^{i \tau A} \Big(  i \big[ p_{\lambda} (x,D) , a_{\lambda} (x,D) \big] \Big) e^{-i\tau A} = e^{i \tau A} \Big( p_{\lambda}^{(1)} (x,D) \Big) e^{-i\tau A} $$
for some bounded family $ (p_{\lambda}^{(1)})_{\lambda \in (0,1]} $ of $ S^{2,0} $. This uses that, for the (pseudo)differential calculus in classes $ S^{m,\sigma} $, the commutator of operators with symbols in $ S^{m_1,\sigma_1} $ and $ S^{m_2,\sigma_2} $ has a symbol in $ S^{m_1+m_2-1,\sigma_1 + \sigma_2 - 1} $.
By iteration, we find that for each $ k \geq 1 $,
$$   \mbox{ad}_{i A_{\mu}^{\lambda}}^k \big( P_V/ \lambda \big) = e^{i \tau A} \Big(  p_{\lambda}^{(k)} (x,D) \Big)  e^{-i\tau A} $$
where $ (p_{\lambda}^{(k)})_{\lambda \in (0,1]} $ is a bounded family  of $ S^{2,0} $. Thanks to the support of coefficients of $ A_{\mu}^{\lambda} $,  we can write
$$   \mbox{ad}_{i A_{\mu}^{\lambda}}^k \big( P_V/ \lambda \big) =   \mbox{ad}_{i A_{\mu}^{\lambda}}^k \big( P_V / \lambda \big)  \varrho (\lambda^{\frac{1}{2}}x)$$
for some $ \varrho \in C^{\infty}(\Ra^n) $ supported away from zero and $ {\mathcal K} $, and equal to $1$ near the support of $ 1 - \chi $. We can then use Proposition~\ref{lemmparametrix}  (with $ z = i$) to see that
$$  \mbox{ad}_{i A_{\mu}^{\lambda}}^k \big( P_V / \lambda \big)  \varrho (\lambda^{\frac{1}{2}}x) (P_V/\lambda + i)^{-1}  $$
is bounded on $L^2$, uniformly in $ \lambda $, thanks to the Calder\'on-Vaillancourt Theorem  and the uniform boundedness on $ L^2 $ of $ (e^{i t A})_{t \in \Ra} $. The result follows.  \finpreuve

 
 
%

\noindent {\bf Proof of Proposition~\ref{propMourreestimate}.} We start by observing that, by Proposition~\ref{propsymboles}, 
\begin{equation*}
\big[ P_V / \lambda , i A_{\mu}^{\lambda}\big]  =  2 (1-\chi)(\lambda^{\frac{1}{2}}x)  (-\Delta/\lambda) + R_{\lambda} 
\end{equation*}
with
$$ R_{\lambda} = \scal{\lambda^{\frac{1}{2}}x }^{-N} e^{i \tau A} \Big( c_{\lambda} (x,D) \Big)  e^{-i \tau A}  \tilde{\varrho} (\lambda^{\frac{1}{2}}x)  +  \lambda^{\frac{\rho}{2}}  e^{i \tau A} \Big(  d_{\lambda} (x,D) \Big) e^{-i \tau A}  \tilde{\varrho} (\lambda^{\frac{1}{2}}x)  $$
 with $ c_{\lambda} \in S^{2,0} $ and $d _{\lambda} \in S^{2,-\rho} $, both with uniform bounds in $ \lambda $, and with $ \tilde{\rho} $ equal to $0$ near zero. Here $ N $ is arbitrary; actually the first term  of $ R_{\lambda} $ is compactly supported and localized in a region where $ |x| \sim \lambda^{-1/2} $ but we record only this polynomial decay which is sufficient. Overall, using (\ref{scalingsymbol}) and Proposition~\ref{propsymboles}, we obtain
 
 \begin{equation*}
\big[ P_V / \lambda , i A_{\mu}^{\lambda}\big] =  2 (1-\chi)(\lambda^{\frac{1}{2}}x)  (P_V/\lambda) +  \scal{\lambda^{\frac{1}{2}}x}^{-\rho} e^{i \tau A} \Big( e_{\lambda} (x,D) \Big) e^{-i\tau A} \varrho (\lambda^{\frac{1}{2}}x) 
\end{equation*}
for some bounded family $ ( e_{\lambda})_{\lambda \in (0,1]} $ of $ S^{2,0} $ and $ \varrho $ equal to $0$ near zero. It follows from Proposition~\ref{lemmparametrix} that the operator  $ e^{i \tau A} \Big( e_{\lambda} (x,D) \Big) e^{-i\tau A} \varrho (\lambda^{\frac{1}{2}}x) (P_V/\lambda + i)^{-1} $ is bounded uniformly in $ \lambda $.  Thus, if $ f_1  $ belongs to $ C_0^{\infty} (\Ra) $, we obtain
\begin{equation}
 \Big\| f_1 (P_V/\lambda) \Big(  \big[ P_V / \lambda , i A_{\mu}^{\lambda}\big] - 2 P_V /\lambda \Big) f_1 (P_V /\lambda) \Big\| \leq C \Big\| f_1 (P_V /\lambda) \scal{\lambda^{\frac{1}{2}}x}^{-\rho} \Big\| ,  \label{presqueMourre}
\end{equation}
with a constant $C$  independent of $ \lambda $ and of $f_1$ as long as the support of $ f_1 $ is contained in  a fixed compact set, say in $ [-2,2] $, and as long as $ ||f_1||_{\infty} $ is bounded (say by $1$). By Assumption~\ref{conditioncruciale}, the right hand side of (\ref{presqueMourre}) can be made as small as we wish, say less than~$ \frac{1}{4} $, provided we shrink the support of $f_1$ around $1$ and take $ \lambda $ small enough. We obtain
\begin{equation}
  f_1 (P_V/\lambda)  \big[ P_V / \lambda , i A_{\mu}^{\lambda}\big]  f_1 (P_V/\lambda)  \geq 2  f_1 (P_V/\lambda) (P_V/\lambda)  f_1 (P_V/\lambda) - \frac{1}{4} .  \label{compositionMourreexact}
\end{equation}
We may further assume that $ f_1 $ is equal to $1$ near $ 1$.  If the support of $f_1$ is small enough around $1$, we also have $ 2  f_1 (P_V/\lambda) (P_V/\lambda)  f_1 (P_V/\lambda)  \geq \frac{3}{2} f_1^2 (P_V/\lambda) $. Thus, after composition by $f (P_V/\lambda) $ with $f$ supported close to $1$, (\ref{compositionMourreexact}) yields (\ref{inegalitecommutateurpositif}). \finpreuve

\section{The uncertainty region} \label{uncertainty}
\setcounter{equation}{0}
We now come back to the case where $V=0$. The purpose of this section is to prove the following result.

\begin{prop} \label{compactness} Assume that the operator satisfies Assumption~\ref{asum.1.1}.
Then, for all $ \sigma > 0 $ and  $ \epsilon > 0 $, there exist $ \lambda_0 > 0 $ and $ f \in C_0^{\infty} ((0,\infty), [0,1] ) $ such that $ f \equiv 1 $ near $1$ and
$$   \Big\| \scal{\lambda^{\frac{1}{2}}x}^{-\sigma} f (P/\lambda) \Big\| \leq \epsilon, \qquad 0 < \lambda \leq \lambda_0 .$$  In other words, Assumption~\ref{asum.1.1} (and $V=0$) implies Assumption~\ref{conditioncruciale}
\end{prop}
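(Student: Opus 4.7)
The approach is a compactness-and-contradiction argument based on spectral localization and strong resolvent convergence (after rescaling) of the operator to the flat Laplacian $-\Delta$. Suppose the conclusion fails; then there exist $\sigma,\epsilon_0>0$ such that, choosing for each $k$ a function $f_k\in C_0^\infty((0,\infty),[0,1])$ with $f_k\equiv 1$ near $1$ and $\mathrm{supp}(f_k)\subset(1-1/k,1+1/k)$, the negated hypothesis supplies $\lambda_k\in(0,1/k]$ and $u_k\in L^2(\Omega)$ with $\|u_k\|=1$ and
\[
 \bigl\|\scal{\lambda_k^{1/2}x}^{-\sigma}f_k(P/\lambda_k)u_k\bigr\|_{L^2(\Omega)}>\epsilon_0/2.
\]
Conjugating by the unitary dilation $U_\lambda u(y):=\lambda^{-n/4}u(\lambda^{-1/2}y)$ and setting $\tilde u_k:=U_{\lambda_k}u_k$, $\tilde v_k:=f_k(\tilde P_{\lambda_k})\tilde u_k$, where $\tilde P_{\lambda_k}:=U_{\lambda_k}(P/\lambda_k)U_{\lambda_k}^{-1}$ on $L^2(\lambda_k^{1/2}\Omega)$ has coefficients $g^{jk}(\lambda_k^{-1/2}y),\mu(\lambda_k^{-1/2}y)$ converging (by (\ref{conditionsdesymboles})) pointwise to $\delta^{jk}$ and $1$ for every $y\ne 0$, the assumption becomes $\|\scal{y}^{-\sigma}\tilde v_k\|_{L^2}>\epsilon_0/2$.

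The Nash inequality (\ref{pourNash}) applied to $v_k:=f_k(P/\lambda_k)u_k$, together with functional calculus, gives $\|\nabla v_k\|_{L^2(\Omega)}\leq C\|\sqrt{P}\,v_k\|_{L^2}\leq C'\sqrt{\lambda_k}$; the scaling identity $\|\nabla\tilde v_k\|_{L^2}=\lambda_k^{-1/2}\|\nabla v_k\|_{L^2(\Omega)}$ then produces the uniform bound $\|\nabla\tilde v_k\|_{L^2}\leq C'$. Since $\tilde v_k\in D(\tilde P_{\lambda_k})\subset H_0^1(\lambda_k^{1/2}\Omega)$, extension by zero across the shrinking obstacle $\lambda_k^{1/2}\mathcal K$ embeds $(\tilde v_k)$ as a uniformly bounded sequence in $H^1(\Ra^n)$; passing to a subsequence, $\tilde v_k\rightharpoonup\tilde v$ weakly in $H^1$ and strongly in $L^2_{\mathrm{loc}}(\Ra^n)$ by Rellich--Kondrachov.

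The heart of the argument is that $\tilde v=0$. For any $g\in C_0^\infty((0,\infty))$ vanishing on a neighborhood of $1$, one has $gf_k\equiv 0$ for $k$ large (since $\mathrm{supp}(f_k)$ eventually lies in that neighborhood), so $g(\tilde P_{\lambda_k})\tilde v_k=(gf_k)(\tilde P_{\lambda_k})\tilde u_k=0$. On the other hand, $\tilde P_{\lambda_k}\to-\Delta$ in the strong resolvent sense on $L^2(\Ra^n)$: the space $C_0^\infty(\Ra^n\setminus\{0\})$ is a common core (its elements lie in $D(\tilde P_{\lambda_k})$ once $\lambda_k^{1/2}\mathcal K$ is in the complement of their support), on which the coefficient convergence forces $\tilde P_{\lambda_k}u\to-\Delta u$ in $L^2$. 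Hence $g(\tilde P_{\lambda_k})\to g(-\Delta)$ strongly, and passing to the weak limit in $0=\langle\varphi,g(\tilde P_{\lambda_k})\tilde v_k\rangle$ yields $g(-\Delta)\tilde v=0$ for every such $g$. The spectral support of $\tilde v$ under $-\Delta$ is therefore contained in $\{1\}$; since $-\Delta$ on $\Ra^n$ has no eigenvalue, $\tilde v=0$.

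Finally, splitting
\[
 \|\scal{y}^{-\sigma}\tilde v_k\|_{L^2}^2\leq\|\tilde v_k\|_{L^2(|y|\leq R)}^2+\scal{R}^{-2\sigma}\|\tilde v_k\|_{L^2}^2
\]
and choosing first $R$ large (to control the tail) and then $k$ large (so that $\tilde v_k\to 0$ in $L^2(|y|\leq R)$) forces $\|\scal{y}^{-\sigma}\tilde v_k\|_{L^2}\to 0$, contradicting the lower bound $>\epsilon_0/2$. The principal technical point to carry out is the strong resolvent convergence $\tilde P_{\lambda_k}\to-\Delta$ when the Dirichlet obstacle is present; this is a classical domain-perturbation result that rests on $\lambda_k^{1/2}\mathcal K$ collapsing to the single point $\{0\}$ as $k\to\infty$.
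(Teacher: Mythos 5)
Your approach is genuinely different from the paper's: you use a soft compactness-and-contradiction argument driven by rescaled weak limits, whereas the paper gives a quantitative decomposition (Proposition~\ref{propfacile}) reducing to a free-Laplacian term (Lemma~\ref{lemmeexplicit}) plus a difference $D_f(\lambda)$ shown to vanish with an explicit rate ($\lambda^\delta$ for $n\ge 3$, $1/|\log\lambda|$ for $n=2$). The preliminary steps in your argument are fine: the Nash/ellipticity bound $\|\nabla v_k\|\lesssim\sqrt{\lambda_k}$, the scaling giving a uniform $H^1$ bound on $\tilde v_k$, the extraction of a weak $H^1$ / strong $L^2_{\mathrm{loc}}$ limit, and the tail-splitting at the end are all correct.

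The gap is exactly where you flag it, but the diagnosis is wrong. You justify the strong resolvent convergence $\tilde P_{\lambda_k}\to -\Delta$ by asserting that $C_0^\infty(\Ra^n\setminus\{0\})$ is a \emph{common core}. For this criterion to apply, $C_0^\infty(\Ra^n\setminus\{0\})$ must be an operator core for the limiting operator $-\Delta$ on $L^2(\Ra^n)$, i.e.\ $-\Delta|_{C_0^\infty(\Ra^n\setminus\{0\})}$ must be essentially self-adjoint. This is \emph{false} for $n=2,3$ (the deficiency indices are $(1,1)$ -- this is precisely the point-interaction phenomenon), which are the dimensions the proposition must cover. So the strong convergence $g(\tilde P_{\lambda_k})\to g(-\Delta)$, on which the whole identification $\tilde v=0$ rests, is not established by your argument in low dimensions.

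The correct tool is convergence of the associated quadratic forms (Mosco/$\Gamma$-convergence), for which $C_0^\infty(\Ra^n\setminus\{0\})$ is a \emph{form} core (dense in $H^1(\Ra^n)$) for all $n\ge 2$, since a point has zero $H^1$-capacity. But producing the recovery sequences needed for the ``$\limsup$'' half of Mosco convergence in $n=2$ is nontrivial: one needs cutoffs vanishing near the shrinking obstacle whose Dirichlet energy tends to $0$, and these are exactly the logarithmic cutoffs the paper constructs in Lemma~\ref{cut}. Said otherwise, your ``classical domain-perturbation result'' at the end is the whole difficulty of the two-dimensional case, and making it precise essentially forces you back to the paper's computation. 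Your soft argument does trade the explicit rates for a compactness step, but it does not remove the hard 2d analysis; it only relocates it.
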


One of the key ingredients  is the following Nash inequality (see \cite[p. 936]{Nash})
\begin{equation}
|| \varphi ||_{L^2}^{1 + \frac{2}{n}} \leq C_n || \varphi ||_{L^1}^{\frac{2}{n}} || \nabla \varphi ||_{L^2} , \qquad  \varphi \in C_0^{\infty} (\Ra^n) .
\end{equation} 
  By (\ref{signedeV}) and (\ref{pourNash}),  we obtain
\begin{equation}
|| \varphi ||_{L^2}^{1 + \frac{2}{n}} \leq C || \varphi ||_{L^1}^{\frac{2}{n}}  \Big\| P^{\frac{1}{2}} \varphi  \Big\|_{L^2}, \qquad \varphi \in D (P^{\frac{1}{2}}) . \label{Nash2}
\end{equation}
The inequality (\ref{Nash2}) then implies the following  heat flow estimates for any $ p \in [1,2] $, 
$$   || e^{- t P} ||_{L^p \rightarrow L^2} \leq C t^{\frac{n}{2} \big( \frac{1}{p} - \frac{1}{2} \big)}, \qquad t > 0 . $$
We refer to \cite{Nash,Nash1,Coulhon} for proofs of such estimates; we only recall here that they follow from (\ref{Nash2}) and the fact that $ e^{-tP} $ is uniformly bounded in $t$ as an operator on $ L^1 (\Omega) $ since it is positivity preserving (by the maximum principle) and integral preserving (by integration by part) since there is no potential term in $ P $.

    We will use heat flow estimates through the following elementary lemma.

\begin{lemm} \label{lemmheatHardy} Let $ n \geq 2 $.  Then, for each $ s \in [0, \frac{n}{4}] $,  $ \sigma > 2 s $ and $ \kappa > s $, there is $C>0$ such that for $ \lambda > 0   $,
 $$ \Big\| \big( P/\lambda + 1 \big)^{-\kappa}  \langle x \rangle^{-\sigma}  \Big\|_{L^2 \rightarrow L^2} \leq C \lambda^{s} . $$
 \end{lemm}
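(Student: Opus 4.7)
The plan is to reduce the estimate to a time-integrated bound on the heat semigroup via the subordination identity
\begin{equation*}
(P/\lambda + 1)^{-\kappa} = \frac{1}{\Gamma(\kappa)} \int_0^\infty t^{\kappa - 1} e^{-t}\, e^{-tP/\lambda}\, dt,
\end{equation*}
so that it is enough to estimate $\|e^{-tP/\lambda} \langle x \rangle^{-\sigma}\|_{L^2 \to L^2}$ and then integrate against the Gamma kernel.

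I will combine two bounds on this quantity. The first is trivial: $\|e^{-tP/\lambda} \langle x \rangle^{-\sigma}\|_{L^2 \to L^2} \leq 1$, from functional calculus together with $|\langle x \rangle^{-\sigma}| \leq 1$. The second is obtained by factoring through an intermediate $L^p$ space with $p \in [1,2]$: by H\"older, $\langle x \rangle^{-\sigma}$ maps $L^2 \to L^p$ boundedly provided $\sigma > n(1/p - 1/2)$, while the Nash-type heat semigroup bound, rescaled, gives $\|e^{-tP/\lambda}\|_{L^p \to L^2} \leq C (t/\lambda)^{-\alpha}$ with $\alpha := (n/2)(1/p - 1/2) \in [0, n/4]$. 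The H\"older admissibility condition reads exactly $\sigma > 2\alpha$, so the assumption $\sigma > 2s$ (together with $s \leq n/4$) lets me choose $p$ so that $\alpha = s$, yielding
\begin{equation*}
\|e^{-tP/\lambda} \langle x \rangle^{-\sigma}\|_{L^2 \to L^2} \leq C \min\bigl\{1,\, (\lambda/t)^s\bigr\}.
\end{equation*}

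Inserting this into the subordination integral reduces everything to estimating $\int_0^\infty t^{\kappa-1} e^{-t} \min\{1,(\lambda/t)^s\}\, dt$. For $\lambda \geq 1$ the bound is trivial since the operator norm is at most $1 \leq \lambda^s$. For $\lambda \leq 1$, I split at $t = \lambda$: the contribution from $t \leq \lambda$ is bounded by $\int_0^\lambda t^{\kappa-1}\, dt = \lambda^\kappa/\kappa \leq C \lambda^s$ (using $\kappa > s$ and $\lambda \leq 1$), while the contribution from $t \geq \lambda$ is at most $\lambda^s \int_\lambda^\infty t^{\kappa-1-s} e^{-t}\, dt \leq \lambda^s\,\Gamma(\kappa - s)$, finite precisely because $\kappa > s$.

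The only subtlety is that both hypotheses are used right at the boundary: $\sigma > 2s$ is exactly what is needed to take the smoothing exponent $\alpha$ equal to $s$ in the H\"older--Nash step, and $\kappa > s$ is exactly what is needed for the tail integral in $t$ to converge. There is no slack in either direction, which is consistent with the formulation of the lemma.
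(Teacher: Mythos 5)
Your proof is correct and follows essentially the same route as the paper: subordination via the Gamma integral, the Nash heat-flow bound $\|e^{-tP/\lambda}\|_{L^p\to L^2}\lesssim (\lambda/t)^s$ with $p=\frac{2n}{4s+n}$, and H\"older for $\langle x\rangle^{-\sigma}:L^2\to L^p$. The only cosmetic difference is that the paper establishes $\|(P/\lambda+1)^{-\kappa}\|_{L^p\to L^2}\lesssim\lambda^s$ in one stroke (the hypothesis $\kappa>s$ already makes $\int_0^\infty t^{\kappa-1-s}e^{-t}\,dt$ converge, so no splitting at $t=\lambda$ and no $\min\{1,(\lambda/t)^s\}$ is needed) and only then composes with the weight, whereas you combine both bounds inside the integral and split; both are fine.
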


\noindent {\it Proof.} Let $ p := \frac{2n}{4 s + n} \in [1,2] $ and $ \frac{1}{r} := \frac{1}{p} - \frac{1}{2} = \frac{2 s}{n} \in [ 0 , \frac{1}{2} ]  $.  By heat flow estimates
\begin{equation}
 || e^{- t P / \lambda} ||_{L^{p} \rightarrow L^2} \lesssim (\lambda / t )^{\frac{n}{2} \left( \frac{1}{p} - \frac{1}{2} \right)} =  (\lambda / t )^s  . \label{conditiondintegrabilite2}
\end{equation}
Then, using that
\begin{equation}
 \big( P /\lambda + 1 \big)^{-\kappa}  = \frac{1}{\Gamma (\kappa)} \int_0^{\infty} e^{-t(P/\lambda + 1)}   t^{\kappa-1}dt  \label{fonctionGamma}
\end{equation}
together with (\ref{conditiondintegrabilite2})
and the integrability of $ e^{-t} t^{ \kappa-s - 1} $, we get the estimate
\begin{equation}
  \Big\|  ( P/\lambda + 1)^{-\kappa}  \Big\|_{L^{p} \rightarrow L^2} \leq C \lambda^{s} .  \label{estutile}
\end{equation}
  The result then follows from the estimate
$$  || \langle x \rangle^{-\sigma} \varphi ||_{L^{p}}   \lesssim || \varphi ||_{L^2}$$
 since $ \scal{x}^{-\sigma} \in L^r $ and $ L^r \cdot L^2 \subset L^p $ by H\"older's inequality. \finpreuve

In what follows, we select a function  $ \chi \in  C_0^{\infty} (\Ra^n) $ (also viewed as a  function on $ \Omega $), equal to $1$ near $ {\mathcal K} \cup\{0\} $ and set
$$ \chi_{\lambda} (x) = \chi \big(\lambda^{\frac{1}{4}}  x \big) . $$
Notice that we need to cut off away from the obstacle, but the precise choice of the power $ \frac{1}{4} $ is not essential. For convenience, we also assume that $ \chi $ is real valued and that $ 0 \leq \chi \leq 1 $. We next set, for any given $ f \in C_0^{\infty} (\Ra) $,
$$ D_f (\lambda) = (1-\chi_{\lambda}) f (P/\lambda) (1-\chi_{\lambda}) - (1-\chi_{\lambda}) f (-\Delta/\lambda) (1-\chi_{\lambda})$$
which is well defined both as an operator on $ L^2 (\Ra^n) $ and on $ L^2 (\Omega) $ since $ 1- \chi_{\lambda} $ vanishes near the obstacle for $ \lambda \ll 1 $. In other words,  we slightly abuse notations and identify for $ \varphi \in L^2( \Omega)$ (resp. $ \varphi \in L^2( \mathbb{R}^n)$) $(1-\chi_\lambda)(x) \varphi $ with a function in $L^2( \mathbb{R}^n)$ (resp. a function in $L^2( \Omega)$).

\begin{prop} \label{propfacile} Let $ f \in C_0^{\infty} (\Ra) $. Then there exists $ C_f $ such that,
\begin{equation}
 \Big\| \scal{\lambda^{\frac{1}{2}}x}^{-\sigma} f (P/\lambda) \Big\| \leq C_f \lambda^{\frac{n}{8}} + \Big\| \scal{\lambda^{\frac{1}{2}}x}^{-\sigma} f (-\Delta /\lambda) \Big\| + \Big\| D_f (\lambda)  \Big\| . \label{pourdecompostionfinale}
\end{equation}
 \end{prop}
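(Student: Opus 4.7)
The plan is to split $f(P/\lambda)$ using the compact cutoff $\chi_\lambda$ and reduce everything to a single heat-flow estimate. Concretely, I would write
\begin{equation*}
f(P/\lambda) = \chi_\lambda f(P/\lambda) + (1-\chi_\lambda) f(P/\lambda) \chi_\lambda + (1-\chi_\lambda) f(P/\lambda)(1-\chi_\lambda),
\end{equation*}
and use the definition of $D_f(\lambda)$ to replace the third piece by $D_f(\lambda) + (1-\chi_\lambda) f(-\Delta/\lambda)(1-\chi_\lambda)$. Multiplying on the left by $\scal{\lambda^{1/2}x}^{-\sigma}$ and invoking the trivial pointwise bounds $\scal{\lambda^{1/2}x}^{-\sigma} \leq 1$ and $|1-\chi_\lambda| \leq 1$, the proposition reduces to the single core estimate
\begin{equation*}
\big\| f(P/\lambda) \chi_\lambda \big\|_{L^2 \to L^2} \leq C_f \lambda^{n/8}, \qquad \lambda \in (0,1].
\end{equation*}
Indeed, the first summand $\scal{\lambda^{1/2}x}^{-\sigma}\chi_\lambda f(P/\lambda)$ is handled by passing to its adjoint (which turns $f$ into $\bar f$, a bound of the same shape), and the cross term is trivially dominated by $\|f(P/\lambda)\chi_\lambda\|$; the third piece contributes exactly $\|D_f(\lambda)\|$ and $\|\scal{\lambda^{1/2}x}^{-\sigma} f(-\Delta/\lambda)\|$ since $|1-\chi_\lambda|\leq 1$ on both sides.

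For the core estimate, my plan is to combine the heat-flow bound $\|e^{-tP/\lambda}\|_{L^1\to L^2} \leq C(\lambda/t)^{n/4}$ -- itself a consequence of the Nash inequality (\ref{Nash2}) as used in Lemma~\ref{lemmheatHardy} -- with the scaling identity $\|\chi_\lambda\|_{L^2} = \lambda^{-n/8}\|\chi\|_{L^2}$. Fix $\kappa > n/4$ and write $f(P/\lambda) = g(P/\lambda)(P/\lambda+1)^{-\kappa}$ with $g(s) := f(s)(1+s)^{\kappa}$ bounded on $[0,\infty)$; by the functional calculus, the problem reduces to bounding $\|(P/\lambda+1)^{-\kappa}\chi_\lambda\|_{L^2\to L^2}$. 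Expanding through the Gamma-function representation
\begin{equation*}
(P/\lambda+1)^{-\kappa} = \frac{1}{\Gamma(\kappa)} \int_0^\infty e^{-t}\, e^{-tP/\lambda}\, t^{\kappa-1}\,dt,
\end{equation*}
and using Cauchy--Schwarz to get $\|\chi_\lambda u\|_{L^1} \leq \|\chi_\lambda\|_{L^2}\|u\|_{L^2}$,
\begin{equation*}
\big\| e^{-tP/\lambda}\chi_\lambda \big\|_{L^2\to L^2} \leq \|e^{-tP/\lambda}\|_{L^1\to L^2}\,\|\chi_\lambda\|_{L^2} \leq C\, t^{-n/4}\lambda^{n/8}.
\end{equation*}
The remaining integral $\int_0^\infty e^{-t} t^{\kappa-1-n/4}\,dt$ converges precisely because $\kappa > n/4$, giving $\|(P/\lambda+1)^{-\kappa}\chi_\lambda\|_{L^2\to L^2} \leq C\lambda^{n/8}$, hence the core estimate with $C_f = \|g\|_\infty \cdot C$.

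Collecting the four pieces yields (\ref{pourdecompostionfinale}). The main conceptual point -- and the only place where the specific scale $\chi_\lambda = \chi(\lambda^{1/4}x)$ matters -- is the matching of powers: $\|\chi_\lambda\|_{L^2}\sim \lambda^{-n/8}$ exactly compensates the factor $\lambda^{n/4}$ coming from $\|e^{-tP/\lambda}\|_{L^1\to L^2}$, producing the advertised smallness $\lambda^{n/8}$. I do not expect any substantive obstacle, since the whole argument rests only on the Nash inequality, the $L^1$-boundedness of the heat semigroup (itself coming from the maximum principle and the absence of a potential term in $P$), and elementary functional calculus; the presence of the obstacle plays no role because $\chi_\lambda$ is compactly supported in $\overline{\Omega}$ and the heat flow on $\Omega$ with Dirichlet data satisfies the same $L^p\to L^2$ estimates.
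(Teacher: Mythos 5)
Your proof is correct and follows essentially the same route as the paper: the same three-term decomposition via $\chi_\lambda$, the same use of $\scal{\lambda^{1/2}x}^{-\sigma}\le 1$ and the adjoint to reduce to $\|f(P/\lambda)\chi_\lambda\|$, and the same Nash/heat-flow argument with $\|\chi_\lambda\|_{L^2}\sim\lambda^{-n/8}$ compensating the $\lambda^{n/4}$ from the $L^1\to L^2$ heat bound. The only cosmetic difference is that the paper factors $\|(P/\lambda+1)^{-N}\chi_\lambda\|\le\|(P/\lambda+1)^{-N}\|_{L^1\to L^2}\|\chi_\lambda\|_{L^2}$ and cites the intermediate estimate (\ref{estutile}), whereas you re-derive that same bound inline by pushing $\chi_\lambda$ through the Gamma-function representation.
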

 \noindent {\it Proof.} We decompose first
 \begin{equation}
  f (P/\lambda) = \chi_{\lambda} f (P/\lambda) + (1-\chi_{\lambda}) f (P/\lambda)  \chi_{\lambda} + (1-\chi_{\lambda}) f (P/\lambda) (1-\chi_{\lambda}) .  \label{deuxiemetermeadjoint}
 \end{equation}
  Using the spectral theorem, we obtain for any fixed $N$ (as large as we wish)
\begin{equation}
\begin{aligned}
 \Big\|  f (P/\lambda) \chi_{\lambda} \Big\|  \leq  C_f \Big\|  (P_{\lambda} + 1)^{-N} \chi_{\lambda} \Big\| 
 & \leq  C_f \Big\|  (P_{\lambda} + 1)^{-N} \Big\|_{L^1 \rightarrow L^2} ||  \chi_{\lambda}||_{L^2}  \\
 & \leq  C_f^{\prime}  \lambda^{\frac{n}{8}} , 
 \end{aligned}
\end{equation}
using  in the second line (\ref{estutile}) with $s= \frac{n}{4} $ and that $ ||\chi_{\lambda}||_{L^2} = O (\lambda^{-\frac{n}{8}}) $. The same estimate holds for $ \chi_{\lambda} f (P/\lambda) $ by taking the adjoint. This treats the case of the first two terms of the RHS of (\ref{deuxiemetermeadjoint}), by using the crude estimate $ || \scal{\lambda^{\frac{1}{2}}x}^{-\sigma} || \leq 1 $. Adding and substracting $ (1-\chi_{\lambda}) f (-\Delta/\lambda) (1- \chi_{\lambda}) $ to (\ref{deuxiemetermeadjoint}) we obtain the result by using that
$$ \Big\| \scal{\lambda^{\frac{1}{2}}x}^{-\sigma} (1-\chi_{\lambda}) f (-\Delta/\lambda) (1-\chi_{\lambda}) \Big\| 
\leq \Big\| \scal{\lambda^{\frac{1}{2}}x}^{-\sigma}  f (-\Delta/\lambda)  \Big\| $$
and again that $ || \scal{\lambda^{\frac{1}{2}}x}^{-\sigma} || \leq 1 $ for the contribution of $ D_f (\lambda) $. \finpreuve

We next recall a simple version of the uncertainty principle: localising a function in frequencies $| \xi \lambda^{-1/2} -1 |  \ll 1$ forces a space  delocalisation $1 \ll |\lambda ^{1/2} x |$. 

\begin{lemm} \label{lemmeexplicit} For any $ \epsilon > 0 $, there exists $ f \in C_0^{\infty} (\Ra,[0,1]) $ such that $f(1) =1$, and
$$  \Big\| \scal{\lambda^{\frac{1}{2}}x}^{-\sigma} f (-\Delta /\lambda) \Big\| \leq \epsilon, \qquad \mbox{for all} \ \lambda > 0 . $$
In other words, the above norm goes to zero as the support of $ f  $ shrinks to $\{1\} $,  uniformly in $ \lambda $.
\end{lemm}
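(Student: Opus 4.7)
The plan is to exploit the exact dilation invariance of the flat Laplacian to reduce the inequality to a $ \lambda $-independent statement, and then to prove smallness via a compact-times-strongly-convergent-to-zero argument.

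\textbf{Step 1 (rescaling).} Let $ D_\lambda \varphi(x) := \lambda^{n/4} \varphi(\lambda^{1/2} x) $, a unitary operator on $ L^2(\Ra^n) $. A direct computation gives
\begin{equation*}
D_\lambda^{-1}(-\Delta) D_\lambda = -\lambda \Delta, \qquad D_\lambda^{-1} \scal{\lambda^{1/2} x}^{-\sigma} D_\lambda = \scal{x}^{-\sigma},
\end{equation*}
so by functional calculus $ f(-\Delta/\lambda) = D_\lambda f(-\Delta) D_\lambda^{-1} $. Consequently
\begin{equation*}
\bigl\| \scal{\lambda^{1/2} x}^{-\sigma} f(-\Delta/\lambda) \bigr\| = \bigl\| \scal{x}^{-\sigma} f(-\Delta) \bigr\|,
\end{equation*}
which is independent of $ \lambda $. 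It therefore suffices to exhibit $ f \in C_0^\infty(\Ra,[0,1]) $ with $ f(1) = 1 $ for which the right-hand side is less than $ \epsilon $.

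\textbf{Step 2 (compactness and strong convergence).} Fix once and for all $ g \in C_0^\infty((0,\infty)) $ equal to $ 1 $ on a neighborhood of $ 1 $, and restrict attention to $ f $ supported where $ g \equiv 1 $, so that $ f(-\Delta) = f(-\Delta) g(-\Delta) $. Using that $ f(-\Delta) $ and $ \scal{x}^{-\sigma} $ are self-adjoint,
\begin{equation*}
\bigl\| \scal{x}^{-\sigma} f(-\Delta) \bigr\| = \bigl\| f(-\Delta) \cdot g(-\Delta) \scal{x}^{-\sigma} \bigr\|.
\end{equation*}
The operator $ K := g(-\Delta) \scal{x}^{-\sigma} $ is compact on $ L^2 $: for Schwartz $ a $ and $ b $ the operator $ b(D) a(x) $ is Hilbert--Schmidt, with kernel $ \check b(x-y) a(y) \in L^2(dx\,dy) $, and the case at hand follows by uniformly approximating $ \scal{x}^{-\sigma} $ by the Schwartz truncations $ \chi(\cdot/R) \scal{\cdot}^{-\sigma} $ as $ R \to \infty $ ($g(|\xi|^2)$ being already Schwartz). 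On the other hand, for every $ u \in L^2 $, Plancherel and dominated convergence give
\begin{equation*}
\| f(-\Delta) u \|^2 = \int |f(|\xi|^2)|^2 |\hat u(\xi)|^2 \, d\xi \longrightarrow 0
\end{equation*}
as the support of $ f $ shrinks to $ \{1\} $ (recall $ 0 \leq f \leq 1 $). Thus $ f(-\Delta) \to 0 $ in the strong operator topology.

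\textbf{Step 3 (conclusion).} The elementary fact that $ \| T_n K \| \to 0 $ whenever $ K $ is compact and $ T_n \to 0 $ strongly -- proved by covering the precompact set $ \overline{K B_{L^2}} $ by a finite $ \delta $-net and invoking uniform boundedness $ \sup_n \| T_n \| < \infty $ -- applied to a sequence $ T_n = f_n(-\Delta) $ with supports shrinking to $ \{1\} $, yields $ \| \scal{x}^{-\sigma} f_n(-\Delta) \| \to 0 $. Combined with Step~1, taking $ n $ large produces $ f $ satisfying the required estimate. The main subtle point is the compactness of $ K $ for arbitrarily small $ \sigma > 0 $, since $ \scal{x}^{-\sigma} $ is not itself Schwartz; this is handled by the truncation approximation just described.
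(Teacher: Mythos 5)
Your proof is correct and follows essentially the same approach as the paper: exploit the exact scale invariance of $-\Delta$ to eliminate the $\lambda$-dependence, then decompose into a fixed compact operator times a family of spectral cutoffs that converges strongly to zero as the support shrinks to $\{1\}$ (using that $1$ is not an eigenvalue of $-\Delta$). The only cosmetic difference is that you place the compact factor on the right ($f(-\Delta)\cdot g(-\Delta)\scal{x}^{-\sigma}$) whereas the paper places it on the left ($\scal{x}^{-\sigma}\varphi(-\Delta)\cdot f(-\Delta)$, which works because $f(-\Delta)$ is self-adjoint); you also supply the details (Hilbert--Schmidt truncation for compactness, the $\delta$-net argument for strong-times-compact) that the paper leaves implicit.
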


\noindent {\it Proof.} By scaling $ \Big\| \scal{\lambda^{\frac{1}{2}}x}^{-\sigma} f (-\Delta /\lambda) \Big\| = \Big\| \scal{x}^{-\sigma} f (-\Delta ) \Big\| $ so the dependence on $ \lambda $ is artificial. One then concludes by observing that, if $ f  $ is supported close to $1$ and $ \varphi $ is a smooth cutoff equal to $1$ near $ 1 $, we may write
$$ \scal{x}^{-\sigma} f (-\Delta) = \Big( \scal{x}^{-\sigma} \varphi (-\Delta) \Big) f (-\Delta) $$
where the parenthese is a fixed compact operator while $ f (-\Delta) $ goes to zero in the strong sense as the support of $f$ shrinks to $\{ 1\} $ (since $1$ is not an eigenvalue of $ -\Delta $), so that composition of the two goes to zero in operator norm. \finpreuve

\noindent {\bf Proof of Proposition~\ref{compactness}.} It is a straightforward consequence of Proposition~\ref{propfacile} and Lemma~\ref{lemmeexplicit}
together with the fact that, for a fixed $f$ (chosen according to Lemma~\ref{lemmeexplicit}), we have
$$ || D_f (\lambda) || \rightarrow 0, \qquad \lambda \rightarrow 0 $$
which is a consequence of Proposition~\ref{lemm4} below if $ n \geq 3 $, or Proposition~\ref{lemme5} if $ n=2 $. \finpreuve

 For convenience and without loss of generality, we will assume everywhere that
$$ 0 < \rho < 1 . $$
This will simplify the table used in the next proof.
\begin{prop} \label{lemm4} Let $ n \geq 3 $. 
Then there exists $ \delta > 0 $ such that  for any $ f \in C_0^{\infty} (\Ra) $ one can find $ C > 0 $ such that
 $$ \Big\| D_f (\lambda) \Big\| \leq C \lambda^{\delta} , \qquad \lambda \ll 1 .$$
\end{prop}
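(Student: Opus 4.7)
The natural route is to compare $f(P/\lambda)$ with $f(-\Delta/\lambda)$ via the Helffer--Sj\"ostrand functional calculus. Fix an almost analytic extension $\tilde f \in C_c^\infty(\mathbb{C})$ of $f$, i.e. such that $\bar\partial \tilde f$ vanishes to infinite order on $\mathbb{R}$. Then
$$ f(P/\lambda) - f(-\Delta/\lambda) = \frac{1}{\pi}\int_{\mathbb{C}} \bar\partial\tilde f(z)\Bigl[(P/\lambda - z)^{-1} - (-\Delta/\lambda - z)^{-1}\Bigr]\, dL(z),$$
and the resolvent identity turns the bracket into
$$ - (P/\lambda - z)^{-1} \lambda^{-1}(P+\Delta)(-\Delta/\lambda - z)^{-1}.$$
By Assumption~\ref{asum.1.1} the operator $Q := P + \Delta$ is a second-order differential operator whose top-order coefficients $\mu g^{jk} - \delta_{jk}$ and lower-order coefficients all lie in $S^{-\rho}$. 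The compensating factor of $\lambda^{-1}$ is absorbed by the smoothing of the two resolvents (each resolvent of $-\Delta/\lambda$ carries a factor $\lambda$ against $\Delta$), so that the overall operator is bounded uniformly in $\lambda$; what we need is extra decay in $\lambda$, obtained from the cutoffs.

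\textbf{Extracting the gain.} The key structural point is that $(1-\chi_\lambda)(x)$ is supported in $|x| \gtrsim \lambda^{-1/4}$, where the coefficients of $Q$ are $O(\langle x\rangle^{-\rho}) = O(\lambda^{\rho/4})$. I would sandwich the resolvent difference between the two cutoffs $(1-\chi_\lambda)$ and then try to propagate these cutoffs inward so that they land directly on the coefficients of $Q$. Concretely, one writes
$$(1-\chi_\lambda)(P/\lambda - z)^{-1} = (P/\lambda - z)^{-1}(1-\chi_\lambda) + (P/\lambda - z)^{-1}[P/\lambda,\chi_\lambda](P/\lambda - z)^{-1},$$
and similarly on the right with $(-\Delta/\lambda - z)^{-1}$. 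The \emph{main term} puts $(1-\chi_\lambda)$ directly next to $\lambda^{-1}Q$, giving a factor $\lambda^{\rho/4}$ times a $z$-dependent operator bounded uniformly in $\lambda$ (here one uses Proposition~\ref{lemmparametrix} to trade the perturbed resolvents for pseudodifferential operators and control the order-$2$ factor $Q$). The commutator $[P/\lambda,\chi_\lambda]$ is a first-order differential operator whose coefficients gain a $\lambda^{1/4}$ from the chain rule and are supported where $|x|\sim\lambda^{-1/4}$; such \emph{error terms} are compactly supported at this scale and can be handled as in Proposition~\ref{propfacile}, by invoking Lemma~\ref{lemmheatHardy} with $s$ close to $n/4$ to convert the spatial compactness into a power $\lambda^{n/8}$ of decay.

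\textbf{Assembly and main obstacle.} The $z$-dependence of the various operator norms is at worst polynomial in $|\mathrm{Im}\, z|^{-1}$, which is killed by the infinite vanishing of $\bar\partial \tilde f$ on $\mathbb{R}$, so the $\int dL(z)$ is harmless and we obtain $\|D_f(\lambda)\| \lesssim \lambda^{\delta}$ with $\delta = \min(\rho/4, n/8) > 0$. The main technical obstacle is a careful bookkeeping of the commutators $[P/\lambda,\chi_\lambda]$ and the second-order nature of $Q$: one must ensure that each derivative in $Q$ is absorbed into an adjacent resolvent \emph{without} destroying the $\lambda^{\rho/4}$ gain, which forces one to work with derivatives of $(P/\lambda - z)^{-1}$ and use uniform $H^{-1}/H^1$ mapping properties on the support of $(1-\chi_\lambda)$ (again supplied by Proposition~\ref{lemmparametrix}). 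The restriction $n\geq 3$ enters precisely in the error-term analysis: the heat-flow power $\lambda^{n/8}$ used to control the compactly supported remainders (as in Proposition~\ref{propfacile}) degenerates in dimension two, where the Nash scaling and the logarithmic behavior of the two-dimensional free resolvent near zero require the separate treatment reserved for Proposition~\ref{lemme5}.
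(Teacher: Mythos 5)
Your high-level plan (Helffer--Sj\"ostrand formula, resolvent identity producing $\lambda^{-1}(P+\Delta)$ in the middle, coefficient decay as the source of the gain) is the right starting point and overlaps with the paper's. But the mechanism you propose for the main term does not close. You claim that sandwiching $(1-\chi_\lambda)$ next to $\lambda^{-1}Q$ gives a factor $\lambda^{\rho/4}$ times a $z$-dependent operator that is uniformly bounded, because on $\mathrm{supp}(1-\chi_\lambda)\subset\{|x|\gtrsim\lambda^{-1/4}\}$ the coefficients of $Q$ are $O(\lambda^{\rho/4})$. This works for the top-order ($|\alpha|=2$) part of $Q$, where $\partial^2(-\Delta/\lambda-z)^{-1}$ supplies the full factor $\lambda$ to cancel the $\lambda^{-1}$. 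It fails for the lower-order parts. Writing $Q=\sum_{|\alpha|\le 2}q_\alpha\partial^\alpha$ with $q_\alpha\in S^{-\rho-(2-|\alpha|)}$, the sup-norm gain from the support of $1-\chi_\lambda$ is $\lambda^{(\rho+2-|\alpha|)/4}$, while $\partial^\alpha(-\Delta/\lambda-z)^{-1}$ yields only $\lambda^{|\alpha|/2}$; the total for one such block is
$\lambda^{-1}\,\lambda^{(\rho+2-|\alpha|)/4}\,\lambda^{|\alpha|/2}=\lambda^{-\frac12+\frac{\rho}{4}+\frac{|\alpha|}{4}}$,
which is negative for $|\alpha|=0,1$ when $\rho<1$. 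So your ``main term'' is not actually controlled by $\lambda^{\rho/4}$, and the plan as written does not produce any positive power of $\lambda$.

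What the paper does instead is never commute the cutoffs through the resolvents. It computes the defect $D(\lambda,z)$ explicitly as a sum $\sum_{|\alpha|\le 2} a_\alpha(x)\partial^\alpha$ sitting between the two resolvents, with $a_\alpha\in S^{-\rho+|\alpha|-2}$, and then extracts the extra $\lambda$ power from the \emph{polynomial} decay of $a_\alpha$ paired with Lemma~\ref{lemmheatHardy} applied to \emph{both} resolvents: $\|(P/\lambda-z)^{-1}\langle x\rangle^{-\sigma_1}\|\cdot\|\langle x\rangle^{-\sigma_2}(-\Delta/\lambda-z)^{-1}(-\Delta)^{|\alpha|/2}\|\lesssim \lambda^{s_1+s_2+|\alpha|/2}$ subject to $\sigma_1+\sigma_2=\rho+2-|\alpha|$ and $s_i\le n/4$. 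The $\lambda^{-1/4}$ support of $\chi_\lambda$ plays no role in this gain; $\chi_\lambda$ only serves to produce $(1-\chi_\lambda)(1-\chi)=(1-\chi_\lambda)$. This also corrects your attribution of the $n\ge 3$ restriction: it is \emph{not} that the $\lambda^{n/8}$ bound on compactly supported remainders degenerates in $2$d (that bound is still positive when $n=2$). The obstruction is in the \emph{main} term with $|\alpha|=0$: one needs $s_1+s_2>1$ with $s_i\le n/4$, which forces $n\ge 3$. For $n=2$ the zero-order piece only gives $\lambda^{1}$ before division by $\lambda$, which is exactly why the paper develops the separate two-dimensional argument with the logarithmic cutoffs of Lemma~\ref{cut}.
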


\noindent {\it Proof.}  We use that $(1- \chi_\lambda) (1- \chi)= (1- \chi_\lambda)$ for $ \lambda \ll 1 $. Since $ (1-\chi) (-\Delta/\lambda-z)^{-1} (1- \chi_\lambda) $ has a range contained in the domain of $P$,  we can compute  for $z \in \Ca \setminus [ 0 , \infty ) $
\begin{equation}\label{equatD}
 D (\lambda,z) := ( P/\lambda -z )  \Big( (1-\chi) (P/\lambda - z)^{-1} (1-\chi_\lambda)-  (1-\chi) (-\Delta/\lambda - z)^{-1} (1-\chi_\lambda) \Big) .
 \end{equation}
 The interest of this quantity is that, using the Helffer-Sj\"ostrand formula (see \cite[Thm 8.1]{DiSj}), \begin{equation}
f (P/\lambda) = \frac{1}{\pi} \int_{{\mathbb C}} \bar{\partial} \tilde{f}(z) (P/\lambda - z)^{-1} L (dz) \label{HelfferSjostrand}
\end{equation}
we have
\begin{equation}
 D_f (\lambda) = \frac{1}{\pi} \int_{{\mathbb C}} \bar{\partial} \tilde{f}(z) \Big( (1-\chi_{\lambda}) (P/\lambda - z)^{-1} D (\lambda,z) \Big) L (dz) .  \label{HelfferSjostrand2} 
\end{equation}
A straightforward calculation shows that $ D (\lambda,z) $ reads
$$  - \big[ P / \lambda, \chi \big] (P/\lambda-z)^{-1} (1 - \chi_{\lambda}) + \Big( \big[ P / \lambda , \chi \big]   - (1-\chi )  (P/\lambda - (-\Delta/\lambda) \big) \Big) (-\Delta/\lambda - z)^{-1} (1- \chi_\lambda) $$
where one can write
\begin{equation*}
 [P,\chi ] - (1-\chi ) \big(  P - (-\Delta) \big)  =  \sum_{|\alpha| \leq 2}  a_{\alpha}(x)\partial^{\alpha},  
\end{equation*}  
with $ a_{\alpha} \in S^{-\rho + |\alpha| - 2} $  equal to zero near $ {\mathcal K} $.  Actually the zero order term is 
\begin{equation}
 a_0 = P (\chi)  \label{pourref2d}
\end{equation}
and  is compactly supported but we do not need this stronger information. We wish to estimate the $ L^2 \rightarrow L^2 $ operator norm of $ (1-\chi_{\lambda}) (P/\lambda - z)^{-1} D (\lambda,z) $. Up to a factor
 $ 1/ \lambda $, we thus have to consider
 $$
\Big\| (1- \chi_\lambda)(P/\lambda-z)^{-1}  a_{\alpha} \partial^{\alpha}( -\Delta/\lambda - z)^{-1} (1- \chi_\lambda)\Big\|
 $$
 where $ \partial^{\alpha} $ and $ (-\Delta/\lambda-z)^{-1} $ commute. Using Lemma~\ref{lemmheatHardy} and the spectral theorem, we can bound this norm by
 \begin{equation}
  \Big\| (P/\lambda-z)^{-1} \scal{x}^{-\sigma_1 }\Big\| \ \Big\| \scal{x}^{-\sigma_2} ( -\Delta/\lambda -z)^{-1} (-\Delta)^{\frac{|\alpha|}{2}}\Big\| \lesssim \frac{\scal{z}^2}{|{\rm Im}(z)|^2} \lambda^{s_1+ s_2+ \frac{|\alpha|}{2}}  \label{pour2D}
  \end{equation}
provided we select $ \sigma_1 , \sigma_2,s_1,s_2  $ according to the following table 
\begin{center}
\begin{tabular}{| c | c | c | c | c | }
\hline
$|\alpha|$ & $\sigma_1 $ & $s_1$ & $\sigma_2$ & $s_2$ \\ \hline
 2 & $ \rho $ & $\in [0,\frac{n}{4}] \cap [ 0 , \frac{\rho}{2} )$ & 0 & 0 \\ \hline
 1 & $ 1 + \frac{\rho}{2} $ & $\in [0,\frac{n}{4}] \cap [ 0, \frac{1}{2} + \frac{\rho}{4} )  $ & $ \frac{\rho}{2} $ & $ \in [0,\frac{n}{4}] \cap [ 0 , \frac{\rho}{4} ) $ \\ \hline
 0 & $ 1 + \frac{\rho}{2} $ &  $\in [0,\frac{n}{4}] \cap [ 0, \frac{1}{2} + \frac{\rho}{4} )  $ & $1+ \frac{\rho}{2} $ &  $\in [0,\frac{n}{4}] \cap [ 0, \frac{1}{2} + \frac{\rho}{4} )  $ \\
 \hline
 \end{tabular}
\end{center}
so that, in particular, $ \sigma_1 + \sigma_2 = \rho + 2 - |\alpha| $. The powers of $ \scal{z}/|{\rm Im}(z)| $ show up by estimating
 $$\Big\| (P/\lambda - z)^{-1} (P/\lambda + 1)\Big\| \lesssim \frac{\scal{z}}{|{\rm Im}(z)|} , $$
 and similarly for $ - \Delta $.
One studies $ (P/\lambda-z)^{-1} [P,\chi] (-\Delta/\lambda-z)^{-1} $ similarly by considering only the cases $ |\alpha| = 0 , 1 $.
Overall, using that $ n \geq 3 $, hence that $ \frac{n}{4} > \frac{1}{2} $,  one can choose  $s_1,s_2 $ so  the right hand sides of  (\ref{pour2D}) are of order $ \lambda^{1+\delta} $ for some $ \delta > 0 $. After division by $ \lambda $, we conclude that 
\begin{equation}\label{estifi}
 \Big\| (1-\chi_{\lambda}) (P/\lambda - z)^{-1}  D (\lambda,z)\Big\|  \lesssim \frac{ \scal{z}^2}{|{\rm Im}(z)|^2} \lambda^{\delta} 
 \end{equation}
and get the result from (\ref{HelfferSjostrand2}) since $ \bar{\partial} \tilde{f} $ vanishes to infinite order on $\{ \mbox{Im}(z) = 0 \} $ and is compactly supported. 
\finpreuve

What prevents Proposition~\ref{lemm4} from working in dimension 2 is  the estimate (\ref{pour2D}) when $ \alpha = 0 $ in which case $ s_1 + s_2 + \frac{|\alpha|}{2}\leq \frac{n}{2}=1 $. We shall get rid of this problem by using the special structure of the zero-th order term given in (\ref{pourref2d}). The key point in the proof is the following lemma inspired from considerations from 2d potential theory.
\begin{lemm}\label{cut}
There exists a family, indexed by $ \ell > 1 $, of radial functions $\psi_{\ell} \in C^\infty_0 ( \mathbb{R}^2)$ such that
\begin{itemize}
\item $\psi_{\ell}$ is supported in $\{ | x|\leq \ell^2\}$ and is equal to $1$ on $\{ |x | \leq \ell / 2 \}$,
\item there exists $C>0$ such that for all $\ell$, 
$$  \Big\| |x|^{-1} \nabla_x \psi_{\ell} \Big\|_{L^1} +  \Big\| \nabla^2_x \psi_{\ell} \Big\|_{L^1}  \leq C,$$
\item the Laplacian of $\psi_{\ell}$,  $\Delta (\psi_{\ell})$ satisfies 
$$   \|  \Delta \psi_{\ell} \|_{L^1 } \lesssim \frac{1}{\log \ell}.$$
\end{itemize}
\end{lemm}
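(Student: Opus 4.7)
The approach is to exploit the fact that $\log|x|$ is harmonic on $\mathbb{R}^2 \setminus \{0\}$: by interpolating between $1$ and $0$ using a function of $\log|x|$ in the annulus $\ell/2 \le |x| \le \ell^2$, the radial Laplacian contribution will be $O(1/\log\ell)$ because $\Delta$ applied to a function of $\log r$ gains a factor $1/r^2$ which integrates against $2\pi r\, dr$ to give only $\int |v''(\log r)|\, dr/r$, a logarithmic quantity. The outer radius $\ell^2$ versus inner radius $\ell/2$ creates a ratio with logarithm $\sim \log\ell$, which is precisely what is needed.

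Concretely, fix once and for all a smooth function $\chi : \mathbb{R} \to [0,1]$ with $\chi \equiv 1$ on $(-\infty,0]$ and $\chi \equiv 0$ on $[1,\infty)$. Set $L_\ell := \log(\ell^2) - \log(\ell/2) = \log\ell + \log 2$ and define, for $|x| \in [\ell/2, \ell^2]$,
\[
\psi_\ell(x) := \chi\!\left( \frac{\log|x| - \log(\ell/2)}{L_\ell} \right),
\]
extended by $1$ on $\{|x| \le \ell/2\}$ and by $0$ on $\{|x|\ge \ell^2\}$. Radiality and the support conditions are then immediate.

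The key computations are all reduced to one-variable integrals via the substitution $u = \log r$. For a radial function $u(x) = v(\log r)$ in $\mathbb{R}^2$ one has $\Delta u = v''(\log r)/r^2$, and here $v(s) = \chi((s-\log(\ell/2))/L_\ell)$ so $v''(s) = \chi''(\cdot)/L_\ell^2$. Passing to polar coordinates and using $dr/r = du$ gives
\[
\|\Delta \psi_\ell\|_{L^1} = \frac{2\pi}{L_\ell^2}\int_0^\infty \frac{|\chi''(\cdot)|}{r}\, dr = \frac{2\pi}{L_\ell}\|\chi''\|_{L^1} \lesssim \frac{1}{\log\ell}.
\]
The same change of variables, combined with $\partial_i\partial_j u = u_{rr}\hat x_i\hat x_j + (u_r/r)(\delta_{ij} - \hat x_i\hat x_j)$ and the identities $u_r = v'(\log r)/r$, $u_{rr} = (v''-v')(\log r)/r^2$, reduces $\|\nabla^2\psi_\ell\|_{L^1}$ to $\int_{\mathbb{R}}(|v'|+|v''|)\,du \lesssim \|\chi'\|_{L^1} + \|\chi''\|_{L^1}/L_\ell = O(1)$. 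Similarly $\||x|^{-1}\nabla\psi_\ell\|_{L^1} = 2\pi\int_0^\infty |u_r|\,dr = 2\pi\|\chi'\|_{L^1}$, bounded uniformly in $\ell$.

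There is no real obstacle here; the only subtle point is to notice that the logarithmic change of scale is the right one to simultaneously keep $\nabla^2\psi_\ell$ and $|x|^{-1}\nabla\psi_\ell$ uniformly $L^1$ while forcing $\Delta\psi_\ell$ into $L^1$ with a small, $(\log\ell)^{-1}$ bound — a choice dictated by the cancellation $\Delta\log|x|=0$ in dimension two. Any other profile (e.g.\ a linear interpolation in $r$) would fail the $1/\log\ell$ estimate.
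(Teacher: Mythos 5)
Your proof is correct and rests on the same key observation as the paper's: in dimension two, $\log|x|$ is harmonic, so building the cutoff as a smooth profile in the variable $\log r$ makes $\Delta\psi_\ell$ scale like $1/(\log\ell)$ while keeping $\nabla^2\psi_\ell$ and $|x|^{-1}\nabla\psi_\ell$ uniformly in $L^1$. The execution differs slightly in presentation — the paper builds the profile by integrating and normalizing a function $f_\ell(r)\approx 1/r$ on the annulus (so $g_\ell\approx \log r/\log\ell$), whereas you apply a fixed shape $\chi$ directly to $(\log|x|-\log(\ell/2))/L_\ell$; your version is arguably the cleaner of the two, since the change of variables $u=\log r$ reduces all three norms to one-dimensional $L^1$ bounds of $\chi'$ and $\chi''$, but the underlying mechanism is identical.
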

\begin{proof} Pick first  $\zeta \in C^\infty_0 (\mathbb{R})$, supported on $ [-1,1] $  and equal to $1$ on $[-1/2, 1/2]$, and define
\begin{equation}
f_{\ell}(r)  = \frac 1 {r} \zeta (\frac r \ell) (1- \zeta(r)) \in C^\infty_0 ( 0, \ell).
\end{equation}
We then have for $\ell\geq 2$ 
$$ f_{\ell}^{\prime} + \frac 1 r f_{\ell} = - \frac{\zeta^{\prime} (r)}{ r  } \zeta (r / \ell)  + (1-\zeta)(r)\frac {\zeta^{\prime}(r/\ell) } { \ell r},$$ 
so that 
\begin{equation}\label{3.11}
\int_0^{+\infty} | f^{\prime}_{\ell} (r)+ \frac 1 r f_{\ell}(r)| r dr = O(1)_{\ell\rightarrow+ \infty}.
\end{equation} 
We now define the function 
$$g_{\ell}(r) = \frac{ \int_0 ^r f_{\ell}(s) ds } { \int_0^{+ \infty}  f_{\ell}(s) ds}
$$ which is a smooth function equal to $0$ on $(0, 1/2)$ and equal to $1$ for $r\geq \ell$. 
Notice that there exists $c, C >0$ such that 
\begin{equation}\label{3.12}
c \log (\ell)\leq \int_{1} ^{\ell/2} \frac {dr} r \leq  \int_0^{+ \infty}  f_\ell(s) ds
\leq \int_{1/2}^{\ell} \frac{dr}r \leq C \log (\ell).
\end{equation}
As a consequence, we deduce that 
the function
$\widetilde{\psi}_{\ell}(x) := 1- g_{\ell}( | x|)$ satisfies 
$$\mbox{supp} (\widetilde{\psi}_{\ell}) \subset \{| x | \leq \ell\} \qquad \mbox{and} \qquad \widetilde{\psi}_{\ell} \equiv 1 \ \ \mbox{on} \ \{ |x| \leq 1/2 \} ,$$ 
and
\begin{equation}
\label{estimations}
| \nabla _x\widetilde{\psi}_{\ell} (x)|\leq \frac{ C }{\log (\ell) | x| }, \quad |\nabla^2_x \widetilde{\psi}_{\ell} (x) |\leq \frac{ C }{\log (\ell) | x|^2}. \end{equation}
This implies 
$$ \Big\| \frac{1}{|x|} \nabla _x\widetilde{\psi}_{\ell} \Big\|_{L^1} + \Big\| \nabla^2_x \widetilde{\psi}_{\ell} (x) \Big\|_{L^1} \leq C,
$$ while, on the other hand, using~\eqref{3.11}, \eqref{3.12}, we have
$$ \Big\| \Delta \widetilde{\psi}_{\ell} \Big\|_{L^1} = \int_0 ^{+\infty} \Big| \left( \partial_r^2 + \frac 1 r \partial_r \right) g_{\ell} (r) \Big| r dr \lesssim \frac{1}{\log (\ell)} .$$
To conclude the proof of Lemma~\ref{cut} it just reamins to ensure the first (support) conditions,  which is automatic by putting 
$$ \psi_{\ell} (x) = \widetilde{\psi}_{\ell} ( x/ \ell),
$$ since the scaling factor preserves the $L^1$ norms considered.
\end{proof}

\begin{prop} \label{lemme5}  Let $ n = 2 $. For any $ f \in C_0^{\infty}(\Ra) $ one can find $ C > 0 $ such that
$$  \Big\| D_f (\lambda) \Big\| \leq  \frac{C}{| \log \lambda |} , \qquad \lambda \ll 1 . $$
\end{prop}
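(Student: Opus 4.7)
The plan is to mimic the proof of Proposition~\ref{lemm4} verbatim, modifying only the $|\alpha|=0$ contribution in the decomposition of $D(\lambda,z)$. For $|\alpha|=1,2$, the table of exponents used there already delivers $s_1+s_2+|\alpha|/2>1$ even when $n=2$: the margin comes from $\rho>0$, not from the dimension. These contributions are therefore $O(\lambda^\delta)$ for some $\delta>0$ and hence $o(1/|\log\lambda|)$.

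The obstruction is the zero-th order term
\[
\mathcal{I}(\lambda,z):=\frac{1}{\lambda}(1-\chi_\lambda)(P/\lambda-z)^{-1}a_0(-\Delta/\lambda-z)^{-1}(1-\chi_\lambda),
\]
for which Lemma~\ref{lemmheatHardy} yields only $\lambda^{s_1+s_2}$ with $s_1+s_2\leq n/2=1$, matching but not beating the $1/\lambda$ prefactor. To exploit the compact support of $a_0=P(\chi)$ granted by~(\ref{pourref2d}), I would introduce the cutoff $\psi_\ell$ of Lemma~\ref{cut}, choosing $\ell=\ell(\lambda)$ polynomial in $\lambda^{-1}$ (say $\ell=\lambda^{-1/16}$) so that $\log\ell\asymp|\log\lambda|$ and, for $\lambda$ small, both $\psi_\ell a_0=a_0$ and $\psi_\ell(1-\chi_\lambda)=0$, the latter because $\mathrm{supp}(\psi_\ell)\subset\{|x|\leq\ell^2\}$ stays inside $\{|x|\ll\lambda^{-1/4}\}$.

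Writing $a_0=\psi_\ell a_0$, commuting $\psi_\ell$ through $(-\Delta/\lambda-z)^{-1}$, and using $\psi_\ell(1-\chi_\lambda)=0$ to kill the leading term, one obtains
\[
\mathcal{I}(\lambda,z)=-\frac{1}{\lambda^2}(1-\chi_\lambda)(P/\lambda-z)^{-1}a_0(-\Delta/\lambda-z)^{-1}\bigl(\Delta\psi_\ell+2\nabla\psi_\ell\cdot\nabla\bigr)(-\Delta/\lambda-z)^{-1}(1-\chi_\lambda),
\]
where the middle multipliers $\Delta\psi_\ell,\nabla\psi_\ell$ are supported in the annulus $\{\ell/2\leq|x|\leq\ell^2\}$, separated from both $\mathrm{supp}(a_0)$ and $\mathrm{supp}(1-\chi_\lambda)$.

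The main step, and the expected source of difficulty, is to convert the $L^1$ controls of Lemma~\ref{cut} --- $\|\Delta\psi_\ell\|_{L^1}\lesssim 1/\log\ell$, $\|\,|x|^{-1}\nabla\psi_\ell\|_{L^1}\lesssim 1$, together with the pointwise bound $|\nabla\psi_\ell|\lesssim(|x|\log\ell)^{-1}$ visible in the construction --- into the $L^2\to L^2$ estimate $\|\mathcal{I}(\lambda,z)\|\lesssim C\scal{z}^M|\mathrm{Im}(z)|^{-M}/\log\ell$. The strategy is a Schur--type kernel analysis: the logarithmic/Hankel kernel of $(-\Delta/\lambda-z)^{-1}$ in the near-field $|x|\cdot|y|\ll\lambda^{-1}$ contributes boundedly when paired against the compactly supported $a_0$, Lemma~\ref{lemmheatHardy} at the borderline $s=1/2$ (whose strict condition $\sigma>2s$ is compatible with the compact support of $a_0$) supplies the two $\lambda^{1/2}$-gains needed to absorb $1/\lambda^2$, and the $L^1$ smallness of $\Delta\psi_\ell$ yields the remaining $1/\log\ell$; the $\nabla\psi_\ell\cdot\nabla$ piece is handled similarly, pushing the extra $\nabla$ into one additional $\lambda^{1/2}$ of smoothing and using the pointwise bound on $\nabla\psi_\ell$. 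Once this holds uniformly on $\mathrm{supp}(\tilde f)$ with integrable growth in $|\mathrm{Im}(z)|^{-1}$, inserting into the Helffer--Sj\"ostrand integral~(\ref{HelfferSjostrand2}) as in the proof of Proposition~\ref{lemm4} yields $\|D_f(\lambda)\|\lesssim 1/|\log\lambda|$.
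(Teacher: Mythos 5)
There is a genuine gap. Your formula for $\mathcal I(\lambda,z)$ is correct, but the counting in the estimate does not close, and the deficit is exactly what makes $n=2$ delicate. Once you commute $\psi_\ell$ through $(-\Delta/\lambda-z)^{-1}$ you introduce an extra factor $\lambda^{-1}$ (from $[\psi_\ell,-\Delta/\lambda]=\lambda^{-1}(\Delta\psi_\ell+2\nabla\psi_\ell\cdot\nabla)$) and an \emph{extra} resolvent. You are left with a prefactor $\lambda^{-2}$ against \emph{three} resolvents, each of which gains at most $\lambda^{1/2}$ in dimension $2$ via Lemma~\ref{lemmheatHardy} (the exponent $s$ is capped at $n/4=1/2$). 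That is $\lambda^{3/2}$ of gain against $\lambda^{-2}$, a net deficit of $\lambda^{-1/2}$. Your statement that ``two $\lambda^{1/2}$-gains'' suffice to absorb $1/\lambda^2$ is off by a full factor of $\lambda$: two such gains absorb only $1/\lambda$. Moreover the Schur/kernel argument you invoke to close the gap actually destroys the logarithmic gain: the kernel of $(-\Delta/\lambda-z)^{-1}$ connecting the support of $a_0$ (at $|x|=O(1)$) to the support of $\Delta\psi_\ell$ (at $|y|\sim\ell^\alpha$, $\alpha\in[1,2]$) behaves like $\lambda\log(\lambda^{1/2}|x-y|)$, and with $\ell$ a small negative power of $\lambda$ one has $|\log(\lambda^{1/2}|x-y|)|\asymp|\log\lambda|\asymp\log\ell$. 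This factor $\log\ell$ multiplies $\|\Delta\psi_\ell\|_{L^1}\lesssim 1/\log\ell$ and cancels it, so the net bound from this route is $O(1)$, not $O(1/|\log\lambda|)$. The $\nabla\psi_\ell\cdot\nabla$ piece is even worse since $\|\nabla\psi_\ell\|_{L^1}\sim\ell^2/\log\ell$ diverges and the $|x|^{-1}$-weighted bound of Lemma~\ref{cut} goes the wrong way (it is an $|x|$-gain you would need on that support, not an $|x|^{-1}$-gain).

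The paper's route is structurally different and avoids both problems at once: it does not keep the fixed cutoff $\chi$ and commute $\psi_\ell$ through a resolvent a posteriori; instead it replaces $\chi$ by $\psi_{\lambda^{-1/10}}$ directly \emph{inside the definition of} $D(\lambda,z)$, using the algebraic identity $(1-\chi_\lambda)(1-\psi_{\lambda^{-1/10}})=(1-\chi_\lambda)$ in place of $(1-\chi_\lambda)(1-\chi)=(1-\chi_\lambda)$. Then the problematic $\alpha=0$ contribution becomes $\lambda^{-1}(1-\chi_\lambda)(P/\lambda-z)^{-1}\,P(\psi_{\lambda^{-1/10}})\,\bigl((-\Delta/\lambda-z)^{-1}-(P/\lambda-z)^{-1}\bigr)(1-\chi_\lambda)$: only one power of $\lambda^{-1}$, only two resolvents, and $P(\psi_\ell)$ sitting as a multiplier in the middle. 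The two resolvents, used as $L^2\to L^\infty$ and $L^1\to L^2$, each give $\lambda^{1/2}$ (this is exactly borderline in $n=2$), which cancels the $\lambda^{-1}$; the remaining factor is $\|P(\psi_\ell)\|_{L^1}\lesssim\|\Delta\psi_\ell\|_{L^1}+\|Q(\psi_\ell)\|_{L^1}\lesssim|\log\lambda|^{-1}+\lambda^{\rho/10}$, which is the desired bound. The moral is that the $\lambda$-dependent cutoff $\psi_\ell$ must replace $\chi$ at the level where the decomposition is built, not be inserted later by commutation --- commuting costs you both the extra $\lambda^{-1}$ and the extra, logarithmically divergent, resolvent.
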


\noindent {\it Proof.}  We review the proof of Proposition~\ref{lemm4}. Instead of using  $ (1-\chi_{\lambda}) (1-\chi) = (1-\chi_{\lambda}) $ we will rather use that
$$ (1-\chi_{\lambda}) (1 - \psi_{\lambda^{-\frac{1}{10}}}) = (1 - \chi_{\lambda}) $$
where $ \psi_{\lambda^{-\frac{1}{10}}} $ is the function constructed in Lemma~\ref{cut} with $ \ell = \lambda^{-\frac{1}{10}} $. Indeed $ (1-\chi_{\lambda}) $ is supported in $ \{ |x| \gtrsim \lambda^{-\frac{1}{4}} \} $ while $ (1-\psi_{\ell}) = 1$ if $ |x| \geq \ell^2 $ so that it suffices to choose $ \ell $ such that $ \ell^{2} \ll \lambda^{- \frac{1}{4} } $, which holds e.g. for $ \ell = \lambda^{- \frac{1}{10} } $. 

We study  the contribution of $\alpha =0$ to the decomposition of $ (1-\chi_{\lambda}) (P/\lambda-z)^{-1} D (\lambda,z) $. According to the observation (\ref{pourref2d}),  this term reads
\begin{equation}\label{prob}
\frac 1 \lambda  ( 1- \chi_\lambda)(P/\lambda -z )^{-1} P( \psi_{\lambda^{-  \frac{1}{10} }}) \bigl( (-\Delta /\lambda -z)^{-1} - (P/\lambda-z)^{-1}\bigr)(1- \chi_\lambda)
 \end{equation}
 Writing 
 $$(P/\lambda -z )^{-1} = (P/\lambda +1)^{-1} (P/\lambda +1) (P/\lambda-z)^{-1} =   (P/\lambda-z)^{-1}(P/\lambda +1) (P/\lambda +1),$$
 and using (\ref{estutile}), we see that $ (P/\lambda -z)^{-1}$ is bounded from $L^2$ to $L^\infty$, and from $L^1$ to $L^2$ by 
 $$C\frac{ \langle z\rangle } { |{\rm Im}( z)|}\ \lambda^{1/2}$$ (and similarly for $(- \Delta/\lambda -z)^{-1}$). 
 As a consequence, we can bound the operator norm in~\eqref{prob} by 
 $$  C \Big\| P(\psi_{\lambda^{-\frac{1}{10}}}) \Big\|_{L^1}.$$ 
We shall prove that this norm is of size $ |\log (\lambda)|^{-1} $.
 We write $P =- \Delta + Q$. According to Lemma~\ref{cut}, we have 
 $$ \Big\| \Delta ( \psi_{\lambda^{-\frac{1}{10}}}) \Big\|_{L^1} \lesssim |\log (\lambda)|^{-1} .
 $$ 
 Now we have 
  $$Q \big( \psi_{\lambda^{-\frac{1}{10}}} \big)= a (x)\cdot \nabla_x \big( \psi_{\lambda^{-\frac{1}{10}}} \big) + b(x) \nabla^2_x \big( \psi_{\lambda^{-\frac{1}{10}}} \big),$$
  with 
  $$a \in S^{- \rho -1},  \ \ b\in S^{- \rho}.$$
Since $| x| \gtrsim \lambda^{-\frac{1}{10}} $ on the support of $\nabla_x \psi_{\lambda^{-\frac{1}{10}}}$, and using again Lemma~\ref{cut}, we get that 
  $$ \Big\| Q \big( \psi_{\lambda^{-\frac{1}{10}}} \big) \Big\|_{L^1} \leq C \lambda^{\frac{\rho}{10}}.$$
  The contributions of terms corresponding to $ |\alpha| = 1,2 $ are handled as in the proof of Proposition~\ref{lemm4} (the replacement of $ \chi $ by $ \psi_{\lambda^{- \frac{1}{10} }} $ is irrelevant for $ \psi_{\lambda^{- \frac{1}{10} }} $ and all its derivatives are bounded on $ \Ra^n $ uniformly in $ \lambda $).
 Summing-up our estimates, we get that there exists $\delta>0$ such that 
 $$\Big\|  (1- \psi_{\lambda^{}}) (\bigl( (-\Delta /\lambda -z)^{-1} - (P/\lambda-z)^{-1}\bigr)(1- \chi_\lambda) \| \lesssim \frac{ \langle z\rangle ^2} { |{\rm Im}( z)|^2}  \bigl(\lambda ^\delta + \lambda^{\frac{\rho}{10}} + |\log (\lambda)|^{-1}  \bigr) .
 $$ 
We conclude again thanks to the Helffer-Sj\"ostrand formula. \finpreuve

\section{Resolvent estimates}\label{resolvent}

In this section, we use Theorem~\ref{TheoremeJMPlambda} to prove Theorems~\ref{mesurespectrale} and~\ref{theoresolventeintro}. The main point is 
to convert the weights $ (A_{\mu}^{\lambda} \pm i)^{-k} $  of Theorem~\ref{TheoremeJMPlambda} into  physical weights. We will use the following proposition.
\setcounter{equation}{0}
\begin{prop} \label{calculfonctionneldilatation} Let $ n \geq 2 $, $k \in \Na$ and $ f \in C_0^{\infty} (\Ra) $. Let $ \nu \geq k $ and $ s \in [0, \frac{n}{4}] $ be such that $ \nu > 2 s  $.  Then
\begin{equation}
\Big\| \big(A_{\mu}^{\lambda} + i \big)^k f (P/\lambda) \scal{x}^{-\nu} \Big\| \leq C \lambda^{s} , \label{bound0}
\end{equation}
as long as $ \lambda > 0 $ belongs to a bounded set.
\end{prop}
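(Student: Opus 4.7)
The plan is to factor the product as
$$
(A_\mu^\lambda+i)^k f(P/\lambda)\langle x\rangle^{-\nu}
\;=\; B_\lambda \cdot h(P/\lambda) \cdot C_\lambda,
$$
where
$$
B_\lambda := (A_\mu^\lambda+i)^k (P/\lambda+1)^{-N}, \qquad
C_\lambda := (P/\lambda+1)^{-\kappa}\langle x\rangle^{-\nu},
$$
$N\ge k$ is an integer, $\kappa=s+\varepsilon$ is a small positive exponent, and $h(t):=f(t)(t+1)^{N+\kappa}$; then I would bound each of the three factors separately, arranging matters so that all the decay in $\lambda$ comes from $C_\lambda$.

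The right-most factor is handled directly by Lemma~\ref{lemmheatHardy}: since $s\in[0,n/4]$, $\kappa>s$, and $\nu>2s$ by hypothesis, we obtain $\|C_\lambda\|\le C\lambda^s$. The middle factor $h(P/\lambda)$ is bounded uniformly in $\lambda$ by $\|h\|_\infty$ via the spectral theorem, because $h$ is smooth and compactly supported (with the same support as $f$, and $P/\lambda\ge 0$).

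The crux of the argument is then to show that $B_\lambda$ is bounded uniformly in $\lambda$. The base case $k=1$ uses that the coefficients of the first-order operator $A_\mu^\lambda$ are controlled by $(1-\chi)(\lambda^{1/2}x)|x|\le C\lambda^{-1/2}$ on their support; combined with the Nash-type inequality~(\ref{pourNash}) this gives
$$
\|A_\mu^\lambda u\|_{L^2} \;\lesssim\; \lambda^{-1/2}\|\nabla u\|_{L^2}+\|u\|_{L^2} \;\lesssim\; \|\sqrt{P/\lambda}\,u\|_{L^2}+\|u\|_{L^2} \;\lesssim\; \|(P/\lambda+1)^{1/2} u\|_{L^2},
$$
uniformly in $\lambda\in(0,1]$, so $A_\mu^\lambda(P/\lambda+1)^{-1/2}$ is bounded uniformly. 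For general $k$, I would bring the $k$ copies of $A_\mu^\lambda$ past the resolvent factors one at a time, producing at each step a commutator of the form $[A_\mu^\lambda,(P/\lambda+1)^{-1}]=-(P/\lambda+1)^{-1}[A_\mu^\lambda,P/\lambda](P/\lambda+1)^{-1}$ that is bounded uniformly by Proposition~\ref{bornessup}, which precisely says that every iterated commutator of $A_\mu^\lambda$ with $P/\lambda$ is $(P/\lambda+1)$-bounded. Taking $N\ge k$ ensures that enough resolvent factors are available to absorb every resulting occurrence of $A_\mu^\lambda$ and of its commutators.

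The main obstacle is this last iterative step; although each individual commutator is controlled by Proposition~\ref{bornessup}, the bookkeeping of signs and of how the $N$ resolvent factors are distributed among the $k$ copies of $A_\mu^\lambda$ and their iterated commutators has to be tracked carefully to preserve the uniformity in $\lambda$ through all intermediate rearrangements.
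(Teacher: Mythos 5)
There is a genuine gap in the proposal, and it occurs in the claimed base case for $B_\lambda$. You assert that the coefficients of the first-order operator $A_\mu^\lambda$ are controlled by $(1-\chi)(\lambda^{1/2}x)\,|x|\le C\lambda^{-1/2}$ on their support. This is false: since $\chi$ is compactly supported and equal to $1$ on a large ball, the cutoff $(1-\chi)(\lambda^{1/2}x)$ is supported in the \emph{exterior} region $|x|\gtrsim\lambda^{-1/2}$ and equals $1$ near infinity, so $(1-\chi)(\lambda^{1/2}x)\,|x|$ is bounded \emph{below} by $c\lambda^{-1/2}$ there but is \emph{unbounded above}. Consequently $A_\mu^\lambda$ has coefficients growing linearly in $x$ at infinity, and $B_\lambda=(A_\mu^\lambda+i)^k(P/\lambda+1)^{-N}$ is not a bounded operator on $L^2$: the resolvent factor $(P/\lambda+1)^{-N}$ is a function of $P$ alone and provides no spatial decay whatsoever to compensate the $|x|^k$ growth of $(A_\mu^\lambda+i)^k$. (Already for $k=1$, $P=-\Delta$, $\mu\equiv 1$, translating a bump function far from the origin shows that $A(-\Delta/\lambda+1)^{-N}$ has infinite operator norm.) The same objection kills the iterative step: each ``absorption'' of an $A_\mu^\lambda$ by a resolvent factor is of the same flawed type.

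The missing structural point — which is exactly what the paper's Lemma~\ref{presquetout} and the decomposition~\eqref{decompositionproduit} encode — is that the hypothesis $\nu\ge k$ is there precisely to cancel the $|x|^k$ growth coming from $(A_\mu^\lambda+i)^k$. One must keep the physical weight $\langle x\rangle^{-\nu}$ attached to the operator when one analyzes it, rather than split it off into a separate factor $C_\lambda$. Concretely, the paper writes $(A_\mu^\lambda+i)^k f(P/\lambda)$ as a rescaled pseudodifferential operator with symbol in $S^{-\infty,k}$ (so with $\langle x\rangle^k$ growth) plus a remainder $B_\lambda(P/\lambda+1)^{-N}$ with $B_\lambda$ bounded; composing with $\langle x\rangle^{-\nu}$ turns the pseudodifferential part into an operator with symbol in $S^{-\infty,k-\nu}\subset S^{0,0}$, hence uniformly bounded, while the rescaling of the weight produces the factor $\lambda^{\nu/2}\le\lambda^s$. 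The heat-kernel estimate of Lemma~\ref{lemmheatHardy} is then used only for the remainder term, not for a global factor $C_\lambda$ sitting on the far right. Your plan would therefore need to be reorganized so that the spatial weight is confronted with the growing symbol of $(A_\mu^\lambda+i)^k$ before any resolvent factor is introduced.
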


\begin{lemm} \label{presquetout} Let  $ N \in \Na $ and $ \lambda_0 > 0 $. Then, there exists a bounded family $ ( \psi_{k,\lambda} )_{\lambda \in (0,\lambda_0]} $ of $ S^{-N,k} $ and a bounded family $ (B_{\lambda})_{\lambda \in (0,\lambda_0]} $ of bounded operators on $ L^2 $ and $ \tilde{\varrho} \in C^{\infty} (\Ra^n) $, equal to $0$ near zero and to $1$ near infinity, such that
$$ \big(A_{\mu}^{\lambda} + i \big)^k f (P/\lambda)   = \Big( e^{i\tau A}  \psi_{k,\lambda} (x,D)  e^{-i \tau A} \Big) \tilde{\varrho} \big(\lambda^{\frac{1}{2}} x \big) + B_{\lambda }(P/\lambda + 1)^{-N} $$
for all $ \lambda \in (0,\lambda_0] $.
\end{lemm}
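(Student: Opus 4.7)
The strategy combines the Helffer--Sj\"ostrand formula
$f(P/\lambda) = \pi^{-1}\int_{\Ca}\bar\partial\tilde f(z)(P/\lambda-z)^{-1} L(dz)$
with the resolvent parametrix of Proposition~\ref{lemmparametrix}. First pick $\tilde\varrho\in C^\infty(\Ra^n)$ vanishing near $0\cup{\mathcal K}$ and equal to $1$ near infinity, with the support of $1-\tilde\varrho$ strictly contained in $\{\chi\equiv 1\}$; the coefficients of $A_\mu^\lambda$ then vanish on an open neighbourhood of the support of $(1-\tilde\varrho(\lambda^{\frac{1}{2}}\cdot))$, and a direct pointwise computation (using that either the coefficients of $A_\mu^\lambda$, or $(1-\tilde\varrho_\lambda)v$ together with its derivatives, vanish at each point) gives
$$ A_\mu^\lambda\big(1-\tilde\varrho(\lambda^{\frac{1}{2}}\cdot)\big) = 0,\qquad\text{hence}\qquad (A_\mu^\lambda+i)^k\big(1-\tilde\varrho(\lambda^{\frac{1}{2}}\cdot)\big) = i^k\big(1-\tilde\varrho(\lambda^{\frac{1}{2}}\cdot)\big) $$
as operators. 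Pick $\varrho$ of similar shape with $\varrho\tilde\varrho=\tilde\varrho$ and abbreviate $\tilde\varrho_\lambda(x):=\tilde\varrho(\lambda^{\frac{1}{2}}x)$, $\varrho_\lambda(x):=\varrho(\lambda^{\frac{1}{2}}x)$.

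Decompose $(A_\mu^\lambda+i)^k f(P/\lambda) = (A_\mu^\lambda+i)^k f(P/\lambda)\tilde\varrho_\lambda + (A_\mu^\lambda+i)^k f(P/\lambda)(1-\tilde\varrho_\lambda)$. For the first summand, take adjoints in Proposition~\ref{lemmparametrix} to write
$$ (P/\lambda-z)^{-1}\varrho_\lambda = e^{i\tau A}\tilde q_{\lambda,z}(x,D)e^{-i\tau A} - (P/\lambda-z)^{-N_0} e^{i\tau A}\tilde r_{\lambda,z}(x,D)e^{-i\tau A}, $$
with both symbol families bounded uniformly in $\lambda\in(0,\lambda_0]$ and with polynomial seminorm growth in $\scal{z}/|{\rm Im}(z)|$. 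Using $(A_\mu^\lambda+i)^k = e^{i\tau A}(a_\lambda(x,D)+i)^k e^{-i\tau A}$ with $(a_\lambda+i)^k$ in a bounded subset of $S^{k,k}$, multiplying by $\tilde\varrho_\lambda$ on the right (noting $\varrho_\lambda\tilde\varrho_\lambda=\tilde\varrho_\lambda$), and integrating against $\bar\partial\tilde f$, the $\tilde q_{\lambda,z}$ contribution yields $e^{i\tau A}\psi_{k,\lambda}(x,D)e^{-i\tau A}\tilde\varrho_\lambda$. Since $f$ is compactly supported and $p_\lambda(x,\xi)\geq c|\xi|^2$ uniformly in $x,\lambda$ for $|\xi|\geq 1$, the resulting symbol $\psi_{k,\lambda}$ is compactly supported in $\xi$, hence lies in a bounded subset of $S^{-N,k}$ for any $N$. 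The $\tilde r_{\lambda,z}$ contribution is brought to the form $B_\lambda^{(1)}(P/\lambda+1)^{-N}$ by writing $(P/\lambda-z)^{-N_0} = h_z(P/\lambda)(P/\lambda+1)^{-N}$ via spectral calculus (with $\|h_z(P/\lambda)\|\lesssim \scal{z}^N|{\rm Im}(z)|^{-N_0}$) and by commuting $(P/\lambda+1)^{-N}$ to the rightmost position using Proposition~\ref{bornessup}; absolute $z$-convergence is ensured by $|\bar\partial\tilde f(z)|=O(|{\rm Im}(z)|^M)$ for arbitrary $M$.

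For the second summand, the vanishing identity yields
$$ (A_\mu^\lambda+i)^k f(P/\lambda)(1-\tilde\varrho_\lambda) = i^k(1-\tilde\varrho_\lambda)f(P/\lambda) + (A_\mu^\lambda+i)^k[f(P/\lambda),1-\tilde\varrho_\lambda]. $$
Writing $f(P/\lambda) = g_N(P/\lambda)(P/\lambda+1)^{-N}$ with $g_N(t):=(1+t)^N f(t)\in C_0^\infty(\Ra)$ and commuting $(P/\lambda+1)^{-N}$ past $(1-\tilde\varrho_\lambda)g_N(P/\lambda)$ via Proposition~\ref{bornessup} represents the first piece as $B_\lambda^{(2)}(P/\lambda+1)^{-N}$. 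The commutator is expanded through Helffer--Sj\"ostrand,
$$[f(P/\lambda),1-\tilde\varrho_\lambda] = -\pi^{-1}\int\bar\partial\tilde f(z)(P/\lambda-z)^{-1}[P/\lambda,\tilde\varrho_\lambda](P/\lambda-z)^{-1}L(dz),$$
and applying the parametrix to each resolvent factor reduces it, after the same spectral-calculus repositioning, to the sum of a rescaled PDO (absorbed into $\psi_{k,\lambda}$) and a term $B_\lambda^{(3)}(P/\lambda+1)^{-N}$. Collecting contributions gives the stated decomposition with $B_\lambda=B_\lambda^{(1)}+B_\lambda^{(2)}+B_\lambda^{(3)}$ a bounded family on $L^2$.

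The principal obstacle is the bookkeeping: uniformity in $\lambda\in(0,\lambda_0]$ together with polynomial control in $|{\rm Im}(z)|^{-1}$ must be propagated through every step, so that all Helffer--Sj\"ostrand integrations converge and every repositioning of $(P/\lambda+1)^{-N}$ via Proposition~\ref{bornessup} produces only uniformly bounded corrections.
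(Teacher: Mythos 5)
Your proposal diverges structurally from the paper's proof, and the divergence creates a genuine gap at the key step controlling the parametrix remainder.

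The paper's proof starts from the operator identity
$(A_\mu^\lambda+i)^k = i^k + \big(e^{i\tau A}a_{k,\lambda}(x,D)e^{-i\tau A}\big)\varrho(\lambda^{\frac12}x)$,
placing the cutoff $\varrho_\lambda$ \emph{between} $(A_\mu^\lambda+i)^k$ and $f(P/\lambda)$. It then applies Proposition~\ref{lemmparametrix} to $\varrho_\lambda(P/\lambda-z)^{-1}$, whose remainder is
$e^{i\tau A}r_{\lambda,z}(x,D)e^{-i\tau A}(P/\lambda-z)^{-N}$
with the resolvent sitting on the \emph{right} of the decaying symbol $r_{\lambda,z}\in S^{-N,-N}$. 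Consequently $(A_\mu^\lambda+i)^k$ composes directly with $e^{i\tau A}r_{\lambda,z}(x,D)e^{-i\tau A}$, the product of symbols $(a_\lambda+i)^k\#r_{\lambda,z}\in S^{k-N,k-N}$ is bounded for $N\geq k$, and $(P/\lambda-z)^{-N}(P/\lambda+1)^N(P/\lambda+1)^{-N}$ peels off harmlessly. The $|x|^k|\xi|^k$ growth in $(A_\mu^\lambda+i)^k$ is absorbed \emph{by direct pseudodifferential composition} with the decaying symbol.

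You instead place $\tilde\varrho_\lambda$ to the \emph{right} of $f(P/\lambda)$, and accordingly take the adjoint of Proposition~\ref{lemmparametrix} for $(P/\lambda-z)^{-1}\varrho_\lambda$. But this switches the side of the remainder resolvent: you obtain
$(P/\lambda-z)^{-N_0}e^{i\tau A}\tilde r_{\lambda,z}(x,D)e^{-i\tau A}$,
so that $(A_\mu^\lambda+i)^k$ now sits against $(P/\lambda-z)^{-N_0}$, \emph{not} against the decaying symbol. Since $(P/\lambda-z)^{-N_0}$ has no spatial decay, the composition $(A_\mu^\lambda+i)^k(P/\lambda-z)^{-N_0}$ still grows like $\scal{x}^k$ and is \emph{not} uniformly bounded on $L^2$ no matter how large $N_0$ is; the compensating decay of $\tilde r_{\lambda,z}$ is unreachable across the resolvent. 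Writing $(P/\lambda-z)^{-N_0}=h_z(P/\lambda)(P/\lambda+1)^{-N}$ does not help, because the unbounded growth of $(A_\mu^\lambda+i)^k$ in $x$ is not absorbed by $h_z(P/\lambda)$ either. Invoking Proposition~\ref{bornessup} for "commuting $(P/\lambda+1)^{-N}$ to the rightmost position" does not address this: Proposition~\ref{bornessup} controls iterated commutators $\mathrm{ad}_{iA_\mu^\lambda}^j(P_V/\lambda)$, which would be relevant for pushing $(A_\mu^\lambda+i)^k$ \emph{past} the resolvent so as to hit $\tilde r_{\lambda,z}(x,D)$, but carrying out that commutation amounts to re-deriving essentially the full Jensen--Mourre--Perry machinery and is far from the one-line statement you give. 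The same difficulty recurs in your commutator term $(A_\mu^\lambda+i)^k(P/\lambda-z)^{-1}[P/\lambda,\tilde\varrho_\lambda](P/\lambda-z)^{-1}$, where the outer resolvent again separates $(A_\mu^\lambda+i)^k$ from the spatially localized operator $[P/\lambda,\tilde\varrho_\lambda]$.

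To fix this, keep the cutoff on the \emph{left} of $f(P/\lambda)$: observe (as the paper does, using that $a_\lambda(x,D)$ is a first-order differential operator with coefficients vanishing near $x=0$, so $a_\lambda(x,D)\varrho(x)=a_\lambda(x,D)$ for suitable $\varrho$) that $(A_\mu^\lambda+i)^k - i^k = \big(e^{i\tau A}a_{k,\lambda}(x,D)e^{-i\tau A}\big)\varrho_\lambda$; then apply Proposition~\ref{lemmparametrix} directly to $\varrho_\lambda(P/\lambda-z)^{-1}$, and the remainder resolvent falls on the right, leaving $(A_\mu^\lambda+i)^k$ to compose with the decaying symbol. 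The $i^k f(P/\lambda)$ term is trivially of the form $B_\lambda(P/\lambda+1)^{-N}$ since $f$ has compact support.
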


\noindent {\it Proof.}  Observe first that, for some $ \varrho \in C^{\infty} (\Ra^n) $ equal to $0$ near $0$ and to $1$ near infinity, we have
$$ (A_{\mu}^{\lambda} + i)^k = i^k + \Big( e^{i \tau A}  a_{k,\lambda} (x,D)  e^{-i \tau A} \Big) \varrho \big( \lambda^{\frac{1}{2}} x\big) $$
with $ \big( a_{k,\lambda} \big)_{\lambda \in (0,\lambda_0]} $ bounded in $ S^{k,k} $ (use (\ref{pourlapreuve})). On the other hand, applying the Helffer-Sj\"ostrand formula (\ref{HelfferSjostrand}) to the parametrix obtained in Lemma~\ref{lemmparametrix}, we find that
$$ \varrho \big( \lambda^{\frac{1}{2}}  x\big) f (P/\lambda) = \Big( e^{i \tau A} \theta_{\lambda} (x,D) e^{-i \tau A} \Big) \widetilde{\rho} (\lambda^{\frac{1}{2}}x) + R_{\lambda} (P/\lambda + 1)^{-N}  $$
where $ (\theta_{\lambda})_{\lambda \in (0,\lambda_0]} $ is a bounded family of $ S^{-\infty,0} $ (it is compactly supported in $ \xi $) and 
$$ R_{\lambda} = \frac{1}{\pi} \int \bar{\partial} \tilde{f}(z)  \Big( e^{i\tau A} r_{\lambda,z} (x,D) e^{-i \tau A} \Big)   \Big( (P/ \lambda - z)^{-N} (P/\lambda + 1)^N \Big) L (dz)  $$
with $ r_{\lambda,z}\in S^{-N,N} $ as in Lemma~\ref{lemmparametrix}. In particular, $ a_{k,\lambda} (x,D) r_{\lambda,z}(x,D) $ has a symbol in $ S^{k-N,k-N} $ bounded in $ \lambda $ and with polynomial growth in $ 1 / |{\rm Im}(z)| $ ($z \in \mbox{supp}(\tilde{f})$). Thus, if $ N \geq k $,
$$ \Big( e^{i \tau A}  a_{k,\lambda} (x,D)  e^{-i \tau A} \Big)  R_{\lambda} $$
is bounded on $ L^2 $, uniformly in $ \lambda $. The result follows since $ a_{k,\lambda}(x,D)\theta_{\lambda}(x,D) =: \psi_{k,\lambda}(x,D) $ has a symbol in $ S^{-\infty,k} $  uniformly bounded in $ \lambda$. \finpreuve

\noindent {\bf Proof of Proposition~\ref{calculfonctionneldilatation}.} By Lemma~\ref{presquetout}, $ \big(A_{\mu}^{\lambda} + i \big)^k f (P/\lambda) \scal{x}^{-\nu} $ can be written,  for any fixed $N \geq 1$ (large in the application below), as the following sum
\begin{equation}
 \Big( e^{i \tau A} \psi_{k,\lambda} (x,D) \scal{x}^{-\nu} e^{-i\tau A} \Big)  \tilde{\varrho} (\lambda^{\frac{1}{2}} x)  \left( \frac{\scal{\lambda^{\frac{1}{2}}x} }{\scal{x}} \right)^{\nu} + B_{\lambda} (P/ \lambda + 1)^{-N} \scal{x}^{-\nu}  \label{decompositionproduit}
\end{equation}
where  $ \psi_{k,\lambda} (x,D) \scal{x}^{-\nu} = \psi_{\lambda} (x,D) $ for some bounded family $ (\psi_{\lambda})_{0 < \lambda  \lesssim 1}  $ of $ S^{-\infty,k-\nu} \subset S^{0,0} $. To get (\ref{decompositionproduit}), we have used  that 
$$ e^{-i\tau A} \tilde{\varrho} (\lambda^{\frac{1}{2}}x) \scal{x}^{- \nu} = \scal{x}^{-\nu} e^{-i\tau A} \tilde{\varrho} (\lambda^{\frac{1}{2}} x)  \left( \frac{\scal{\lambda^{\frac{1}{2}}x} }{\scal{x}} \right)^{\nu} . $$
 Using on one hand the Calder\'on-Vaillancourt Theorem, we have
$$  || \psi_{\lambda} (x,D) ||_{L^2 \rightarrow L^2} \leq C, \qquad  0 <\lambda \lesssim 1 , $$
and on the other hand that $ \scal{\lambda^{\frac{1}{2}}x} \leq C |\lambda^{\frac{1}{2}} x| $ on the support of $ \tilde{\varrho} (\lambda^{\frac{1}{2}}x) $, we have
$$ \Big\|  \tilde{\varrho} (\lambda^{\frac{1}{2}} x)  \left( \frac{\scal{\lambda^{\frac{1}{2}}x} }{\scal{x}} \right)^{\nu} \Big\| \leq C \lambda^{\frac{\nu}{2}}   $$
we conclude that the first term of (\ref{decompositionproduit}) has an operator norm bounded by $ C \lambda^{\frac{\nu}{2}} $ hence by $ C \lambda^s $. To be complete, we point out that we also used that
$ || e^{\pm i \tau A} ||_{L^2 (\Ra^n) \rightarrow L^2(\Ra^n)}  \leq C $.
The contribution of the second term of  (\ref{decompositionproduit}) follows from Lemma~\ref{lemmheatHardy} thanks to which
$$  \Big\| (P/\lambda + 1)^{-N} \scal{x}^{-\nu} \Big\| \leq  C \lambda^{ s }  $$
provided $N$ is large enough. The result follows.
 \finpreuve


 We obtain the following spectrally localized resolvent estimates.
 
 \begin{theo} \label{resolvantetreslocalisee} Let $ n \geq 2 $,  $ f \in C_0^{\infty} (\Ra) $, $ k \in \Na $ and $ \lambda_0 > 0 $. If    if $ \nu \geq k $ and $ s \in [0, \frac{n}{4}] $ are such that $ \nu > 2s $, then
 $$ \Big\| \scal{ x}^{- \nu} f (P / \lambda)\big( P - \lambda \pm i 0 \big)^{-k} \scal{ x}^{- \nu} \Big\|  \leq C \lambda^{2 s - k } $$
 for all $ \lambda \in (0,\lambda_0 ] $. In particular, if $ \nu > \frac{n}{2} $ and $ \nu \geq k $, then
  $$ \Big\| \scal{ x}^{- \nu} f (P / \lambda)\big( P - \lambda \pm i 0 \big)^{-k} \scal{ x}^{- \nu} \Big\|  \leq C \lambda^{\frac{n}{2} - k } . $$
 \end{theo}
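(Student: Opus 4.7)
The plan is to scale out the factor of $\lambda$, then sandwich the resolvent $(P/\lambda - 1 \pm i0)^{-k}$ between suitable powers of $A_\mu^\lambda \pm i$ in order to apply the Mourre-type bound of Theorem~\ref{TheoremeJMPlambda}, and finally absorb the physical weights $\scal{x}^{-\nu}$ using Proposition~\ref{calculfonctionneldilatation}. Concretely, I would first write
\begin{equation*}
\scal{x}^{-\nu} f(P/\lambda)(P-\lambda \pm i0)^{-k}\scal{x}^{-\nu} = \lambda^{-k} \scal{x}^{-\nu} f(P/\lambda)(P/\lambda - 1 \pm i0)^{-k}\scal{x}^{-\nu},
\end{equation*}
so the analysis takes place at energy~$1$ for the rescaled operator.

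Next I would pick $\tilde f \in C_0^\infty(\Ra)$ equal to $1$ on a neighborhood of $\mathrm{supp}(f)$, so that $f(P/\lambda) = f(P/\lambda)\tilde f(P/\lambda)$ and $\tilde f(P/\lambda)$ commutes with the resolvent. Pushing $\tilde f(P/\lambda)$ to the right of the resolvent and inserting $(A_\mu^\lambda - i)^k(A_\mu^\lambda - i)^{-k}$ just before, and $(A_\mu^\lambda + i)^{-k}(A_\mu^\lambda + i)^k$ just after the resolvent, I would decompose the operator of interest as the product of three pieces:
\begin{equation*}
\bigl[\scal{x}^{-\nu} f(P/\lambda)(A_\mu^\lambda - i)^k\bigr]\cdot\bigl[(A_\mu^\lambda - i)^{-k}(P/\lambda - 1 \pm i0)^{-k}(A_\mu^\lambda + i)^{-k}\bigr]\cdot\bigl[(A_\mu^\lambda + i)^k\tilde f(P/\lambda)\scal{x}^{-\nu}\bigr].
\end{equation*}
By Theorem~\ref{TheoremeJMPlambda} the middle factor has norm $\leq C$, uniformly in $\lambda \in (0,\lambda_0]$. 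By Proposition~\ref{calculfonctionneldilatation} applied to $\tilde f$ (using $\nu \geq k$, $s\in[0,n/4]$ and $\nu > 2s$), the right-hand factor has norm $\leq C\lambda^s$. The left-hand factor is the adjoint of $(A_\mu^\lambda + i)^k f(P/\lambda)\scal{x}^{-\nu}$, because $A_\mu^\lambda$ is self-adjoint and $f$ is real-valued; so Proposition~\ref{calculfonctionneldilatation} also bounds it by $C\lambda^s$. Multiplying gives $\lambda^{2s}$, and combining with the prefactor $\lambda^{-k}$ yields the claimed bound $\lambda^{2s-k}$.

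The second statement of the theorem then follows by choosing $s = n/4$, which lies in $[0,n/4]$ and for which $\nu > n/2 = 2s$ is exactly the hypothesis. The only delicate technical point I anticipate is justifying the insertions of $(A_\mu^\lambda \pm i)^{\pm k}$ on the appropriate domains (so that the three-factor product is well-defined and equals the original expression). This is precisely what is provided by the density of $\mathcal{D}$ in $D(P_V)\cap D(A_\mu^\lambda)$ discussed after Proposition~\ref{bornessup}: one proves the identity on the dense subspace $\mathcal{D}$ and extends it by density, using that each of the three factors extends to a bounded operator on $L^2$ by the very estimates being combined.
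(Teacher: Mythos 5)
Your proof is correct and follows essentially the same route as the paper's: rescale by $\lambda^{-k}$, insert a second cutoff $\tilde f$ (the paper calls it $f_1$) equal to $1$ near $\mathrm{supp}(f)$, sandwich the resolvent between $(A_\mu^\lambda\mp i)^{\mp k}$ to invoke Theorem~\ref{TheoremeJMPlambda}, and absorb the physical weights via Proposition~\ref{calculfonctionneldilatation} on each side; the only minor point is that you need not assume $f$ real-valued — for complex $f$ the left factor is the adjoint of $(A_\mu^\lambda+i)^k\bar f(P/\lambda)\scal{x}^{-\nu}$, which is handled by the same Proposition~\ref{calculfonctionneldilatation} applied to $\bar f$.
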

 Note that in Theorem~\ref{resolvantetreslocalisee}, there is no distinction between the cases $n$ odd and $n$ even. This is due to the strong spectral localization $ f (P/\lambda) $. The logarithmic divergence in even dimensions is displayed in Theorem~\ref{aveclog} below where the spectral localization $F (P) $ is much weaker.

 \noindent {\it Proof of Theorem~\ref{resolvantetreslocalisee}.} It follows by writing
 $$ f (P/\lambda) (P - \lambda \pm i 0)^{-k} = \lambda^{-k} f (P/\lambda) \big( P / \lambda - 1 \pm i 0 \big)^{-k} $$ 
 and 
 \begin{equation*}
 f_1 (P / \lambda) \scal{x}^{-\nu}  =   ( A_{\mu}^{\lambda} + i )^{-k} \Big( (A_{\mu}^{\lambda} + i)^k f_1 (P/\lambda) \scal{x}^{-\nu} \Big)
 \end{equation*} 
 with $ f_1 \in C_{0}^{\infty} (\Ra) $ equal to $1$ near $ \mbox{supp}(f) $, and then to combine Theorem~\ref{TheoremeJMPlambda}, which holds since Assumption~\ref{conditioncruciale} is satisfied when $V \equiv 0 $ (Proposition~\ref{compactness}) together with Proposition~\ref{calculfonctionneldilatation}. \finpreuve


The proof of Theorem~\ref{mesurespectrale} is then a simple consequence of the above one.

 \noindent {\bf Proof of Theorem~\ref{mesurespectrale}.} This is a direct consequence of Theorem~\ref{resolvantetreslocalisee} together with the fact that
 $$ \frac{d^j}{d \lambda^{j}} (P-\lambda \pm i 0)^{-1} = j ! (P-\lambda \pm i 0)^{-1-j} $$
 and the observation that, whenever $f \in C_0^{\infty}(0,+\infty) $ is equal to $1$ near $1$,
 $$  (P-\lambda - i 0)^{-1-j} - (P-\lambda + i 0)^{-1-j} = f (P/\lambda) \Big(   (P-\lambda - i 0)^{-1-j} - (P-\lambda + i 0)^{-1-j} \Big) . $$
 \finpreuve

 We next consider resolvent estimates. The following intermediate result will lead to Theorem~\ref{theoresolventeintro}.
 
 \begin{prop} \label{aveclog} Let $ \lambda_0 > 0 $.  There is $ F \in C_0^{\infty} (\Ra) $  equal to $1$ near $ [0,\lambda_0] $ such that for each integer $ k \geq 1$ and $ \nu > k $, one has 
 $$ \Big\| \scal{ x}^{-\nu} F (P )\big( P - \lambda \pm i 0 \big)^{-k} \scal{x}^{- \nu} \Big\|  \leq C \lambda^{\min \{ 0 , \frac{n}{2} - k \}} $$
 unless $ k = \frac{n}{2} $ (i.e. $n$  even and $ k = \frac{n}{2} $) in which case
  $$ \Big\| \scal{ x}^{- \nu} F (P )\big( P - \lambda \pm i 0 \big)^{-\frac{n}{2}} \scal{x}^{-\nu} \Big\|  \leq C |\log \lambda| , $$
  for  all $ \lambda \in (0,\lambda_0 ] $.
 \end{prop}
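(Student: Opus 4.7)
My plan is to reduce the statement to the already proved Theorem~\ref{resolvantetreslocalisee} by means of a Littlewood--Paley partition of unity in the spectral variable, performed in units of $\lambda$. Fix once and for all $\chi \in C_0^{\infty}((0,\infty))$ supported in $[1/2, 2]$ with $\sum_{j \in \Za} \chi(2^{-j}s) = 1$ for $s > 0$, and set $\chi_j(s) := \chi(2^{-j}s)$, $\mu_j := 2^j \lambda$. Choose $F \in C_0^{\infty}(\Ra)$ with $F \equiv 1$ near $[0, \lambda_0]$. Since $F$ is compactly supported, the spectral calculus gives
$$ F(P)(P - \lambda \pm i 0)^{-k} \ = \ \sum_{j \leq N(\lambda)} F(P)\,\chi_j(P/\lambda)\,(P - \lambda \pm i 0)^{-k}, \qquad N(\lambda) \sim \log_2(\lambda_0/\lambda). $$
I fix $j_0$ large enough that $|u - 2^{-j}|\geq c>0$ on $\mathrm{supp}(\chi)$ for $|j|\geq j_0+1$, and bundle the finitely many ``resonant'' bands $|j|\leq j_0$ into a single term $f(P/\lambda)$ with $f\in C_0^\infty((0,\infty))$ equal to $1$ near $1$; Theorem~\ref{resolvantetreslocalisee} then bounds the resonant piece by $C\lambda^{2s-k}$ for any $s\in [0,n/4]$ with $\nu > 2s$.

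For each nonresonant band $|j|\geq j_0+1$, I would write
$$ \chi_j(P/\lambda)(P-\lambda \pm i 0)^{-k} \ =\ \mu_j^{-k}\, h_j(P/\mu_j), \qquad h_j(u)\ :=\ \frac{\chi(u)}{(u-2^{-j})^k}, $$
and note that $h_j\in C_0^{\infty}((0,\infty))$ with $\|h_j\|_{C^M}\lesssim 1$ uniformly for $j\geq j_0+1$, while $\|h_j\|_{C^M}\lesssim 2^{jk}$ for $j\leq -j_0-1$ (since $|u-2^{-j}|\sim 2^{-j}$ there). Factoring $h_j=g_j^{(1)}g_j^{(2)}$ with $g_j^{(\ell)}\in C_0^{\infty}((0,\infty))$ of comparable size, and writing $g_j^{(\ell)}(P/\mu_j) = \big(g_j^{(\ell)}(P/\mu_j)(P/\mu_j+1)^{\kappa}\big)\,(P/\mu_j+1)^{-\kappa}$ with the parenthesized factor uniformly $L^2$-bounded, Lemma~\ref{lemmheatHardy} yields
$$ \Big\| \scal{x}^{-\nu}\,h_j(P/\mu_j)\,\scal{x}^{-\nu}\Big\|\ \leq\ C\,\|h_j\|_\infty\,\mu_j^{2s} $$
for $s\in [0,n/4]$ with $\nu > 2s$. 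Hence the $j$-th band is bounded by $C(2^j\lambda)^{2s-k}$ when $j\geq j_0+1$ and by $C\, 2^{-2|j|s}\,\lambda^{2s-k}$ when $j\leq -j_0-1$.

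It remains to sum and optimize $s$. The bands $j\leq -j_0-1$ converge geometrically to $O(\lambda^{2s-k})$ for any $s>0$. The bands $j_0+1\leq j\leq N(\lambda)$ contribute $\lambda^{2s-k}\sum_j 2^{j(2s-k)}$, which is $O(1)$ when $2s>k$, of size $|\log\lambda|$ when $2s=k$, and of size $\lambda^{2s-k}$ when $2s<k$. Therefore: when $k>n/2$ the hypothesis $\nu > k>n/2$ lets me pick $s=n/4$, producing $\lambda^{n/2-k}$; when $k=n/2$ the choice $s=n/4=k/2$ is compatible with $\nu > k$ and yields the borderline $|\log\lambda|$; when $k<n/2$ any $s\in (k/2,\nu/2)\cap (k/2,n/4]$ (nonempty because $\nu > k$ and $k/2 < n/4$) gives $2s-k>0$ and hence a uniformly bounded contribution, matching $\lambda^{\min\{0,n/2-k\}}=1$. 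The main subtle step, to my mind, is establishing the uniform-in-$j$ weighted estimate on $h_j(P/\mu_j)$ via the Nash-type Lemma~\ref{lemmheatHardy}; the remainder of the argument is routine dyadic bookkeeping.
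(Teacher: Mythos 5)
Your proof is correct and follows essentially the same route as the paper: both decompose the spectrum into geometrically scaled bands between $\lambda$ and $O(1)$, bound each band using Lemma~\ref{lemmheatHardy} on the spectral cutoff factors and the spectral theorem on the inverse power of $(P-\lambda)$, handle the resonant scale $P\sim\lambda$ via Theorem~\ref{resolvantetreslocalisee}, and then sum a geometric series with the same three-way case distinction in $\mathrm{sgn}(2s-k)$. The only cosmetic difference is that you use a fixed dyadic Littlewood--Paley partition (with a separate infinite tail of sub-resonant bands $j\le -j_0-1$, summed geometrically), while the paper telescopes $F(P)-F(P/\lambda)$ into exactly $N\sim|\log\lambda|$ terms at a $\lambda$-dependent ratio $\Lambda=e^{\log\lambda^{-1}/N}$, which folds the whole low-energy region into the single term $F(P/\lambda)$.
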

 
 \noindent {\it Proof.} Let $ N = [\log \lambda^{-1}] $ be the integer part of $ \log \lambda^{-1} $ and let $ \Lambda = e^{ \frac{\log \lambda^{-1}}{N}}  \in [ e , e^2 ) $. Note that $ \Lambda^N = \lambda^{-1} $. Pick $F$ such that $ F = 1 $ near $ [0,e^2]  \cap [0,\lambda_0] $ and let
 $$ G (p) = F (p) - F (\Lambda p) , \qquad p \geq 0, $$
 so that $ G = 0 $ near $ [0,1] $ and $ G $ has  support contained in a compact subset independent of $ \lambda $. Moreover
 $$ F (P) = F (P / \lambda) + \sum_{\ell = 1}^N G ( \Lambda^{- \ell} P / \lambda ) . $$
 Pick next $ \tilde{G} \in C_0^{\infty} $  equal to $1$ on the support of $ G $, so that one has
 $$  (P - \lambda)^{-k} G (\Lambda^{-\ell} P \lambda) = \tilde{G} (\Lambda^{-\ell} P \lambda) \Big( \lambda^{-k} G (\Lambda^{-\ell} P /\lambda)  ( P / \lambda - 1 )^{-k} \Big) \tilde{G} (\lambda^{-\ell} P / \lambda)   .$$
 Let us next choose $ s \in [0,\frac{n}{4}] $ such that
 $$ s = \frac{n}{4} \ \ \mbox{if} \ \ k \geq \frac{n}{2} \qquad \mbox{or} \qquad \nu > 2 s > k \ \ \mbox{if} \ \ k < \frac{n}{2} . $$
  This choice ensures that, by Lemma~\ref{lemmheatHardy}, 
  $$ \Big\| \scal{x}^{-\nu} \tilde{G} (\Lambda^{-\ell} P / \lambda) \Big\| +  \Big\| \tilde{G} (\Lambda^{-\ell} P / \lambda) \scal{x}^{-\nu} \Big\|  = O (\lambda^{s} \Lambda^{ s \ell} ) . $$
On the other hand, the spectral theorem yields 
  $$ \Big\|  \lambda^{-k} G (\Lambda^{-\ell} P /\lambda)  ( P / \lambda - 1 )^{-k} \Big\|  \leq C \lambda^{-k} \Lambda^{-\ell k} $$
 since $ P / \lambda $ is of size $ \Lambda^{\ell} $ on  the support of $G$. Altogether, the above estimates and Theorem~\ref{resolvantetreslocalisee} (to treat the contribution of $ F (P/\lambda) $) imply that
  $$ \Big\| \scal{ x}^{-\nu} F (P )\big( P - \lambda \pm i 0 \big)^{-k} \scal{x}^{-\nu} \Big\|  \lesssim  \lambda^{2s - k} + \sum_{\ell = 1}^N \lambda^{2s - k}  \big( \Lambda^{2s - k} \big)^{\ell} $$
  where
  $$  \sum_{\ell = 1}^N \lambda^{2s - k}  \big( \Lambda^{2s - k} \big)^{\ell} \lesssim \begin{cases} \lambda^{2s-k}  (\Lambda^{2s-k} )^{N+1} \sim 1 & \mbox{if} \ \frac{n}{2} > k \\  N \sim |\log \lambda| & \mbox{if} \ \frac{n}{2} = k \\  \lambda^{\frac{n}{2} - k} & \mbox{if} \ \frac{n}{2} < k  \end{cases} . $$
  The result follows.
     \finpreuve

 \noindent {\bf Proof of Theorem~\ref{theoresolventeintro}.} Pick $F$ as in Proposition~\ref{aveclog}. Since $ (1 - F(P)) (P-\lambda)^{-k} $ is bounded on $ L^2 $ (uniformly in $ \lambda \in (0,\lambda_0] $) by the spectral theorem, we may replace $ (P - \lambda \pm i 0)^{-k} $ by $ F (P)  (P - \lambda \pm i 0)^{-k} $. The conclusion then simply follows from Proposition~\ref{aveclog}.
  \finpreuve 
 

\section{Time decay estimates} \label{sectiontime}
\setcounter{equation}{0}
\begin{defi} Let $ \varepsilon > 0 $ and $ s \in \Ra $. The space $ {\mathcal H}^s (\varepsilon) $ is the set of smooth functions $a$ on $ (0,\varepsilon) $ such that for each integer $k \geq 0 $
$$ \big| a^{(k)} (\lambda) \big| \leq C \lambda^{s-k}, \qquad \lambda \in (0,\varepsilon) . $$
\end{defi}
Let us summarize some basic useful properties of such spaces.
\begin{prop}  \label{propsimple} 
\begin{enumerate}
\item{ $ \lambda^s \in {\mathcal H}^s (\varepsilon) $.}
\item{If $ a_1 \in {\mathcal H}^{s_1} (\varepsilon) $ and $ a_2 \in {\mathcal H}^{s_2} (\varepsilon) $ then $ a_1 a_2 \in {\mathcal H}^{s_1+s_2} (\varepsilon) $.}
\item{If $ a \in {\mathcal H}^s (\varepsilon) $ and $ k \in \Na $ then $ a^{(k)} \in {\mathcal H}^{s-k} (\varepsilon) $.}
\item{Let $ s > 0 $ be not an integer. Let $ [s] $ be its integer part. Then any $ a \in {\mathcal H}^s (\varepsilon) $ continued by $ 0 $ at $ 0 $ belongs to $ C^{[s]} ([0,\varepsilon ) ) $ and
$$ a (0) = \cdots = a^{([s])}(0) = 0 . $$
If $ s \geq 1 $ is an integer, then any $a \in {\mathcal H}^s (\varepsilon) $ continued by $0$ at $0$ belongs to $ C^{s-1}([0,\varepsilon) ) $ and satisfies
$$ a (0) = \cdots = a^{(s-1)}(0) = 0 . $$ }
\item{Let $ \phi : (0,\varepsilon) \rightarrow (0,\delta) $ be a diffeomorphism such that 
for some $ \kappa > 0 $ and all  $ j \in \Na $
$$ \phi (\lambda) \sim \lambda^{\kappa} \ \ \ \mbox{as} \ \ \ \lambda \rightarrow 0 \qquad \mbox{and} \qquad \big| \phi^{(j)}(\lambda) \big| \leq C  \phi (\lambda) \lambda^{-j} . $$
Then
$$ a \in {\mathcal H}^s (\delta)  \qquad \Longrightarrow \qquad a \circ \phi \in {\mathcal H}^{\kappa s} (\varepsilon) . $$}
\end{enumerate}
\end{prop}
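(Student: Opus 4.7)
The plan is to dispatch items (1)--(3) as immediate consequences of the definition, then treat (4) and (5) as the substantive parts, via integration from $0$ and a Fa\`a di Bruno computation respectively.

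For (1), one just differentiates $\lambda^s$ explicitly and bounds the multinomial prefactor by a constant. Item (3) is tautological: the bound $|a^{(k+j)}(\lambda)| \leq C \lambda^{s-k-j}$ satisfied by $a$ is exactly the condition for $a^{(k)} \in \mathcal{H}^{s-k}(\varepsilon)$. For (2), one writes the Leibniz rule
$$(a_1 a_2)^{(k)} = \sum_{j=0}^k \binom{k}{j} a_1^{(j)} a_2^{(k-j)},$$
estimates each term by $C \lambda^{s_1 - j} \lambda^{s_2 - (k-j)} = C \lambda^{s_1 + s_2 - k}$ and sums.

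For (4), the key observation is that if $k \leq [s]$ then $s - k > 0$ (treating integer and non-integer $s$ uniformly by noting that $k \leq s - 1$ in the integer case), so $|a^{(k)}(\lambda)| \leq C \lambda^{s-k} \to 0$ as $\lambda \to 0^+$. Hence each derivative up to order $[s]$ (or $s-1$ in the integer case) extends continuously to $[0,\varepsilon)$ with value $0$ at $0$. A standard induction then shows that the continuous extension of $a^{(k)}$ is the derivative of the continuous extension of $a^{(k-1)}$: starting from $a^{(k-1)}(\lambda) - a^{(k-1)}(\delta) = \int_\delta^\lambda a^{(k)}(\mu) d\mu$ for $0 < \delta < \lambda$ and letting $\delta \to 0$, both sides have limits and the fundamental theorem of calculus applies to the extension.

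Item (5) is the main technical step and rests on Fa\`a di Bruno's formula
$$(a \circ \phi)^{(k)}(\lambda) = \sum_{\pi(k)} \frac{k!}{m_1! \cdots m_k!}\, a^{(m)}(\phi(\lambda)) \prod_{j=1}^{k} \left( \frac{\phi^{(j)}(\lambda)}{j!} \right)^{m_j},$$
where the sum runs over partitions $\pi(k) = \{(m_1,\dots,m_k)\,:\, \sum_j j\, m_j = k\}$ and $m = \sum_j m_j$. Using $|a^{(m)}(\phi(\lambda))| \leq C \phi(\lambda)^{s-m}$ together with $|\phi^{(j)}(\lambda)| \leq C\, \phi(\lambda)\, \lambda^{-j}$, each term is bounded by
$$\phi(\lambda)^{s - m} \prod_{j=1}^k \bigl( \phi(\lambda)\, \lambda^{-j} \bigr)^{m_j} \,=\, \phi(\lambda)^{\,s - m + \sum_j m_j} \, \lambda^{-\sum_j j m_j} \,=\, \phi(\lambda)^s\, \lambda^{-k},$$
since $\sum_j m_j = m$ and $\sum_j j m_j = k$. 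Invoking $\phi(\lambda) \sim \lambda^\kappa$ converts this to $C\, \lambda^{\kappa s - k}$, which is the required bound. The main obstacle is precisely this Fa\`a di Bruno bookkeeping: one must verify that the counting of $m$'s and $j m_j$'s telescopes exactly so that all explicit factors of $\phi$ recombine to $\phi^s$ and all factors of $\lambda^{-1}$ accumulate to $\lambda^{-k}$; once this exponent identity is in hand the rest of (5) is mechanical.
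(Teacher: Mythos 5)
Your proof is correct and proceeds along the same lines as the paper's: items (1)--(4) are the obvious calculations (Leibniz, integration from $0$), and item (5) is the Fa\`a di Bruno argument, with the exponent bookkeeping $\phi^{s-m}\prod_j(\phi\lambda^{-j})^{m_j}=\phi^s\lambda^{-k}\lesssim\lambda^{\kappa s-k}$ carried out exactly as in the paper (which uses a slightly different but equivalent presentation of the Fa\`a di Bruno formula). Your writeup is a bit more detailed but substantively identical.
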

\noindent {\it Proof.}  The items 1, 2, 3 and 4 are straightforward. The item 5 follows easily from the Fa\`a di Bruno formula saying that, for some coefficients $  c_{k j k_1 \cdots k_j} $ that are irrelevant here,
\begin{equation}
 ( a \circ \phi \big)^{(k)} = (\phi^{\prime} )^k a^{(k)} \circ \phi + \sum_{k_1 + \cdots + k_j = k } c_{k j k_1 \cdots k_j}\phi^{(k_1)} \cdots \phi^{(k_j)} a^{(j)} \circ \phi  \label{FaaDiBruno}
\end{equation}
where  $ 1 \leq j \leq k-1 $ and  $  k_1 , \ldots , k_j  \geq 1 $   (in particular, the sum is zero if $ k =1 $).\finpreuve

In the sequel, for $s > - 1 $ and $ a \in {\mathcal H}^s (\varepsilon) $, we let
$$ || a ||_{(s)} = \max_{ k \leq [s]+2}  \sup_{(0,\varepsilon)} \big| \lambda^{-s+k} a^{(k)} (\lambda) \big| $$
where $ [s] $ is the integer part of $s$.

\begin{prop} \label{propdecroissance} Let $ f \in C_0^{\infty} (\Ra) $ be supported in $ (-\infty , \varepsilon)$ and $ s > -1  $ be real. Then there is $ C > 0 $ such that
$$ \left| \int_0^{\infty} e^{ i t \lambda} a (\lambda) f (\lambda) d \lambda \right| \leq C ||a ||_{(s)} \scal{t}^{-s-1} $$
for all $t \in \Ra $ and all $ a \in {\mathcal H}^{s} (\varepsilon) $.
\end{prop}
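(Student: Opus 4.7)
\medskip

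\noindent\textbf{Proof plan for Proposition~\ref{propdecroissance}.} The strategy is a textbook oscillatory integral estimate, splitting according to whether $|t|$ is small or large and, in the large $|t|$ regime, splitting the integration interval at $\lambda = 1/|t|$ so that the singular behaviour at $\lambda = 0$ is handled directly while the regular part is handled by integration by parts.

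First, for bounded $|t|$ (say $|t|\leq 1$) the estimate is trivial: the integrand is controlled by $\|a\|_{(s)} \|f\|_{L^\infty} \lambda^s$ and, since $s > -1$ and $f$ is compactly supported, $\int_0^\varepsilon \lambda^s d\lambda < \infty$. Since $\scal{t}^{-s-1}$ is bounded away from zero when $|t|\leq 1$, this gives the result in this regime.

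Now assume $|t| \geq 1$ and, without loss of generality, $\varepsilon \leq 1$ (otherwise replace $\varepsilon$ by $\min(\varepsilon,1)$ after absorbing $f|_{[\varepsilon/2,\infty)}$ into a harmless piece handled by non-stationary phase of a smooth function). Split
$$
\int_0^\infty e^{it\lambda} a(\lambda) f(\lambda) d\lambda \ = \ I_1(t) + I_2(t), \qquad I_1(t) := \int_0^{1/|t|} e^{it\lambda} a(\lambda) f(\lambda) d\lambda, \quad I_2(t) := \int_{1/|t|}^{\infty} e^{it\lambda} a(\lambda) f(\lambda) d\lambda.
$$
For $I_1$, the trivial bound $|a(\lambda)f(\lambda)|\leq C\|a\|_{(s)}\lambda^s$ and integration against $d\lambda$ on $[0,1/|t|]$ give $|I_1(t)| \leq C \|a\|_{(s)} |t|^{-s-1}$, as desired.

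For $I_2$, set $N := [s]+2$; note that $s > -1$ guarantees $N\geq 1$, and $N > s+1$ since $[s]\leq s < [s]+1$. Integrate by parts $N$ times using $e^{it\lambda} = (it)^{-1} \partial_\lambda e^{it\lambda}$. The boundary terms at $+\infty$ vanish because $f$ is compactly supported, and one is left with boundary terms at $\lambda = 1/|t|$ plus one remaining integral:
\begin{equation*}
I_2(t) = \sum_{k=1}^{N} \frac{(-1)^{k}}{(it)^k} e^{i\,\mathrm{sgn}(t)} (af)^{(k-1)}\!\left(\tfrac{1}{|t|}\right) + \frac{(-1)^N}{(it)^N} \int_{1/|t|}^{\infty} e^{it\lambda} (af)^{(N)}(\lambda)\, d\lambda.
\end{equation*}
By Leibniz and the definition of $\|a\|_{(s)}$, together with $\lambda\leq \varepsilon\leq 1$, one has $|(af)^{(k)}(\lambda)| \leq C \|a\|_{(s)} \lambda^{s-k}$ for $0\leq k\leq N$. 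Each boundary term is thus bounded by $|t|^{-k}\cdot \|a\|_{(s)} (1/|t|)^{s-k+1} = \|a\|_{(s)} |t|^{-s-1}$. For the remaining integral,
$$
\frac{1}{|t|^N}\int_{1/|t|}^{\varepsilon} |(af)^{(N)}(\lambda)|\, d\lambda \ \leq \ \frac{C\|a\|_{(s)}}{|t|^N} \int_{1/|t|}^{\varepsilon} \lambda^{s-N} d\lambda \ \leq \ C' \|a\|_{(s)} |t|^{-s-1},
$$
where we used $N - s > 1$ so that $\int_{1/|t|}^{\varepsilon} \lambda^{s-N} d\lambda \sim |t|^{N-s-1}$. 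Combining $I_1$ and $I_2$ gives the claim.

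The only non-routine point is the choice $N = [s]+2$, which must be large enough to make the final integral converge with the right rate ($N > s+1$) while still allowing us to control $(af)^{(N)}$ by $\|a\|_{(s)}$ (i.e.\ $N \leq [s]+2$ since $\|a\|_{(s)}$ only controls derivatives up to that order). These two requirements match exactly, and this is the main (mild) obstacle to keep in mind while writing out the estimate.
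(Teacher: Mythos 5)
Your proof is correct and rests on the same two ingredients as the paper's: splitting the integral at $\lambda = |t|^{-1}$, bounding the near-zero piece trivially, and integrating by parts away from zero. The (mild) structural difference is that you integrate by parts $N=[s]+2$ times directly on $[|t|^{-1},\infty)$, collecting all boundary terms in one sum, whereas the paper first reduces $s>0$ to $s\in(-1,0]$ by integrating by parts over the entire half-line $(0,\infty)$ — which requires item~4 of Proposition~\ref{propsimple} to ensure the boundary contributions at $\lambda=0$ vanish — and only then splits at $|t|^{-1}$ and performs one or two further integrations by parts. Your organization thus avoids the vanishing-at-the-origin lemma altogether and treats all $s>-1$ with one formula, at the cost of slightly more bookkeeping with the boundary sum; both hinge identically on $N-s-1>0$ matching the number of controlled derivatives in $\|\cdot\|_{(s)}$. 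One incidental remark: the reduction to $\varepsilon\le 1$ is harmless but not needed — on $\{1\le\lambda\le\varepsilon\}$ the Leibniz bound gives $|(af)^{(N)}(\lambda)|\lesssim\|a\|_{(s)}\lambda^{s}$ instead of $\lambda^{s-N}$, and $\int_1^\varepsilon\lambda^{s}\,d\lambda\lesssim 1\lesssim |t|^{N-s-1}$ for $|t|\ge1$, so the estimate goes through unchanged.
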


\noindent {\it Proof.}  Let $b:= af $. Since $b$ is integrable, it suffices to prove the estimate for $ |t| \gg 1 $. 

\

\noindent {\bf Case }$ - 1 < s < 0 $: We write
\begin{equation*}
\begin{aligned}
 \int_0^{\infty} e^{ i t \lambda} b (\lambda)  d \lambda & =  \int_{0}^{|t|^{-1}} e^{it\lambda} b(\lambda) d \lambda - \frac{i}{t} \int_{|t|^{-1}}^{+\infty} \partial_{\lambda} \big( e^{i t \lambda} \big) b(\lambda) d \lambda \\
 & =   \int_{0}^{|t|^{-1}} e^{it\lambda} b(\lambda) d \lambda  + \frac{i}{t} \left( e^{it|t|^{-1}} b(|t|^{-1}) +  \int_{|t|^{-1}}^{+\infty} e^{i t \lambda} b^{\prime}(\lambda) d \lambda \right) \\
 & =   O(||a||_{(s)}) \left\{ \int_0^{|t|^{-1}} \lambda^s d \lambda + \frac{1}{|t|} \left( ( |t|^{-1})^s +   \int_{|t|^{-1}}^{\infty} \lambda^{-s-1} d \lambda \right) \right\}
 \end{aligned}  
\end{equation*}
 which yields the result for the last line is obviously $ {\mathcal O} (|t|^{-s-1}||a||_{(s)}) $.
 
 \
 
 \noindent {\bf Case} $s = 0$: In this case, we write
 \begin{equation*}
 \begin{aligned}
\int_{0}^{\infty} e^{it\lambda} b (\lambda) d \lambda & =   \int_{0}^{|t|^{-1}} e^{it \lambda} b (\lambda) d \lambda  - \frac{1}{t^2}\int_{|t|^{-1}}^{\infty} \partial_{\lambda}^2 \big( e^{it \lambda}\big) b (\lambda) d\lambda \\
& =  O (||b||_{\infty} |t|^{-1} ) + \frac{e^{it|t|^{-1}}}{t^2}\Big( i t b (|t|^{-1}) - b^{\prime}(|t|^{-1}) \Big) - \frac{1}{t^2}\int_{|t|^{-1}}^{\infty} e^{i t \lambda} b^{\prime \prime}(\lambda) d \lambda \\
& =  O \big( ||a||_{(s)} \big) \left\{ |t|^{-1} + \frac{1}{t^2} \left( |t| + \int_{|t|^{-1}}^{+\infty} \lambda^{-2} d \lambda  \right)   \right\} 
\end{aligned}\end{equation*}
which is $ O ( |t|^{-1}||a||_{(s)}) $.

\

 \noindent {\bf Case} $ s > 0 $: We let $ k = [s] +1 $ if $ s \notin \Na $ and $ k = s$ in $ s \in \Na $. Then we write
 $$ t^{k} \int e^{it \lambda} b (\lambda) d \lambda = i^{k}  \int_0^{\infty} e^{it \lambda} b^{ (k)} (\lambda) d \lambda $$ 
 using the item 4 of Proposition~\ref{propsimple}. We are then reduced to the previous cases since $ b^{(k)} $ belongs to $ {\mathcal H}^{(s-k)}(\varepsilon)$ with $ s - k \in (-1,0] $. 
 \finpreuve




\begin{coro} \label{corollairedecroissance}  Let $ \varepsilon > 0 $ and $ f \in C_0^{\infty} (\Ra) $ be supported in $ (-\infty, \varepsilon) $.
 There exists $ C > 0 $ such that for all $ a \in {\mathcal H}^{\frac{n}{2}-1} (\varepsilon) $, one has
  \begin{itemize}
   \item{(Schr\"odinger decay)
   $$ \left| \int_0^{\infty} e^{it\lambda} a (\lambda) f (\lambda) d \lambda \right| \leq C ||a||_{(\frac{n}{2}-1)} \langle t \rangle^{-\frac{n}{2}} . $$}
 \item{(Wave decay:) Let $ \sigma \in [0,1] $ and  $ \phi (\lambda) = \lambda^2 $. Then
 $$ \left| \int_0^{\infty} e^{it \sqrt{\lambda}} \lambda^{-\frac{\sigma}{2}} a (\lambda) f (\lambda) d \lambda \right| \leq C ||a \circ \phi ||_{(n-2)} \langle t \rangle^{\sigma- n} . $$}
 \item{(Klein-Gordon decay)
 $$ \left| \int_0^{\infty} e^{it \sqrt{\lambda+1}} a (\lambda) f (\lambda) d \lambda \right| \leq C ||a||_{(\frac{n}{2}-1)} \langle t \rangle^{-\frac{n}{2}} . $$}
 \end{itemize}
\end{coro}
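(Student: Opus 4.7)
All three bounds reduce to Proposition~\ref{propdecroissance} after performing a change of variable to linearize the phase, the regularity of the new amplitude being read off from Proposition~\ref{propsimple}.

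For the Schr\"odinger bound the phase is already linear, so I would apply Proposition~\ref{propdecroissance} directly with $s=\tfrac{n}{2}-1>-1$, obtaining $\langle t\rangle^{-\frac{n}{2}}$ immediately.

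For the wave bound I would set $\mu=\sqrt{\lambda}$, i.e.\ $\lambda=\phi(\mu)$ with $\phi(\mu)=\mu^{2}$, so that $d\lambda=2\mu\,d\mu$ and
\[
\int_{0}^{\infty} e^{it\sqrt{\lambda}}\,\lambda^{-\frac{\sigma}{2}}\,a(\lambda)\,f(\lambda)\,d\lambda
\;=\; 2\int_{0}^{\infty} e^{it\mu}\,\mu^{\,1-\sigma}\,(a\circ\phi)(\mu)\,(f\circ\phi)(\mu)\,d\mu .
\]
By items~1 and~2 of Proposition~\ref{propsimple}, $\mu^{1-\sigma}\in\mathcal{H}^{1-\sigma}(\sqrt{\varepsilon})$, and by hypothesis $a\circ\phi\in\mathcal{H}^{n-2}(\sqrt{\varepsilon})$, so the product lies in $\mathcal{H}^{n-1-\sigma}(\sqrt{\varepsilon})$. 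Since $\sigma\in[0,1]$ and $n\geq 2$ yield $n-1-\sigma\geq 0>-1$, Proposition~\ref{propdecroissance} applied with cutoff $f\circ\phi$ (compactly supported in $(-\infty,\sqrt{\varepsilon})$) produces the decay $\langle t\rangle^{-(n-1-\sigma)-1}=\langle t\rangle^{\sigma-n}$, with the constant controlled by $\|a\circ\phi\|_{(n-2)}$.

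For the Klein--Gordon bound I would factor out the unit-modulus phase $e^{it}$ and substitute $\mu=\sqrt{\lambda+1}-1$, i.e.\ $\lambda=\psi(\mu):=\mu^{2}+2\mu$, so that $d\lambda=(2\mu+2)\,d\mu$ and
\[
\int_{0}^{\infty} e^{it\sqrt{\lambda+1}}\,a(\lambda)\,f(\lambda)\,d\lambda
\;=\; e^{it}\int_{0}^{\infty} e^{it\mu}\,(a\circ\psi)(\mu)\,(f\circ\psi)(\mu)\,(2\mu+2)\,d\mu .
\]
The diffeomorphism $\psi$ satisfies $\psi(\mu)\sim 2\mu$ as $\mu\to 0$ and $|\psi^{(j)}(\mu)|\lesssim\psi(\mu)\mu^{-j}$ for every $j$ (check $j=0,1,2$ directly; $j\geq 3$ is trivial since $\psi^{(j)}\equiv 0$). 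Hence item~5 of Proposition~\ref{propsimple} applies with $\kappa=1$ and gives $a\circ\psi\in\mathcal{H}^{\frac{n}{2}-1}$ together with the norm control $\|a\circ\psi\|_{(\frac{n}{2}-1)}\lesssim\|a\|_{(\frac{n}{2}-1)}$. The smooth bounded factor $2\mu+2$ is in $\mathcal{H}^{0}$, so by item~2 the full amplitude belongs to $\mathcal{H}^{\frac{n}{2}-1}$, and Proposition~\ref{propdecroissance} delivers $\langle t\rangle^{-\frac{n}{2}}$.

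There is no substantive analytic obstacle here: the stationary-phase/integration-by-parts content has already been packaged into Proposition~\ref{propdecroissance}, and the only care needed is the bookkeeping in the change of variables. The step I expect to be most pedestrian yet most error-prone is the verification of the hypotheses of Proposition~\ref{propsimple}(5) for the Klein--Gordon diffeomorphism $\psi$, and the verification that the Fa\`a di Bruno sum~\eqref{FaaDiBruno} yields control of $\|a\circ\psi\|_{(\frac{n}{2}-1)}$ by $\|a\|_{(\frac{n}{2}-1)}$ with a constant independent of $a$.
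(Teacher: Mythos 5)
Your proof is correct and follows essentially the same route as the paper's: the Schr\"odinger case is a direct application of Proposition~\ref{propdecroissance}; the wave case uses the substitution $\lambda=\theta^2$ with the amplitude $\theta^{1-\sigma}a(\theta^2)\in\mathcal{H}^{n-1-\sigma}$; and the Klein--Gordon case factors out $e^{it}$ and changes variable via $\theta=\sqrt{\lambda+1}-1$, whose inverse $(\theta+1)^2-1$ (your $\psi$) is checked to satisfy item~5 of Proposition~\ref{propsimple} with $\kappa=1$, with the Jacobian $2\theta+2$ absorbed as an $\mathcal{H}^0$ factor. The norm control $\|a\circ\psi\|_{(\frac n2-1)}\lesssim\|a\|_{(\frac n2-1)}$ that you flag as the delicate point is exactly what the paper relies on, and it follows from the Fa\`a di Bruno formula \eqref{FaaDiBruno} underlying item~5.
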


\noindent {\it Proof.} The first estimate (Schr\"odinger decay) is a direct application of Proposition~\ref{propdecroissance}.
For the second estimate (Wave decay), we use first the change of variable $ \lambda = \theta^2 $ so that
$$ \int_0^{\infty} e^{it \sqrt{\lambda}} \lambda^{-\frac{\sigma}{2}} a (\lambda) f (\lambda) d \lambda  = 2 \int_0^{\infty} e^{it \theta} \theta^{1-\sigma} a (\theta^2) f (\theta^2) d \theta  $$
and apply Proposition~\ref{propdecroissance} with $ s =  n - 1 - \sigma $ using that $ \tilde{a} (\theta) := \theta^{1-\sigma} a (\theta^2) \in {\mathcal H}^{n-1-\sigma} (\varepsilon^{\frac{1}{2}}) $ (by Proposition~\ref{propsimple}) and that
$$ || \tilde{a} ||_{(n-1-\sigma)} \leq C || a \circ \phi ||_{(n-2)} . $$
For the last estimate (Klein-Gordon decay), we write $ e^{it \sqrt{\lambda + 1}} = e^{it} e^{t \psi (\lambda)} $ with $ \psi (\lambda) = \sqrt{\lambda + 1} - 1  $ which is a diffeomorphism near $ [ 0 , \infty ) $ whose inverse $ \psi^{-1} (\theta) = (\theta + 1)^2 - 1  $ satisfies the assumption of the item 5 of Proposition~\ref{propsimple} with $ \kappa = 1 $. After the change variable $ \theta = \psi (\lambda) $, the conclusion then follows again from Proposition~\ref{propdecroissance} with $ s = \frac{n}{2}-1 $  and the fact that
$$ \Big\| (\psi^{-1})^{\prime}  ( a \circ \psi^{-1} ) \Big\|_{(\frac{n}{2}-1)} \leq C || a ||_{(\frac{n}{2}-1)} . $$
The proof is complete.  \finpreuve

\noindent {\bf Proof of Theorem~\ref{theoremededecroissance}.} It follows from Corollary~\ref{corollairedecroissance} by considering
$$ a (\lambda) = \big( \varphi ,  \scal{x}^{-\nu} E_P^{\prime} (\lambda) \scal{x}^{-\nu} \psi \big) $$ 
and from the fact that
$$ || a ||_{(\frac{n}{2}-1)} \leq C || \varphi ||_{L^2} || \psi ||_{L^2}, \qquad || a \circ \phi ||_{(n-2)} \leq C || \varphi ||_{L^2} || \psi ||_{L^2} $$
by Theorem~\ref{mesurespectrale} (and the item 5 of Proposition~\ref{propsimple}) provided we select $ \nu $ ensuring the finiteness of $ ||a ||_{(\frac{n}{2}-1)} $  for the Schr\"odinger and Klein-Gordon equations, and the finiteness of $ || a \circ \phi ||_{(n-2)} $  for the wave equation. The finiteness of $  ||a ||_{(\frac{n}{2}-1)} $ requires $ \big[ \frac{n}{2} \big] + 1 $ derivatives of the spectral measure hence $ \nu > \big[ \frac{n}{2} \big] + 2 $, while the finiteness of $  ||a ||_{(n-2)}  $ requires $ n  $ derivatives of the spectral measure hence $ \nu > n+1 $. The proof is complete. \finpreuve

\section{High frequency estimates: the boundaryless case } \label{hautesfrequ}
\setcounter{equation}{0}
In this section, we review the main points that allow to derive the non spectrally localized estimates of Theorem~\ref{nonspecloc}.  To cover all equations at the same time, we let
\begin{equation}
 \psi_h (\lambda) = \begin{cases}  \lambda & \mbox{for the Schrodinger equation} \\   \sqrt{\lambda} & \mbox{for the wave equation} \\
 \sqrt{\lambda + h^2} & \mbox{for the Klein-Gordon equation} \end{cases}  \label{defindepsih}
 \end{equation}
 and
 \begin{equation}
  U_h (s) := \exp \Big( - \frac{i}{h} s \psi_h (h^2 P) \Big) \label{propagateurinterchangeable}
 \end{equation} 
 be the related semiclassical propagator. Here $h \in (0,1] $ is a (high frequency) semiclassical parameter and $s$ corresponds to the natural semiclassical time. Eventually, we shall take $s= t$ for the wave and Klein-Gordon equations and $ s = t / h $ for the Schr\"odinger equation.
 
We recall first how to derive time decay estimates from power resolvent estimates. Assume that for some (or equivalently any) $ J \Subset (0,+\infty) $, we have the following polynomial (in $ 1/h $) resolvent estimates, for each $ k \in \Na $,
\begin{equation}
\Big\| \scal{x}^{-\nu} (h^2 P - \lambda \pm i 0)^{-k} \scal{x}^{-\nu} \Big\| \leq C_k h^{-M(k)}, \qquad \lambda \in J  , \label{polynomialresolventestimate}
\end{equation}
with $ M(k) \geq k $ and $ M (k) \leq M (k+1) $. When the geodesic flow is non-trapping (including in the billiard sense for obstacles), it is known that one can take $ M (k) = k $: this follows from (\ref{polynomialresolventestimate}) for $ k = 1 $ \cite{Robe,Burq} and e.g. techniques as in \cite{Jens}  (see also \cite{Bouc1} for the semiclassical framework) to extend it to higher powers. Note that assumption (\ref{polynomialresolventestimate}) also covers weakly trapping situations \cite{NoZw, BuDu}. We  refer to the recent paper \cite{BFRR} for connections between propagation of singularities and polynomial resolvent estimates.

  Let $ E^{\prime}_{h} (\lambda) $ be the spectral projections of $h^2 P$. The Stone formula (\ref{Stonesformula}) (for $h^2P$ instead of $P$) yields automatically
$$ \Big\|  \scal{x}^{-k-1}  \partial_{\lambda}^k \big(  E_h^{\prime}(\lambda)  \big) \scal{x}^{-k-1}  \Big\|  \leq C h^{-M (k+1 )}, \qquad \lambda \in J . $$ If now $ f \in C_0^{\infty} (0,\infty) $ and if we call its support $J$, then
by integrations by part in $ \int e^{-i s \psi_h (\lambda)/h} f (\lambda) E^{\prime}_h (\lambda) d\lambda $, using that $ \psi_h^{\prime} $ is bounded below on  $J$ (uniformly in $h$ in the case of Klein-Gordon), one easily obtains
\begin{equation}
  \Big\|  \scal{x}^{-k-1}  f (h^2 P)  U_h (s) \scal{x}^{-k-1}  \Big\|  \leq C h^{-M (k+1)}  \scal{s}^{-k} .  \label{drawback}
\end{equation}
This estimate illustrates in our context that, upon the choice of weights and a possible loss of derivatives measured by $ h^{-M(k+1)} $, the contribution of high fequencies to the local energy decay can be as fast as we wish in time.

The main drawback of (\ref{drawback}) is that there is a possibly unnecessary loss in $h$. One possible way to improve this inequality is to use the general interpolation estimate
$$ \Big\| \scal{x}^{-\theta N} A \scal{x}^{- \theta N} \Big\| \leq || A ||^{1-\theta} \Big\| \scal{x}^{-N} A \scal{x}^{-N} \Big\|^{\theta} , \qquad \theta \in [0,1] .$$
 Indeed, for any $  0 \leq \nu^{\prime} < \nu $ such that $ \nu/ (\nu - \nu^{\prime} ) $ is an integer (which can be guaranteed with $ \nu^{\prime} $ as close to $ \nu $ as we wish), we obtain\footnote{pick $ \theta \in (0,1) $ and $ k \in \Na $ such that $ \theta k = \nu^{\prime} $ and $ \theta (1+k) = \nu $},
 \begin{equation}
 \Big\| \scal{x}^{-\nu} f (h^2 P) U_h (s)  \scal{x}^{-\nu} \Big\| \leq C \scal{s}^{-\nu^{\prime}} h^{\nu^{\prime} - (\nu-\nu^{\prime}) M (\frac{\nu}{\nu - \nu^{\prime}}) } . \label{lossinh}
\end{equation}
In particular, if $ M (k) = k $, the loss in $h$ becomes $ h^{\nu^{\prime} - \nu} $, where $ \nu- \nu^{\prime} >0 $ is as small as we wish. 

Removing completely the artificial loss in $h$ in the non-trapping case requires more than the above trick. One possible technique is to use propagation estimates due to Isozaki-Kitada. For $ r > 0 $, $ J \Subset (0,\infty) $ and $ \varepsilon \in (0,1) $, we recall the definition of  outgoing(+) and incoming(-) areas: 
$$ \Gamma^{\pm} (r,J,\varepsilon) := \left\{ (x,\xi) \in \Omega \times \Ra^n \ | \ |x| > r, \ |\xi|^2 \in J, \ \pm \frac{x}{|x|} \cdot \frac{\xi}{|\xi|} > \varepsilon - 1 \right\}. $$


\begin{prop}[Outgoing/incoming propagation estimates] \label{propIsozakiKitada} Assume that (\ref{polynomialresolventestimate}) holds for all $ k $. Let $ 0 \leq \nu^{\prime} < \nu $. Let $ f \in C_0^{\infty}(0,+\infty) $, $ J \Subset (0,+\infty) $ and $ \varepsilon \in (0,1) $. If $ r $ is large enough and $ \chi_{\pm} \in S^{-\infty,0} $ is supported in $ \Gamma^{\pm} (r,J,\varepsilon) $ then
$$ \Big\|  \scal{x}^{-\nu}  U_h (s) f (h^2 P) \chi_{\pm} (x,hD)  \Big\|  \leq C \scal{s}^{- \nu^{\prime}}, $$
 for all $ \pm s \geq 0$ and $ h \in (0,1] $.
\end{prop}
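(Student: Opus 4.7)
The plan is to use an Isozaki-Kitada (IK) parametrix. Treat the outgoing case $s \geq 0$ with $\chi_+$; the incoming case is symmetric. The first step is to choose slightly enlarged data: fix $r' > r$ and $\varepsilon' \in (\varepsilon, 1)$, and a symbol $\tilde{\chi}_+ \in S^{-\infty,0}$ supported in $\Gamma^+(r', J', \varepsilon')$ with $\tilde{\chi}_+ \equiv 1$ on a neighborhood of $\mathrm{supp}\,\chi_+$, where $J' \Subset (0,\infty)$ contains $J$ in its interior. On such a forward outgoing region, for $r'$ large enough one solves the eikonal equation
\[
\sum_{j,k} g^{jk}(x)\, \partial_j S_+(x,\xi)\, \partial_k S_+(x,\xi) = |\xi|^2, \qquad S_+(x,\xi) = x \cdot \xi + O(\langle x \rangle^{1-\rho}),
\]
and then the transport equations, producing symbols $a_+, b_+ \in S^{-\infty,0}$ with support in $\Gamma^+(r',J',\varepsilon')$. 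This yields semiclassical FIOs
\[
{\mathcal J}_\pm u(x) = (2\pi h)^{-n}\!\! \iint e^{i(S_+(x,\xi)- y\cdot\xi)/h}\, a_+(x,\xi)\, u(y)\, dy\, d\xi
\]
(and similarly with $b_+$) which intertwine $h^2 P$ with $h^2 D^2$ up to a remainder of order $h^\infty$ microlocally in the outgoing region.

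The second step is the parametrix identity: one gets
\[
U_h(s)\, f(h^2 P)\, \chi_+(x,hD) \;=\; {\mathcal J}_{a_+}\, U_h^{0}(s)\, {\mathcal J}_{b_+}^{*} \;+\; R_+(s),
\]
where $U_h^{0}(s) = \exp(-i s \psi_h(h^2 D^2)/h)$ is the free (or flat) model propagator. By Duhamel, the remainder has the form
\[
R_+(s) = -\frac{i}{h}\int_0^s U_h(s-\tau)\, E_+(\tau)\, d\tau,
\]
with $E_+(\tau) = [\psi_h(h^2 P), {\mathcal J}_{a_+}] \tilde{U}(\tau) - \cdots$ of order $h^\infty$ in the operator norm between suitable weighted spaces, because the symbolic construction of $S_+, a_+, b_+$ kills all the asymptotic series in $h$ in the outgoing region.

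The third step handles the main term: on the support of the Schwartz kernel of ${\mathcal J}_{a_+} U_h^0(s) {\mathcal J}_{b_+}^*$, the stationary phase principle (or a direct integration by parts in $\xi$ using that $\nabla_\xi S_+(x,\xi) - s\, \psi_h'(|\xi|^2)\cdot \text{something}$ is large outside an outgoing tube) forces $|x| \gtrsim 1 + s$. One then pays the weight $\langle x \rangle^{-\nu}$ as $\langle s \rangle^{-\nu}$ and uses the uniform $L^2$-boundedness of ${\mathcal J}_{a_+}, U_h^0(s), {\mathcal J}_{b_+}^*$. The remainder $R_+(s)$ is controlled via the Helffer-Sj\"ostrand formula applied to the resolvent $(h^2 P - z)^{-1}$, combined with the polynomial bounds \eqref{polynomialresolventestimate}: since $E_+(\tau)$ is $O(h^\infty)$ between weighted spaces $\langle x \rangle^{-N} L^2 \to \langle x \rangle^{N} L^2$ for any $N$, the factor $h^{-M(k)}$ from the resolvent is absorbed, and choosing $N$ large enough yields arbitrarily fast decay in $s$ for $R_+(s)$ (much better than $\langle s \rangle^{-\nu'}$).

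\textbf{Main obstacle.} The hard part is the first step: constructing the IK phase $S_+$ and the symbols $a_+, b_+$ globally in $s$ in the enlarged outgoing region, showing that the associated Hamiltonian flow stays in $\Gamma^+$ for all positive times when started in $\mathrm{supp}\, \chi_+$ (this is where $r$ is taken large and where the long-range condition $\rho > 0$ from Assumption~\ref{asum.1.1} is essential, since it forces the perturbation of the free flow to be integrable). Once the phase and symbols are under control, the operator-theoretic part is standard. The threshold $\nu' < \nu$ comes from the need to interpolate the decay $\langle s \rangle^{-\nu}$ on the main term against the slow $O(1)$ bound of the pseudodifferential cutoffs in the slightly enlarged outgoing region.
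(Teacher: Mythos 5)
The paper does not prove this proposition itself; it explicitly refers to \cite{IsKi,BoTz,BoMi}, all of which use exactly the Isozaki--Kitada parametrix scheme you describe. Your outline (eikonal and transport equations to build $S_+$ and the symbols $a_+,b_+$ in an enlarged outgoing region, main term whose kernel is forced into $|x|\gtrsim 1+|s|$ by non-stationary phase, Duhamel remainder $O(h^\infty)$ between weighted spaces and absorbed using the polynomial resolvent bounds \eqref{polynomialresolventestimate}) matches the standard argument in those references, so the approach is essentially the same as the one the paper relies on.
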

We refer to \cite{IsKi,BoTz,BoMi} for proofs. The main interest of this proposition is to remove the loss in $h$ in the time decay estimates, even in trapping situations (as long as (\ref{polynomialresolventestimate}) holds). Note however that the estimates of Proposition~\ref{propIsozakiKitada} hold in one sense of time.

\noindent {\bf Remark.} Proposition~\ref{propIsozakiKitada} still holds when there is a compact obstacle. The estimates come from microlocal parametrices localized far away from the obstacle; the control on the remainder terms only uses polynomial estimates of the form (\ref{polynomialresolventestimate}) which hold for non-trapping obstacles.

We recall below the proof of Wang \cite{Wang1} (written in the case of semiclassical Schr\"odinger operators $ - h^2 \Delta + V $) of sharp in $h$ time decay estimates in the non-trapping case. It uses Proposition~\ref{propIsozakiKitada} in a crucial manner.

\begin{prop} \label{pourconclusionnontrapping} Let $ \Omega = \Ra^n $. If the non-trapping condition holds, then for any $ \nu > \nu^{\prime} > 0 $,
$$ \Big\| \scal{x}^{-\nu} f (h^2 P) U_h (s) \scal{x}^{-\nu} \Big\| \leq C \scal{s}^{-\nu^{\prime}} , $$
for all $ s \in \Ra $ and $ h \in (0,1] $.
\end{prop}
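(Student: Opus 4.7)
The plan is to combine the Isozaki--Kitada type propagation estimates of Proposition~\ref{propIsozakiKitada} with a microlocal partition of $f(h^2P)$ and an Egorov argument that leverages the non-trapping hypothesis to convert the ``bad'' microlocal pieces into outgoing ones propagated in positive time. By taking adjoints it suffices to treat $s \geq 0$, and by unitarity of $U_h(s)$ combined with boundedness of $\scal{x}^{-\nu}$ the estimate is trivial for $|s| \leq T_0$ for any fixed $T_0$. The work is therefore to bound the quantity for $s \geq T_0$, with $T_0$ to be determined from the non-trapping escape dynamics.

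Fix $\widetilde{f} \in C_0^\infty((0,+\infty))$ equal to $1$ near $\mbox{supp}(f)$ and $J \Subset (0,+\infty)$ containing $\mbox{supp}(\widetilde{f})$, and construct a symbolic partition of unity $\chi_+ + \chi_- + \chi_0 \equiv 1$ on $\{(x,\xi) : |\xi|^2 \in \mbox{supp}(\widetilde{f})\}$, with $\chi_\pm \in S^{-\infty,0}$ supported in $\Gamma^\pm(R,J,\varepsilon)$ and $\chi_0 \in C_0^\infty(\Ra^{2n})$ compactly supported in $\{|x| \leq 2R\}$. Pseudodifferential calculus gives
$$f(h^2P) = f(h^2P)\bigl(\chi_+(x,hD) + \chi_-(x,hD) + \chi_0(x,hD)\bigr) + \mathcal{O}_{L^2}(h^\infty),$$
so one reduces to bounding, for $\alpha \in \{+,-,0\}$ and uniformly in $h \in (0,1]$, $s \geq T_0$, the operators
$$\mathcal{T}_\alpha(s) := \scal{x}^{-\nu}\,U_h(s)\,f(h^2P)\,\chi_\alpha(x,hD)\,\scal{x}^{-\nu}.$$
For $\alpha = +$, Proposition~\ref{propIsozakiKitada} applied to $\scal{x}^{-\nu}\,U_h(s) f(h^2P)\chi_+(x,hD)$ directly yields $\|\mathcal{T}_+(s)\| \leq C\scal{s}^{-\nu'}$ for $s \geq 0$. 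For $\alpha \in \{-,0\}$, the non-trapping hypothesis guarantees that the classical Hamiltonian flow $\Phi_t$ drives the (suitably compact portion of the) support of $\chi_\alpha$ into a deep outgoing set $\Gamma^+(R',J,\varepsilon')$ in a finite time $T$; Egorov's theorem applied at this fixed time gives
$$U_h(T)\chi_\alpha(x,hD) = \widetilde{\chi}_+^\alpha(x,hD)\,U_h(T) + \mathcal{O}_{L^2}(h^\infty),$$
with $\widetilde{\chi}_+^\alpha$ of principal symbol $\chi_\alpha \circ \Phi_{-T}$ supported in $\Gamma^+(R',J,\varepsilon')$. Setting $T_0 := T$ and writing $U_h(s) = U_h(s-T) U_h(T)$ for $s \geq T_0$, one gets
$$\|\mathcal{T}_\alpha(s)\| \leq \bigl\|\scal{x}^{-\nu} U_h(s-T) f(h^2P) \widetilde{\chi}_+^\alpha(x,hD)\bigr\|\cdot\bigl\|U_h(T)\scal{x}^{-\nu}\bigr\| + \mathcal{O}(h^\infty),$$
where the first factor is $\leq C\scal{s-T}^{-\nu'} \lesssim \scal{s}^{-\nu'}$ by another application of Proposition~\ref{propIsozakiKitada} to the outgoing $\widetilde{\chi}_+^\alpha$, and the second is uniformly bounded by unitarity.

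The main technical hurdle is that $\chi_-$ is not compactly supported in $x$, so non-trapping does not directly furnish a single uniform escape time $T$ for its whole support: the escape time grows linearly with $|x|$ for points near the edge of the incoming cone. The remedy is to split $\chi_- = \chi_-^{\mathrm{near}} + \chi_-^{\mathrm{far}}$ with $\chi_-^{\mathrm{near}}$ supported in a large but bounded annulus in $x$; the argument above then handles $\chi_-^{\mathrm{near}}$ with a uniform $T$, while $\chi_-^{\mathrm{far}}$, supported in $\{|x| \geq R_1\}$, is controlled through the smallness of $\scal{x}^{-\nu}$ on its support together with Proposition~\ref{propIsozakiKitada} applied in the negative-time direction, optimizing $R_1$ in terms of $s$. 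The Egorov remainder is polynomial in $T$ but $T$ is independent of $h$ and $s$, so the error is a harmless uniform $\mathcal{O}(h^\infty)$. The case $s \leq 0$ follows by taking adjoints, i.e.\ by swapping the roles of $\chi_+$ and $\chi_-$.
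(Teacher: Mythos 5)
Your plan is on the right track for the outgoing and compactly supported pieces, and you have correctly spotted the genuine difficulty: the incoming piece $\chi_-$ is not compactly supported, so non‑trapping alone does not provide a uniform escape time. The gap is in your proposed remedy, which is internally inconsistent. If $R_1$ (the threshold separating $\chi_-^{\mathrm{near}}$ from $\chi_-^{\mathrm{far}}$) is fixed, the far part $\scal{x}^{-\nu}U_h(s)f(h^2P)\chi_-^{\mathrm{far}}\scal{x}^{-\nu}$ only gains a constant factor $R_1^{-\nu}$ from the right weight and produces no decay in $s$; if instead $R_1$ is allowed to grow with $s$, the escape time $T$ for $\chi_-^{\mathrm{near}}$ grows with $R_1$ and hence with $s$, so the Egorov remainder constants (which depend on $T$, typically polynomially or exponentially) are no longer uniform in $s$. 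You claim in the final sentence that $T$ is independent of $s$, but the preceding optimization over $R_1$ contradicts that. In fact there is no choice of $R_1$ that makes both halves of the split work uniformly in $(h,s)$.

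The paper sidesteps this issue entirely by choosing a different decomposition, namely $f(h^2P)= f(h^2P)\chi(x) + \chi_+(x,hD,h) + \chi_-(x,hD,h)^* + h^N\scal{x}^{-N}B(h)\scal{x}^{-N}$, with the \emph{adjoint} of the incoming cutoff, and by writing the quantity to be estimated in the symmetric-in-time form $W(s) f(h^2P)\, W(-s)^*$ with $W(s)=\scal{x}^{-\nu}U_h(s/2)\tilde f(h^2P)$. Then the incoming piece becomes $W(s)\chi_-^*\,W(-s)^*$, whose norm is bounded by $\|W(s)\|\cdot\|\chi_-^*\,W(-s)^*\| = \|W(s)\|\cdot\|W(-s)\chi_-\|$, and $W(-s)\chi_-$ falls squarely under Proposition~\ref{propIsozakiKitada} since $-s/2\le 0$ and $\chi_-$ is incoming: no Egorov, no near/far split, no uniformity issue. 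Egorov is only ever invoked for the compactly supported $A^0 := f(h^2P)\chi(x)$, where the escape time is genuinely uniform and independent of $s$ and $h$. Your argument for $\chi_+$ and $\chi_0$ (and the interpolation treatment of the remainder) does match the paper; the incoming piece is where you need the $W(s)\cdot W(-s)^*$ factorization and the adjoint placement rather than the near/far optimization.
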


\noindent {\it Proof.}  We work for instance for $ s \geq 0 $. 
 By pseudo-differential approximation of $ \tilde{f} (h^2 P) $ (see \cite{BoTz}), one can write for any $ \chi \in C_0^{\infty} (\Ra^n) $, equal to $1$ near the ball $ \{ |x| \leq r \} $,  and for any $N$, 
 \begin{equation}
 f(h^2P) = \underbrace{f(h^2P) \chi (x)}_{=:A^0} + \underbrace{\chi_+ (x,hD,h)}_{=:A^+} + \underbrace{\chi_- (x,hD,h)^*}_{(A^-)^*} + h^N \underbrace{\scal{x}^{-N}B(h) \scal{x}^{-N}}_{=:R} \label{decompositionareutiliser}
\end{equation}
with $ B (h) $ uniformly bounded on $L^2 $ and $ \chi_{\pm} $ finite sums of the form $ \sum_{j \geq 0} h^j a_j^{\pm} $ with $a_j^{\pm} $ supported in $ \Gamma^{\pm}(r,J,1/2) $.
Picking $ \tilde{f} \in C_0^{\infty}(0,+\infty) $ real valued and equal to $1$ near the support of $f$, we may then write
\begin{equation*}
\scal{x}^{-\nu} U_h (s) f (h^2 P) \scal{x}^{-\nu} = \underbrace{\Big( \scal{x}^{-\nu} U_h (s/2) \tilde{f} (h^2P) \Big)}_{=:W (s)} f(h^2P) \Big( \tilde{f} (h^2 P) U_h (s/2) \scal{x}^{-\nu} \Big)
\end{equation*}
which can be splitted into
$$ W (s) A^{0} W (-s)^* + W (s) A^+  W (-s)^* + W (s) A^{-*}  W (-s)^* + h^N W (s) R  W(-s)^* . $$
By Proposition~\ref{propIsozakiKitada}, we have
\begin{equation}
 \Big\| W (s) A^+  W(-s)^* \Big\| \leq  C \Big\| W (s) A^+  \Big\| \leq C \scal{s}^{- \nu^{\prime}}  \label{highfreq1}
\end{equation}
and
$$ \Big\| W (s) A^{-*}  W (-s)^* \Big\| \leq  C \Big\|  A^{-*} W (-s)^* \Big\| =  C \Big\|   W (-s) A^-  \Big\|  \leq C \scal{s}^{- \nu^{\prime}} .  $$
Picking $ N $ large enough to compensate the loss in $h$ in (\ref{lossinh}) and to have $ N > \nu$, we find
\begin{equation}
  \Big\| h^N W (s) R W(-s)^*  \Big\|  \leq C h^N \Big\| W (s) \scal{x}^{-N} \Big\| \ \Big\| \scal{x}^{-N} W (-s)^* \Big\| \leq C \scal{s}^{-2 \nu^{\prime}} . \label{highfrequ2}
\end{equation}
So far, we haven't used the non-trapping condition (except in the very weak form~\eqref{polynomialresolventestimate}). We use it to handle the term $ W (s)A^0 W (-s)^* $. For $ T> 0 $ large enough independent of $h$ to be fixed below and $ s \geq T $
\begin{equation}
 W (s) A^{0} W (-s)^* =  W (s-T)  \Big( U_h \big(T/2 \big)   A^0 U_h  \big(-T/2 \big) \Big) W (-s-T)^* .  \label{ideetrick}
\end{equation} 
By the Egorov theorem and the non-trapping condition, which implies that the wave front set of $ A^0 $ is transported by the geodesic  into any given outgoing area\footnote{i.e. with arbitrary $r$ and $ \varepsilon $} in finite time (this is a classical property which we prove for completeness in Appendix \ref{refoutgoing}), one can write for any $N$
\begin{equation}
  U_h \big(T/2 \big)   A^0 U_h  \big(T/2 \big)  = A^+_T  + h^N R_{T}  \label{ideetrick2} 
\end{equation}
with $ A^+_T $ similar to $A^+ $ and $ R_T $ similar to $R$. Their contributions are then obtained as in (\ref{highfreq1}) and (\ref{highfrequ2}). Note here that the estimates are given in term of $s \pm T$ but this is harmless since we are interested in $ s \rightarrow + \infty $. \finpreuve

\noindent {\bf Proof of Theorem~\ref{nonspecloc}.} Consider a dyadic partition of unity $ 1 = F (\lambda) + \sum_{\ell \geq 0} f (2^{-\ell} \lambda)$ with $ F \in C_0^{\infty}(\Ra) $ and $ f \in C_0^{\infty}(0,+\infty) $. We can sum the estimates of Proposition~\ref{pourconclusionnontrapping} with $ h^2 = 2^{-\ell} $ to get
$$ \Big\| \scal{x}^{-\nu} (1-F)(P) e^{i t \sqrt{P+c}} \scal{x}^{-\nu} \Big\| \leq C \scal{t}^{- \nu^{\prime}}  $$
with $c=0,1$, and for the Schr\"odinger equation
$$ \Big\| \scal{x}^{-\nu} (1-F)(P) e^{-itP} \scal{x}^{-\nu} \Big\|_{L^2 \rightarrow H^{\nu^{\prime}}} \leq C |t|^{-\nu^{\prime}} . $$
In this case, we use that the semiclassical time decay rate in term of $t$, $ \scal{t/h}^{-\nu^{\prime}}, $ is bounded by $  h^{\nu^{\prime}} |t|^{-\nu^{\prime}}$.
We refer to Corollary 6.2 of \cite{Bouc1} for details on the summation over $h$. The contribution of $ F (P) $ follows from Theorem~\ref{theoremededecroissance}, where the $ L^2 \rightarrow L^2 $ operator norm can be replaced by the $ L^2 \rightarrow H^N $ one for any $N$. The result follows by picking $ \nu^{\prime} $ according to the decay rates of Theorem~\ref{theoremededecroissance}. \finpreuve

\section{High frequency estimates: the case with a boundary}\label{boundary}
\setcounter{equation}{0}
When $ \Omega \ne \Ra^n $, the strategy displayed in the previous section can be repeated almost verbatim: the only argument which fails is the Egorov theorem used in (\ref{ideetrick2}) to handle the contribution of the compactly supported cutoff $A_0$ involved in (\ref{ideetrick}). This difficulty can be overcome by using the Melrose-Sj\"ostrand propagation of singularities theorem \cite{MeSj}. The purpose of this section is to explain this point and to recall the minimal background on the Melrose-Sj\"ostrand generalized geodesic flow to state properly the non-trapping assumption in this case.

\subsection{The wave and Klein-Gordon equations}

In this paragraph, we consider the wave equation $ (\partial_t^2 + P ) u = 0 $. Replacing $ P $ by $ P+1 $ does not change anything for our  purpose so the analysis below also covers the case of the Klein-Gordon equation.  We let $ \chi_0 \in C_0^{\infty} (\Ra^n) $ be a cutoff equal to $1$ near the obstacle and consider, for $ u_0 \in L^2 $ and $ k \in \Na $ arbitrary,
 $$ u  =  e^{-  i t \sqrt{P}} ( P^k \chi_0 u_0 ) . $$ 
 We want  to study mainly the case $k = 0 $, however it is technically important to consider the general case (see Corollary~\ref{hyppropsing}).
This is a distribution satisfying the wave equation and the Dirichlet boundary condition (it is $ 0$ when restricted to $ \Ra \times \partial \Omega $). We refer to Appendix~\ref{domainonde} for a justification of this.

  We wish to study the wave front set of $ u $ seen as distribution on $ \overline{\Omega} \times \Ra $. The good notion of wavefront set here is $ WF_b (u) $ seen as a subset of $T^*_b( \Omega \times \mathbb{R} )= T^* (\Omega \times \Ra) \setminus 0 \sqcup T^* (\partial \Omega \times \Ra) \setminus 0 $. If $ x_0 \in \Omega $, one says that $ (x_0,t_0,\xi_0,\tau_0) $ does not belong to $ WF_b (u) $ iff it does not belong to $ WF (u|_{\Omega \times \Ra}) $, i.e. if one can apply a classical pseudodifferential operator to $ u $, elliptic at $  (x_0,t_0,\xi_0,\tau_0)  $ which turns $u$ into a smooth function near $ (x_0,t_0) $. When $ x_0 $ belongs to the boundary $ \partial \Omega $, one says that 
 $ (x_0,t_0,\xi_0,\tau_0) $ does not belong to $ WF_b (u) $\footnote{in this case, $ (x_0,\xi_0) $ belongs to $ T^* \partial \Omega $} if one can apply a {\it tangential} pseudodifferential operator to $u$, elliptic at  $ (x_0,t_0,\xi_0,\tau_0) $ which turns $u$ into a function smooth up to the boundary in a neighborhood of $ (x_0,t_0) $ (everywhere we take the geodesic distance to $ \partial {\mathcal K} $ as a boundary defining function).
 
 \begin{prop}[Rough estimate on $ WF_b (u) $ near $ t=0$] \label{roughWFb} Assume that $ \chi_0 $ is supported in an open ball $ B (0,r_0) $ also containing the obstacle ${\mathcal K}$. Then there exists $ \delta > 0 $ such that
 $$  WF_b (u)  \cap \{ |t| < \delta \} \ \ \mbox{is contained in} \ \  \{  |x| \leq r_0 \  \ \mbox{and} \ \  \tau < 0 \} . $$
 \end{prop}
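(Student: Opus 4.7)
The proof splits into two independent parts: the spatial confinement $|x| \leq r_0$, coming from finite propagation speed for $e^{-it\sqrt{P}}$, and the sign condition $\tau < 0$, coming from the positivity of $\sqrt{P}$ combined with the ellipticity of the wave operator off its characteristic set. Since $P^k$ commutes with $e^{-it\sqrt{P}}$, one has $u = P^k \tilde{u}$ with $\tilde{u} := e^{-it\sqrt{P}}(\chi_0 u_0) \in C(\mathbb{R}; L^2(\Omega))$, and the differential operator $P^k$ preserves both singular support and the time-Fourier support in $\tau$, so it is enough to treat $\tilde u$.

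For the spatial claim, decompose $e^{-it\sqrt{P}} = \cos(t\sqrt{P}) - i \sin(t\sqrt{P})$. By the classical finite propagation speed for the Dirichlet wave equation (proved by energy estimates), the Schwartz kernels of $\cos(t\sqrt{P})$ and of $\sin(t\sqrt{P})/\sqrt{P}$ are supported in $\{d_g(x,y) \leq |t|\}$. Writing $\sin(t\sqrt{P}) = \sqrt{P} \cdot \sin(t\sqrt{P})/\sqrt{P}$ and invoking the pseudo-local character of the half-wave kernel (equivalently, that the Schwartz kernel of $e^{-it\sqrt{P}}$ has \emph{singular} support contained in $\{d_g(x,y) \leq |t|\}$, as given by the Hadamard-type parametrix, or the Chazarain-Melrose parametrix for the mixed problem, which is valid at least for times shorter than the first reflection), we conclude that $\tilde u$ is $C^\infty$ up to the boundary on $\{d_g(x, \mathrm{supp}(\chi_0 u_0)) > |t|\}$. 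Choosing
\[ \delta < d\bigl(\overline{\mathrm{supp}(\chi_0)}, \overline{\Omega} \setminus B(0,r_0)\bigr), \]
this yields the spatial part of the claim.

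For the sign of $\tau$, let $\phi \in \mathcal{S}(\mathbb{R})$ have Fourier transform $\hat\phi$ supported in $(0,+\infty)$. Since $\sqrt{P} \geq 0$, the spectral theorem gives
\[ \int_{\mathbb{R}} \phi(t) \tilde u(t)\, dt = \hat\phi(\sqrt{P})\, \chi_0 u_0 = 0, \]
so the distributional Fourier transform of $\tilde u$ in $t$ is supported in $\{\tau \leq 0\}$; hence $WF_b(\tilde u) \subset \{\tau \leq 0\}$, and therefore also $WF_b(u) \subset \{\tau \leq 0\}$. On the other hand, $u$ solves $(\partial_t^2 + P) u = 0$, whose principal symbol $|\xi|_g^2 - \tau^2$ is elliptic off the characteristic set $\{\tau^2 = |\xi|_g^2\}$; so $WF_b(u) \subset \{\tau^2 = |\xi|_g^2\}$. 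A wavefront point with $\tau = 0$ would force $\xi = 0$, contradicting that covectors in the wavefront set are non-zero. Combined with $\tau \leq 0$ this yields the strict inequality $\tau < 0$.

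The delicate ingredient is the first step, namely the short-time pseudo-local/finite-singular-support property of $e^{-it\sqrt{P}}$ in presence of the Dirichlet boundary. This is classical but does require care near the boundary; for the rough estimate we only need it for $|t|$ smaller than the shortest length of a broken geodesic starting and ending in $B(0,r_0)$ but leaving it in between, which is clearly bounded below since geodesics have to travel a positive distance to exit and return.
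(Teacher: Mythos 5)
Your proof is correct, and each of its three ingredients lines up with a corresponding step in the paper's proof, but for the spatial confinement $|x|\le r_0$ you reach for considerably heavier machinery than the paper uses. The paper's route is entirely elementary: it writes $u = v - i\sqrt{P}w$ with $v=\cos(t\sqrt{P})(P^k\chi_0 u_0)$ and $w=\frac{\sin(t\sqrt{P})}{\sqrt{P}}(P^k\chi_0 u_0)$, notes that finite speed of propagation (a soft energy estimate) confines the \emph{supports} of $v(t),w(t)$ to $B(0,r_0)$ for $|t|$ small, and then handles the non-local factor $\sqrt{P}$ by a commutator trick, $(1-\chi_2)u=i[\sqrt{P},\chi_2]\chi_1 w$, where the pseudo-differential part of $[\sqrt{P},\chi_2]$ has symbol supported in $\mathrm{supp}(\nabla\chi_2)$, hence disjoint from both the boundary and $\mathrm{supp}(\chi_1)$, so it smooths. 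This only uses the interior pseudo-differential structure of $\sqrt{P}$ and never requires a parametrix for the mixed problem. You instead invoke the Hadamard/Chazarain--Melrose parametrix to assert that the Schwartz kernel of $e^{-it\sqrt{P}}$ has singular support in $\{d_g(x,y)\le|t|\}$ \emph{including up to the boundary}; that statement is true (it is a consequence of Melrose--Sj\"ostrand propagation of singularities for the mixed problem, which the paper does use later on, in Proposition~\ref{propwf}), but it is a much stronger fact than is needed at this stage, and your caveat about ``times shorter than the first reflection'' is a red herring: the issue is not reflections but whether $\sqrt{P}$ spreads singularities near the boundary, which is exactly what the commutator argument neatly sidesteps. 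For the Fourier/$\tau$ part, both arguments are morally identical (a functional-calculus manifestation of $\sqrt{P}\ge0$, the paper via $\psi(D_t)u=\psi(-\sqrt{P})u$ smoothing, you via integrating against a test function in $t$); there is a sign-convention ambiguity in your identity $\int\phi(t)\tilde u(t)\,dt=\hat\phi(\sqrt P)\chi_0 u_0=0$ with $\hat\phi$ supported in $(0,\infty)$ --- with Hörmander's convention it should be $\hat\phi$ supported in $(-\infty,0)$ --- but this does not affect the conclusion. One genuine improvement in your write-up over the paper's: you make explicit the upgrade from $\tau\le0$ to the strict $\tau<0$ via the inclusion $WF_b(u)\subset\{\tau^2=|\xi|_g^2\}$ and the non-vanishing of covectors, a step the paper leaves implicit.
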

 
 \noindent {\it Proof.} That $ \tau < 0 $ follows from the fact that if $ a \in S^0 (\Ra) $ is an elliptic symbol equal to $ 1 $ near $ + \infty $ and to $ 0 $ near $ - \infty $ then
 $$ \psi (D_t) u = \psi (- \sqrt{P}) u $$
  is smooth since $ \psi (-\sqrt{P}) $ is a compactly supported function of $ P $ by the spectral theorem hence a smoothing operator. Note that this holds at any time, not only at $ t = 0 $. We next show that $ |x| \leq r_0 $.   Consider
 $$ v (t) = \cos ( t \sqrt{P} ) (P^k \chi_0 u_0) , \qquad w (t) = \frac{\sin (t \sqrt{P})}{\sqrt{P}} ( P^k \chi_0 u_0 ) , $$
 so that
 $ u (t) = v (t) - i \sqrt{P} w (t)  $.
The interest of $ v$ and $w$ is that they solve the wave equation with inital data {\it supported} in $ \mbox{supp}(\chi_0) $. Thus by finite speed of propagation, one has
$$ \mbox{supp} (v(t)) \cup \mbox{supp} (w(t)) \subset B (0,r_0), \qquad |t| \ll 1 . $$
One can thus pick a smooth cutoff $ \chi_1 $ equal to $1$ near $ \mbox{supp}(\chi_0) $ and supported in $ B (0,r_0)$ such that, for  $t$ small,
$$ v (t) = \chi_1 v(t) , \qquad w (t) = \chi_1 w (t) , $$
and thus
$$ u (t) = \chi_1 v (t) -  i  \sqrt{P} \chi_1 w (t) . $$
  Now, if we pick $ \chi_2 $ supported in $ B (0,r_0) $ and equal to $1$ near $ \mbox{supp}(\chi_1) $, we have 
 $$ ( 1 - \chi_2 ) u (t) = i  ( 1 - \chi_2 ) \sqrt{P} \chi_1 w (t) = i \big[ \sqrt{P} , \chi_2 \big] \chi_1 w (t) . $$
One may then write the commutator $ \big[ \sqrt{P} , \chi_2 \big]  $ as the sum of a smoothing operator and a properly supported pseudo-differential operator with symbol vanishing near the support of $ \chi_1 $; the point here is that the pseudo-differential part is supported in $\mbox{supp}(\nabla \chi_2) $, in particular away from the boundary and the support of $ \chi_1 $.  Therefore, for small times
$$  \big[ \sqrt{P} , \chi_2 \big] \chi_1 w (t) \in C^{\infty} (\overline{\Omega}) .  $$
Using that $ w $ solves the wave equation, we have similarly (and for the same times) for any $ j \in \Na $
$$ \partial_t^{2j} \Big(  \big[ \sqrt{P} , \chi_2 \big] \chi_1  w (t) \Big) \in C^{\infty} (\overline{\Omega}) .  $$
We show this way that $ (1 - \chi_2) u \in C^{\infty} (\overline{\Omega} \times (-t_0,t_0)) $ for some $ t_0 > 0 $. The result follows. \finpreuve



To state and use properly the non-trapping condition, we recall the main properties of the generalized bicharacteristic flow of Melrose-Sj\"ostrand (see \cite{MeSj} and \cite[Sec. 24.3]{Horm3}). It is defined on the subset $ p (x,\xi) - \tau^2 = 0  $ by the standard Hamiltonian system
\begin{equation}\label{bica}
 \dot{x} = \nabla_{\xi} p (x,\xi), \qquad \dot{t} = - 2 \tau, \qquad \dot{\xi} = - \nabla_{x}p (x,\xi), \qquad \dot{\tau} = 0 
\end{equation}
as long as $x$ does not reach the boundary $ \partial \Omega $ (here and below the dot $ \dot{ \ } $ stands for the derivative wrt some parameter $ s $). If $x$ reaches (or starts at) the boundary, the above flow is modified as follows. Denoting by $ y_n $ the geodesic distance to $ \partial \Omega $ and $ y_1 , \ldots , y_{n-1} $ coordinates on $ \partial \Omega $, the Hamiltonian $ p (x,\xi) - \tau^2 $ can be written $ \eta_n^2 + q (y,\eta^{\prime}) - \tau^2 $ where $ \eta^{\prime} $ is the dual variable to $ y^{\prime} = (y_1,\ldots , y_{n-1}) $, $ \eta_n $ the one to $ y_n $ and $ y = (y_1, \ldots , y_{n}) $. If $ y_n = 0 $ and $ q (y^{\prime},0,\eta^{\prime}) - \tau^2 < 0 $ (hyperbolic point), one applies the usual billiard reflection law. In this case, the variable $ \xi $ has a jump, but the other ones remains continuous wrt $s$ (and the flow is actually continuous with values in $T^*_b( \Omega \times \mathbb{R} )$ endowed with a proper topology). Otherwise, at glancing points i.e. if $ q (y^{\prime},0,\eta^{\prime}) - \tau^2 = 0 $, we distinguish three possibilities. 
\begin{itemize}
\item Either the point is diffractive, $\partial_{y_n} q(0,y', \eta') <0$ (i.e. the domain is micro-locally concave) and  at this point we still have~\eqref{bica} and the ray leaves instantly the boundary after and before grazing the boundary.
\item Either the point is gliding $\partial_{y_n} q(0,y', \eta') >0$ (i.e. the domain is micro-locally convex) 
and one continues the motion by solving
\begin{equation}
 \dot{y}^{\prime} = \nabla_{\eta^{\prime}} q (y^{\prime},0,\eta^{\prime}), \qquad y_n = 0, \qquad \dot{\eta}^{\prime} = - \nabla_{y^{\prime}} q (y^{\prime},0,\eta^{\prime}), \qquad \eta_n = 0,   \label{modifyaccordingly}
\end{equation}
together with
$$  \dot{t} = - 2 \tau, \qquad   \dot{\tau} = 0 . $$
\item Or the point is degenerate $\partial_{y_n} q(0,y', \eta') =0$, then we require that~\eqref{bica} is satisfied (remark that then at these points~\eqref{modifyaccordingly} is also satisfied . 
\end{itemize}
This procedure defines a flow under an assumption of "no infinite contact order between the boundary and its tangents". 
 We refer the reader to~\cite{MeSj, Horm3} for more details; one of the points we wish to emphasize here is that the standard flow and the one for gliding rays have the same homogeneity property and since neither $ \xi $ (or $ \eta $) nor $ \tau $ can vanish, we can parametrize the whole flow so that $ \tau = \mp 1/2 $. In this case, 
$$ \dot{x} = \nabla_{\xi} \sqrt{p(x,\xi)}, \qquad \dot{\xi} = - \nabla_x \sqrt{p(x,\xi)}, \qquad \dot{t} = \pm 1 , $$
away from the boundary and (\ref{modifyaccordingly}) is modified accordingly, i.e. by replacing $ q$ by $ \sqrt{q} $. We shall say that this is a {\it normalized parametrization} and, say if $ \dot{t} = 1 $, call the curves $ t \mapsto x = x(t) $ {\it normalized characteristics}.

\begin{defi} \label{nontrappingobstacle} The couple $ (\Omega, g) $ is {\bf non-trapping} if, for every compact subset $ K $ of $ \overline{\Omega} $ and every $ R \gg 1 $, there is some time $ T \gg 1 $ for which all normalized characteristics starting in $K$ at $t=0$ are contained in $ \{ |x| > R \} $ for $ |t| > T $.
\end{defi}
\begin{rema} The non-trapping assumption does not require uniqueness of the generalized bicharacteristics passing through a given point. 
\end{rema}
\begin{prop}[Propagating the wavefront set in an outgoing region] \label{propwf} Let $ R \gg 1 $ and $ 0 < \varepsilon \ll 1 $. Then for all $ T  $ large enough, one can find $ \delta > 0 $ and $ R^{\prime} \gg 1 $ such that
$$ WF_b (u) \cap \{ |t-T| < \delta \} \ \ \mbox{is contained in} \ \ \left\{  R^{\prime} > |x| > R \ \  \mbox{and} \ \  \frac{x}{|x|} \cdot \frac{\xi}{|\xi|} > \varepsilon - 1  \right\} . $$
\end{prop}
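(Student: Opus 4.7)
The plan is to combine the Melrose-Sj\"ostrand propagation of singularities theorem with Proposition~\ref{roughWFb} and the non-trapping hypothesis (Definition~\ref{nontrappingobstacle}). By homogeneity of $WF_b$ in the fiber and conservation of $\tau$ along generalized bicharacteristics, I can normalize to $\tau = -1/2$ (the sign $\tau < 0$ comes from Proposition~\ref{roughWFb}); then the flow is parametrized by $t$ with $\dot t = 1$, and its $x$-projections are exactly the normalized characteristics of Definition~\ref{nontrappingobstacle}. Moreover, along such a bicharacteristic $p(x,\xi) = 1/4$, so by uniform ellipticity of $(g^{jk})$, $|\xi|$ is bounded and bounded below on the whole flow. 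The main observation is that, by Melrose--Sj\"ostrand, any point of $WF_b(u) \cap \{|t-T| < \delta\}$ is reached by a maximally extended generalized bicharacteristic which, by Proposition~\ref{roughWFb}, has to pass through the compact set $K_0 := \overline{B(0,r_0)} \cap \overline{\Omega}$ at some time $t_0 \in (-\delta_0, \delta_0)$.

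The upper bound $|x| < R'$ is then immediate from $|\dot x| = |\nabla_\xi p| \leq C$ on the energy shell $\{p = 1/4\}$, which yields $|x(T)| \leq r_0 + C(T + \delta)$; take $R' := r_0 + C(T+1)$. The lower bound $|x| > R$ is a direct application of the non-trapping assumption to $K_0$: there exists $T^* = T^*(K_0, R)$ such that every normalized characteristic starting in $K_0$ is contained in $\{|x| > R\}$ for all $t > T^*$, so any $T > T^* + 1$ and $\delta < 1$ work.

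For the outgoing condition, once the bicharacteristic is in the region $\{|x| > R_1\}$ for $R_1$ sufficiently large (in particular far from the obstacle), Hamilton's equations $\dot x = \nabla_\xi p$, $\dot \xi = -\nabla_x p$ apply, and $g^{jk} - \delta_{jk} \in S^{-\rho}$ gives
\begin{equation*}
\frac{d}{dt}\bigl(x \cdot \xi\bigr) = \nabla_\xi p \cdot \xi - x \cdot \nabla_x p = 2p(x,\xi) + O(|x|^{-\rho}) = \tfrac{1}{2} + O(R_1^{-\rho}).
\end{equation*}
Taking $R_1$ large enough, $x \cdot \xi$ grows linearly in $t$, while $|\xi|$ is $\tfrac{1}{2} + O(R_1^{-\rho})$ and $|x|$ grows at most linearly with speed $\leq C$. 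A standard asymptotic analysis of the scattering flow (integrate the equations for $x/|x|$ and $\xi/|\xi|$ using the decay of the metric) then shows that $\frac{x \cdot \xi}{|x||\xi|} \to 1$ along the trajectory. In particular, after a further time which can be estimated in terms of $R_1$, $\varepsilon$ and the energy, the outgoing condition $\frac{x}{|x|} \cdot \frac{\xi}{|\xi|} > \varepsilon - 1$ holds. Combining the two required times, we take $T$ larger than both, and choose the corresponding $\delta$.

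The main obstacle is ensuring all these estimates are \emph{uniform} over $K_0$ and over the choice of bicharacteristic, since the generalized flow is not single-valued at gliding/degenerate points. This is handled by a compactness argument: the set of initial data $(x_0, \xi_0)$ with $x_0 \in K_0$, $p(x_0,\xi_0) = 1/4$ is compact, the graph of the Melrose--Sj\"ostrand flow is closed in the appropriate topology (see~\cite{MeSj}, \cite[Sec.~24.3]{Horm3}), and both the non-trapping escape time and the asymptotic convergence to the outgoing region depend upper semicontinuously on the starting point. Thus a single finite time $T$ works uniformly, completing the proof.
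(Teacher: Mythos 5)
Your proof is correct and follows essentially the same strategy as the paper: use $\tau<0$ from Proposition~\ref{roughWFb} to parametrize the generalized flow with $\dot t=1$, propagate the localization at $t=0$ forward via Melrose--Sj\"ostrand invariance of $WF_b$, invoke the non-trapping condition to escape any ball, and use the asymptotic behaviour of the flow at large $|x|$ (the paper delegates this to Appendix~\ref{refoutgoing}, which you re-derive inline) for the outgoing condition, with $R'$ coming from finite propagation speed. Your additional compactness discussion of uniformity is harmless but largely redundant, since Definition~\ref{nontrappingobstacle} already builds in uniformity over compact sets and the Appendix~\ref{refoutgoing} estimates are themselves uniform.
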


\noindent {\it Proof.} Since $ \tau < 0 $ on $ WF_b (u) $ by Proposition~\ref{roughWFb},  we may parametrize  the generalized flow so that $ \dot{t} = + 1 $. Using the non-trapping condition with $ K = \bar{B} (0,r_0) $,  the invariance of $ WF_b (u) $ by the generalized flow~\cite{MeSj} and Proposition~\ref{roughWFb} imply that  $ WF_b (u) \cap \{ |t-T| < \delta \} $ is contained in $ \{ |x| \geq R_T \} $ with $ R_T \rightarrow \infty $ as $ T \rightarrow \infty $. In particular, the $ (x,\xi)$ curves become ordinary geodesics after some time. Since they escape to infinity, they reach any arbitrary outgoing area in finite time (Appendix \ref{refoutgoing}).
In particular, for $ T $ large enough, we have $ |x|>R $ and $ x\cdot \xi > (\varepsilon - 1) |x| |\xi|$. The continuity of the generalized characteristics also implies that $ |x| $ remains bounded over finite time intervals.  This completes the proof.  Notice that in case of non uniqueness of bicharacteristics, ~\cite{MeSj} still applies since, strictly speaking their result states that the wave front is a union of (possibly non unique) generalized bicharacteristics.\finpreuve

\begin{coro} \label{hyppropsing} Let $ R \gg 1 $ and $ \chi \in C_0^{\infty}( B (0,R) ) $ be equal to $1$ near $ \partial \Omega $. Then for  $ T \gg 1 $ independent of $ u_0 \in L^2 $,
$$ \chi e^{- i T \sqrt{P}} \chi_0 u_0 \ \ \mbox{belongs to} \ \ \cap_{k \in \Na} \emph{Dom} (P^k) . $$
\end{coro}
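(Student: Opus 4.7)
The plan is to apply Proposition~\ref{propwf} not only to $u_0 := e^{-it\sqrt{P}}(\chi_0 u_0)$ but to the full family $u_k := e^{-it\sqrt{P}}(P^k \chi_0 u_0)$ for every $k \geq 0$. The derivation of Propositions~\ref{roughWFb} and~\ref{propwf} passes through in this generality: it relies only on finite speed of propagation for $\cos(t\sqrt{P})$ and $\frac{\sin(t\sqrt{P})}{\sqrt{P}}$, a statement valid on compactly supported distributions (as $P^k \chi_0 u_0$ is for $k \geq 1$), together with the Melrose--Sj\"ostrand flow invariance of $WF_b$, which also does not require $L^2$ data. I would choose $T$ large enough for Proposition~\ref{propwf} to apply simultaneously to every $u_k$; since that choice depends only on $R$ and $\mbox{supp}(\chi_0)$, it is independent of $u_0$.

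Writing $v := e^{-iT\sqrt{P}} \chi_0 u_0$, the hypothesis $\mbox{supp}(\chi) \subset B(0,R)$ together with the $WF_b$ conclusion forces each $\chi u_k$ to be $C^{\infty}$ up to the boundary in a spacetime neighborhood of $t = T$, still satisfying the Dirichlet condition inherited from $u_k$. Evaluating at $t = T$, this gives $\chi P^k v = \chi u_k(T,\cdot) \in C^{\infty}(\overline{\Omega})$, compactly supported, with zero trace on $\partial \Omega$, for every $k$. Picking an auxiliary $\tilde{\chi} \in C_0^{\infty}(B(0,R))$ equal to $1$ on $\mbox{supp}(\nabla \chi)$ and applying the same argument to $u_0$ with $\tilde{\chi}$ gives $\tilde{\chi} v \in C^{\infty}(\overline{\Omega})$ as well.

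For fixed $m \geq 1$, I would then write the distributional identity
$$ P^m(\chi v) \ = \ \chi P^m v + [P^m, \chi] v \ = \ \chi u_m(T,\cdot) + [P^m, \chi]\bigl( \tilde{\chi} v \bigr), $$
where the last equality uses that the differential operator $[P^m, \chi]$ has all its coefficients supported in $\mbox{supp}(\nabla \chi) \subset \{\tilde{\chi} \equiv 1\}$. Both summands are smooth and compactly supported, hence in $L^2(\Omega)$. Since $\chi \equiv 1$ near $\partial \Omega$, for each $j \leq m-1$ one has $P^j(\chi v) = P^j v = u_j(T,\cdot)$ in a neighborhood of the boundary, and these traces vanish on $\partial \Omega$ by the Dirichlet condition for $u_j$. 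By the standard elliptic characterization, these are exactly the conditions for $\chi v \in \mbox{Dom}(P^m)$; as $m$ is arbitrary, the corollary follows.

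The delicate point is the rigorous extension of Propositions~\ref{roughWFb} and~\ref{propwf} to the distributional initial data $P^k \chi_0 u_0$ with $k \geq 1$. I would handle this by approximating $u_0 \in L^2$ by a sequence $u_0^{(n)} \in \cap_k \mbox{Dom}(P^k)$ converging to $u_0$ in $L^2$, so that the corresponding $u_k^{(n)}$ are classical solutions for which both propositions apply verbatim, and then passing to the weak-$*$ limit using the continuity of finite speed of propagation and the closedness of the $WF_b$ microlocal inclusion under distributional limits.
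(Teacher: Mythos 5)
Your proof is correct and takes essentially the same approach as the paper: apply Proposition~\ref{propwf} (together with the Dirichlet condition from Appendix~\ref{domainonde}) to each $u_k = e^{-it\sqrt{P}}(P^k \chi_0 u_0)$ to obtain smoothness of $\chi u_k$ up to the boundary near $t=T$, then exploit that $[P^m,\chi]$ has coefficients supported in $\operatorname{supp}(\nabla\chi)$, hence away from $\partial\Omega$; the only cosmetic difference is that you expand $P^m(\chi v)$ in one shot, whereas the paper reaches the same conclusion by iterating on $m$ through a duality pairing $(P^{m}\varphi,\chi v)$. One remark: your closing approximation paragraph is unnecessary, since Propositions~\ref{roughWFb} and~\ref{propwf} are already stated and proved in the paper for $u=e^{-it\sqrt{P}}(P^k\chi_0 u_0)$ with arbitrary $k\in\Na$ --- their proofs rely only on finite speed of propagation and the Melrose--Sj\"ostrand theorem, both of which apply directly to these distributional solutions.
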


\noindent {\it Proof.} A direct consequence of Proposition~\ref{propwf} is that $  \chi e^{- i T \sqrt{P}} \chi_0 u_0  \in C^\infty([T- \delta/2, T+\delta/2]\times \overline{\Omega})$, and consequently $  \chi e^{- i T \sqrt{P}} \chi_0 u_0  $ belongs to $ C_0^{\infty} (\overline{\Omega}) $. Since $ \chi \equiv 1 $ near the boundary and $ e^{-it \sqrt{P}} \chi_0 u_0 $ satisfies the Dirichlet condition on $ \Ra \times \Omega $ it follows that $  \chi e^{- i T \sqrt{P}} \chi_0 u_0 $ satisfies the Dirichlet condition on $ \partial \Omega $. This implies that $  \chi e^{- i T \sqrt{P}} \chi_0 u_0 $ belongs to $ \mbox{Dom}(P) $.  In the very same way, $ \chi e^{- i T \sqrt{P}} ( P^k \chi_0 u_0 ) $ belongs to the domain of $ P $ for all $k$. Now, if $ v $ belongs to $ D (P^2) $,
\begin{equation*}
\begin{aligned}
(P^2 v , \chi e^{- i T \sqrt{P}}  \chi_0 u_0 ) & =  (P v ,  P (\chi e^{- i T \sqrt{P}}  \chi_0 u_0) ) \\
& =  \left( P v ,  [ P,  \chi ] e^{- i T \sqrt{P}}  \chi_0 u_0  + \chi e^{- i T \sqrt{P}}  P \chi_0 u_0  \right)
\end{aligned}
\end{equation*}
where $  [ P,  \chi ] e^{- i T \sqrt{P}}  \chi_0 u_0 \in C_0^{\infty} (\Omega)  $ for it is smooth (we may chose $T $ so that $ e^{-iT \sqrt{P}} \chi_0 u_0 $ is smooth near the support of $ \chi $) and $ [P,\chi] $ vanishes near the boundary. Thus 
$$  [ P,  \chi ] e^{- i T \sqrt{P}}  \chi_0 u_0  + \chi e^{- i T \sqrt{P}}  P \chi_0 u_0 \in D (P) . $$
This implies that $  \chi e^{- i T \sqrt{P}}  \chi_0 u_0 $ belongs to $ D (P^2) $. Iteration of this argument yields the result.
 \finpreuve


We next consider semiclassical estimates. All we need to adapt the proof of the previous section is the following result.

\begin{prop} \label{demandedutravail} Let $ f \in C_0^{\infty} (0,+\infty) $, $ R \gg 1 $ and $ 0 < \varepsilon \ll 1 $.  Let $ N \geq 1 $. One can find an interval $ J \Subset (0,+\infty) $, a symbol $ \chi_+ \in S^{-\infty,0} $ supported in $ \Gamma^+ (R,J,\varepsilon) $ and some time $ T > 0 $ such that
\begin{equation}
 e^{-i T \sqrt{P}} f (h^2P) \chi_0 = \chi_+ (x,hD) e^{-i T \sqrt{P}}  \chi_0 + h^N \scal{x}^{-N} B (h)    \label{resteapoids}
\end{equation}
with $ B (h) $ bounded on $ L^2 $, uniformly in $ h \in (0,1] $.
\end{prop}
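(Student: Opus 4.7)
Proposition~\ref{demandedutravail} plays, in the boundary case, the role of the Egorov identity~\eqref{ideetrick2} in the boundaryless setting. The idea is to replace the Egorov theorem by a semiclassical version of the Melrose--Sj\"ostrand propagation of singularities: the non-trapping condition forces the semiclassical wavefront of $f(h^2P)\chi_0 u_0$ to be carried by the generalized bicharacteristic flow into a fixed compact subset of the outgoing region, where a well-chosen semiclassical operator $\chi_+(x,hD)$ can be made to coincide microlocally with $f(h^2P)$.

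\textbf{Setup and choice of parameters.} First I would fix $\tilde f\in C_0^\infty((0,+\infty))$ with $\tilde f\equiv 1$ on a neighborhood of $\mbox{supp}(f)$, and pick an open interval $J\Subset(0,+\infty)$ with $\mbox{supp}(f)\subset J\subset\{\tilde f\equiv 1\}$. Next, using the non-trapping condition (Definition~\ref{nontrappingobstacle}) applied to the compact set $K=\overline{B(0,r_0)}$ (a ball containing $\mbox{supp}(\chi_0)$ and the obstacle), together with continuity of the generalized flow on bounded time intervals, I would pick $T>0$ so large that the image at time $T$ of $K\times\{|\xi|_g^2\in J\}$ under this flow lies in a compact subset $\mathcal{O}\subset\Gamma^+(R_1,J,\varepsilon_1)\cap\{|x|<R_2\}$ for some $R_1>R$, $\varepsilon_1<\varepsilon$ and $R_2>R_1$. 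Finally, I would set $\chi_+(x,\xi)=f(|\xi|_g^2)\,\phi(x,\xi)$ with $\phi\in S^{0,0}$ supported in $\Gamma^+(R,J,\varepsilon)$ and equal to $1$ on a neighborhood of $\mathcal{O}$; this symbol then belongs to $S^{-\infty,0}$ and is supported in $\Gamma^+(R,J,\varepsilon)$.

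\textbf{Reduction.} Since $f(h^2P)$ commutes with $e^{-iT\sqrt P}$, the identity to prove becomes
\begin{equation*}
\bigl(f(h^2P)-\chi_+(x,hD)\bigr)e^{-iT\sqrt P}\chi_0 = h^N\langle x\rangle^{-N}\widetilde B(h).
\end{equation*}
Because $\chi_+$ is supported far from the obstacle, semiclassical functional calculus in the interior (Helffer--Sj\"ostrand combined with an adaptation of the parametrix of Proposition~\ref{lemmparametrix} to the semiclassical scale) identifies $f(h^2P)$ with $\mbox{Op}_h(f(|\xi|_g^2))$ modulo remainders of order $O(h)$ on the support of $\chi_+$. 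Hence $f(h^2P)-\chi_+(x,hD)$ has principal symbol $f(|\xi|_g^2)(1-\phi(x,\xi))$, which vanishes whenever $|\xi|_g^2\notin\mbox{supp}(f)$ or $(x,\xi)\in\mathcal{O}$.

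\textbf{Main step and conclusion.} The heart of the proof, and the main obstacle, is to establish that, uniformly in $u_0\in L^2$ with $\|u_0\|_{L^2}\le 1$,
\begin{equation*}
WF_h\bigl(e^{-iT\sqrt P}f(h^2P)\chi_0 u_0\bigr)\subset\mathcal{O}.
\end{equation*}
The initial semiclassical wavefront is contained in $K\times\{|\xi|_g^2\in\mbox{supp}(f)\}$; a semiclassical version of the Melrose--Sj\"ostrand propagation of singularities theorem (essentially Proposition~\ref{propwf} formulated at the semiclassical scale), combined with our choice of $T$, forces the propagated wavefront into $\mathcal{O}$. Adapting Melrose--Sj\"ostrand to propagate $h$-wavefront sets through generalized bicharacteristic reflection is the delicate point. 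Granted this, pseudo-locality of semiclassical PDOs shows that $\mbox{Op}_h(f(|\xi|_g^2)(1-\phi))e^{-iT\sqrt P}\chi_0 u_0 = O(h^N)$ in $L^2$: the factor $f(|\xi|_g^2)$ kills the part of $e^{-iT\sqrt P}\chi_0 u_0$ at semiclassical frequencies outside $\mbox{supp}(f)$, while $(1-\phi)$ vanishes on the wavefront of what remains. Up to $O(h^\infty)$-smoothing residuals, the output is supported in $\{|x|<R_2\}$, so multiplication by the weight $\langle x\rangle^{-N}$ gives the required form.
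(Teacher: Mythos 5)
There is a genuine gap in your argument, and it is exactly the issue the paper's proof is engineered to avoid. You write that the ``heart of the proof'' is to establish a uniform semiclassical wavefront bound $WF_h\bigl(e^{-iT\sqrt P}f(h^2P)\chi_0 u_0\bigr)\subset\mathcal O$ via ``a semiclassical version of the Melrose--Sj\"ostrand propagation of singularities theorem,'' and you acknowledge that ``adapting Melrose--Sj\"ostrand to propagate $h$-wavefront sets through generalized bicharacteristic reflection is the delicate point.'' But that semiclassical boundary propagation theorem is not available in this paper and is not a standard black box one can invoke; proving it would itself be a substantial undertaking, comparable in difficulty to the whole section. Proposition~\ref{propwf} is a \emph{classical} $WF_b$ statement about a fixed distribution $u=e^{-it\sqrt P}(P^k\chi_0 u_0)$, not a semiclassical one, and the paper exploits this deliberately: it first uses Lemmas~\ref{lemmEgorovalinfini} and~\ref{lemmata2} to move the spatial support away from the boundary (this part is semiclassical but takes place entirely in the interior, where Egorov/pseudodifferential calculus applies), then commutes $f(h^2P)$ past a cutoff vanishing near the boundary so that the remaining microlocalization can be extracted from the \emph{classical} wavefront bound of Proposition~\ref{propwf} via Corollary~\ref{hyppropsing}. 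The semiclassical parameter only ever enters through the spectral cutoff $f(h^2P)$ and is handled by symbolic calculus far from $\partial\Omega$; the hard boundary propagation is left entirely in the classical regime. Your route collapses these two layers into one, which is precisely where the difficulty lies.

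A secondary but also real problem is the spatial weight $\langle x\rangle^{-N}$ in the remainder. You state that ``up to $O(h^\infty)$-smoothing residuals, the output is supported in $\{|x|<R_2\}$, so multiplication by the weight $\langle x\rangle^{-N}$ gives the required form,'' but this is not automatic: the parametrix errors for $f(h^2P)$ and the propagator contributions have no a priori spatial localization, and the paper explicitly flags that ``getting this improvement on the remainder is crucial and requires a little bit of work.'' Lemma~\ref{lemmEgorovalinfini} is devoted precisely to showing that the portion of $e^{-iT\sqrt P}f(h^2P)\chi_0$ supported outside a fixed ball is $O(h^N)$ \emph{with} the weight $\langle x\rangle^N$, by iterating a Duhamel/Egorov argument in the scale $S^{m,\mu}$. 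Without an argument of this type, your remainder is only $O(h^N)$ in operator norm and does not have the $\langle x\rangle^{-N}$ factor needed downstream in the proof of Theorem~\ref{nonspeclocobst}.
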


Informally, this says that $ e^{-i T \sqrt{P}} f (h^2P) \chi_0 $ is microlocalized in $  \Gamma^+ (R,J,\varepsilon)  $ mod $ h^{\infty} $. Note however that the remainder is not only $ O (h^{\infty}) $, but it also decays spatially. Getting this improvement on the remainder is crucial and requires a little bit of work.

We start with the following lemma which also holds for the Klein-Gordon and semiclassical Schr\"odinger equations.
We use the propagator (\ref{propagateurinterchangeable}).

\begin{lemm} \label{lemmEgorovalinfini} Let $ T > 0 $, $ N > 0 $, $ \chi_0 \in C_0^{\infty} (\overline{\Omega} ) $ and $ f \in C_0^{\infty} (0,+\infty) $.  Assume that $ \chi_0 $ is supported in a ball $ B (0,R) $ containing the obstacle. There exists $ C $ depending only on $f$ and the metric $g$  such that for every symbol $ a \in S^{m,\mu} $ (for some $ \mu, m \in \Ra $) and any cutoffs  $ \chi_1, \chi_2 \in C_0^{\infty} (\Ra^n) $ both equal $ 1 $ near $ B (0,R + C T) $, one has
$$ \Big\|  (1-\chi_1) a (x,hD) (1-\chi_2) U_h (t) f (h^2 P) \chi_0  \Big\| \lesssim h^N , $$
uniformly with respect to $ t \in [-T,T] $.
\end{lemm}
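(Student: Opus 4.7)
The plan is to reduce the lemma to the refined statement
$$
\bigl\|(1-\chi_2) U_h(t) f(h^2P) \chi_0\bigr\|_{L^2 \to L^2} = O(h^N)
$$
uniformly for $t\in[-T,T]$ and arbitrary $N$. Once this is established, the outer factor $(1-\chi_1)a(x,hD)$ is absorbed using the standard $h$-pseudodifferential bound $\|(1-\chi_1)a(x,hD)\|_{L^2\to L^2} = O(h^{-M})$ for some $M=M(m,\mu)$ (via Calder\'on--Vaillancourt applied to $a(x,h\xi)$), and one then replaces $N$ by $N+M$.

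To prove the refined estimate, I would first approximate $f(h^2P)\chi_0$ by a semiclassical pseudodifferential operator via the Helffer--Sj\"ostrand formula, in the spirit of the rescaled parametrix of Proposition~\ref{lemmparametrix} (with the small parameter $h$ now replacing $\lambda^{1/2}$). This yields $f(h^2P)\chi_0 = b(x,hD) + h^N R_h$ with $\|R_h\|_{L^2\to L^2} = O(1)$ and $b=b(x,\xi,h)$ having support in the compact phase-space region
$$
K := \operatorname{supp}(\chi_0) \times \{\xi : p_0(x,\xi) \in \operatorname{supp}(f)\} \subset B(0,R) \times \{|\xi|\le C_0\}
$$
for some $C_0 = C_0(f,g)$, where $p_0(x,\xi) = \sum g^{jk}(x)\xi_j\xi_k$ is the principal symbol of $P$.

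Next, I would conjugate $(1-\chi_2)$ by $U_h(t)$ via a semiclassical Egorov-type expansion: for each $t$, $U_h(-t)(1-\chi_2)U_h(t) = c_t(x,hD) + O(h^N)$, the principal symbol being $c_t(x,\xi) = (1-\chi_2)(\phi_t^x(x,\xi))$ with $\phi_t = (\phi_t^x,\phi_t^\xi)$ the Hamiltonian flow of the classical symbol $\psi_h\circ p_0$. On $K$ the position velocity $\nabla_\xi \psi_h(p_0) = \psi_h'(p_0)\nabla_\xi p_0$ is uniformly bounded by some $C = C(f,g)$; for this choice of $C$, the flow maps $K$ into $B(0,R+CT)$ for $|t|\le T$, where $\chi_2\equiv 1$, hence $c_t\cdot b \equiv 0$. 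By symbolic composition $c_t(x,hD)\,b(x,hD) = O(h^N)$ in operator norm, so unwinding
$$
(1-\chi_2)U_h(t)\, b(x,hD) = U_h(t)\, c_t(x,hD)\, b(x,hD) + O(h^N) = O(h^N),
$$
and the remainder $h^N R_h$ is absorbed using unitarity of $U_h$.

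The main obstacle is justifying the Egorov step when $\mathcal K\ne\emptyset$, since the Hamiltonian flow of $\psi_h(p_0)$ is not globally defined there and generalized bicharacteristics must be used instead (precisely what the rest of Section~\ref{boundary} is devoted to). Fortunately, this specific lemma admits a direct proof that bypasses Egorov altogether. For the wave and Klein--Gordon cases, pseudolocality of $f(h^2P)$ gives $f(h^2P)\chi_0 = \tilde\chi f(h^2P)\chi_0 + O(h^\infty)$ with $\tilde\chi \in C_0^\infty$ equal to $1$ near $B(0,R)$, after which finite speed of propagation for $\cos(t\sqrt P)$ and $\sin(t\sqrt P)/\sqrt P$ on $\Omega$ with Dirichlet conditions confines the evolution to $B(0,R+|t|+\varepsilon)$. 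For the Schr\"odinger case, one runs a direct positive commutator estimate with a multiplier $\rho(x)$ supported outside $B(0,R+CT)$: the principal symbol of $h^{-1}[\psi_h(h^2P),\rho]$ is the Poisson bracket $\{\psi_h(p_0),\rho\}$, controlled by $C|\nabla\rho|$ on the energy shell $\{p_0 \in \operatorname{supp}(\tilde f)\}$ where $\tilde f\in C_0^\infty$ equals $1$ near $\operatorname{supp}(f)$, and Duhamel together with the resulting differential inequality for $\partial_t \|\rho U_h(t) f(h^2 P) \chi_0 u_0\|^2$ yields the $O(h^N)$ bound by iteration.
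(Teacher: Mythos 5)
Your proposal has three genuine gaps.

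First, the reduction in your opening step is not valid. You propose to strip off the outer factor by the bound $\|(1-\chi_1)a(x,hD)\|_{L^2\to L^2}=O(h^{-M})$, but for $a\in S^{m,\mu}$ with $\mu>0$ — which the lemma explicitly allows — this is false: the spatial weight $\scal{x}^{\mu}$ makes $(1-\chi_1)a(x,hD)$ unbounded on $L^2$, and no power of $h$ repairs that. The paper handles the spatial growth by an induction on $\mu$: using the Duhamel formula for $Q_\mu = (1-\chi_1)a(x,hD)(1-\chi_2)$ and the observation that $[\tilde{\psi}_h(h^2P),Q_\mu]$ has symbol of degree $\mu-1$ in $x$, so that each integration by parts in time lowers $\mu$ by one. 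You would need some such mechanism, or at least a proof of the weighted/Sobolev version of your claimed refined estimate, before the outer factor can be absorbed.

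Second, you correctly identify that the Egorov conjugation of $(1-\chi_2)$ fails near the boundary, since the flow starting from $ K\subset B(0,R)\times\{|\xi|\lesssim1\}$ can reach $\partial\Omega$. But you stop there and declare Egorov impossible, whereas the paper shows it can be applied by conjugating the \emph{other} localizer. After pseudodifferential calculus, the operator $(1-\chi_1)a(x,hD)(1-\chi_2)\tilde f(h^2P)$ equals $a_h(x,hD)(1-\chi_3)+h^N B_N(h)$ with $\operatorname{supp}(a_h)\subset p^{-1}(\operatorname{supp}\tilde f)\cap\{|x|\geq R+CT\}$; then
$$ Q_0\,\tilde U_h(t)f(h^2P)\chi_0 = \tilde U_h(t)\Big(\tilde U_h(-t)\,a_h(x,hD)(1-\chi_3)\,\tilde U_h(t)\Big)\chi_0 + O(h^N), $$
and the Egorov conjugation here transports $\operatorname{supp}(a_h)$ to $\phi^t(\operatorname{supp}(a_h))\subset\{|x|\geq R+CT-C|t|\}\subset\{|x|\geq R\}$ for $|t|\leq T$, which stays disjoint from both $\partial\Omega$ and $\operatorname{supp}\chi_0$. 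So the \emph{standard} Egorov theorem does apply, and the product with $\chi_0$ vanishes to $O(h^\infty)$. This is the key trick you miss.

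Third, your alternate direct route via finite speed of propagation is a reasonable idea for the wave and Klein–Gordon cases (once you account for the nonlocal $\sqrt P$ by writing $\sqrt{P}f(h^2P)=h^{-1}g(h^2P)$ with $g\in C_0^\infty$), and it would indeed be more elementary than the paper's argument there. But the positive-commutator sketch for Schr\"odinger does not close. With a fixed multiplier $\rho$, one gets $\partial_t\|\rho v\|^2 = 2h\,\mathrm{Im}\langle\rho v,[\rho,P]v\rangle$, and the Poisson bracket $\{p_0,\rho\}=2g\xi\cdot\nabla\rho$ has no sign, so all this yields is $|\partial_t\|\rho v\|^2|\lesssim\|\rho v\|$ and Gronwall gives only linear growth in $t$, far from $O(h^N)$. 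One would need an iterated or moving family of nested cutoffs (a genuine semiclassical finite-speed-of-propagation argument), which you do not supply — or one should just use the same Egorov-on-the-outer-localizer device as for the wave equation, which the paper's formulation with general $\psi_h$ covers uniformly.
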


A concrete application of this lemma is that, for a given $T$, if $ \chi_1 $ is a smooth cutoff equal to $1$ on $ B (0,R+CT) $, one has
\begin{equation}
  \scal{x}^N (1-\chi_1) e^{-iT \sqrt{P}} f (h^2 P) \chi_0 = O_{{\mathcal L}^2} (h^N) ,  \label{exempleautiliser}
\end{equation}  
as long as $ |t| \leq T $. In other words, $ (1-\chi_1) e^{-iT \sqrt{P}} f (h^2 P) \chi_0  $ is of the same form as the remainder of (\ref{resteapoids}).

\noindent {\it Proof of Lemma~\ref{lemmEgorovalinfini}.} Without loss of generality we may assume that $ \mu $ is a nonnegative integer. We then proceed   by induction on $ \mu$. The induction argument itself is simple once the following has been observed: since we are considering the spectrally localized propagator $ U_h (t) f (h^2 P) $ we may freely modify the definition of $ \psi_h $ (see~\ref{defindepsih}) away from the support of $f$ and write
$$ U_h (t) f (h^2 P) = \underbrace{e^{- \frac{i}{h} t \tilde{\psi}_h (h^2 P)}}_{=: \tilde{U}_h (t)}  f (h^2 P)$$
with $ \tilde{\psi}_h = \psi_h $ near the support of $ f $ and such that $ \tilde{\psi}_h $ is a smooth symbol (bounded in $h$). We may keep $ \psi_h (\lambda) = \lambda $ for the Schr\"odinger equation but we wish to avoid the singular behavior at $ \lambda = 0 $ of $ \sqrt{\lambda} $ or $ \sqrt{\lambda + h^2} $. The interest of this modification is that, away from the boundary, $ \tilde{\psi}_h (h^2 P) $ has a nice pseudo-differential expansion, say with symbol in $ S^{2,0} $. Setting for simplicity
$$ Q_{\mu} =   (1-\chi_1) a (x,hD) (1-\chi_2) $$
 which is supported far from the boundary, we exploit the Duhamel formula
\begin{equation}
 Q_{\mu} \tilde{U}_h (t) =  \tilde{U}_h (t)  Q_{\mu}   + \frac{i}{h} \int_0^t  \tilde{U}_h (t-s) [ \tilde{\psi}_h ( h^2 P ) , Q_{\mu} ] \tilde{U}_h (s)  ds  \label{DuhamelEgorov}
\end{equation}
by observing  that pseudo-differential calculus in $ S^{m,\mu} $ classes shows that
 $$  [ \tilde{\psi}_h (h^2 P) , Q_{\mu}  ] = Q_{\mu-1} + O_{{\mathcal L}(L^2)} (h^N)  $$ with $ Q_{\mu-1} $ of the same form as $ Q_{\mu} $ but with a symbol in $ S^{m + 1, \mu-1} $. This allows to apply the induction assumption to $   Q_{\mu - 1} \tilde{U}_h (s) f (h^2 P) \chi_0 $. The first term in the right hand side of (\ref{DuhamelEgorov}) is harmless for
$$  Q_{\mu} f (h^2 P) \chi_0 = O_{{\mathcal L}(L^2)} (h^{\infty}), $$
since $ \chi_0 $ and $ Q_{\mu} $ have disjoint supports.
It is thus sufficient to prove the result for $ \mu = 0 $, which we do now. Let $ \tilde{f} \in C_0^{\infty} (0,+\infty) $ be equal to $1$ near $ \mbox{supp} (f) $. By standard pseudo-differential calculus and {functional calculus}, one can write, for any $N$,
$$ (1-\chi_1) a (x,hD) (1-\chi_2) \tilde{f} (h^2 P) =   a_h (x,hD) (1-\chi_3)+ h^N  B_N (h) $$
with $ 1 - \chi_3 \equiv 1 $ near $ \mbox{supp} (1-\chi_2) $, $ ( B_N (h) )_{0 < h \leq 1} $  a bounded family of bounded operators on $ L^2 $ and $ ( a_h )_{0< h \leq 1}  $  a  bounded family of  $ S^{-\infty , \mu} (\Ra^{2n})  $ such that
\begin{equation}
 \mbox{supp} (a_h) \subset p^{-1} (\mbox{supp}(\tilde{f})) \cap \{ |x| \geq R + CT \} . \label{controlevitesse}
\end{equation} 
Here the constant $ C $ is such that the Hamiltonian flow of $\psi_h (p) $ satisfies $ | x (t,y,\eta) - y | \leq C |t| $ on the energy shell $ p^{-1} \big( \mbox{supp} (\tilde{f}) \big) $ as long as one does not touch the boundary.
Then
\begin{equation*}
\begin{aligned}
 Q_0  \tilde{U}_h (t) f (h^2 P) \chi_0 & =  a_h (x,hD) (1-\chi_3) \tilde{U}_h (t) \chi_0 +h^N  B_N (h) \tilde{U}_h (t) \chi_0 \\
 & =  \tilde{U}_h (t) \Big( \tilde{U}_h (-t)  a_h (x,hD) (1-\chi_3)  \tilde{U}_h (t) \Big)\chi_0 + O_{{\mathcal L}^2} (h^N) .
\end{aligned}\end{equation*}
By the usual Egorov Theorem, one can write the parenthese above as
 $$ a^t_h (x,hD) (1-\chi_4) + O_{{\mathcal L}( L^2) } (h^N)  $$
with
\begin{equation}
 \mbox{supp} (a^t_h ) \subset \phi^t \big( \mbox{supp} (a_h) \big)  \label{supportaestimer}
\end{equation} 
and $ \chi_4 $ any smooth cutoff equal to $1$ near the obstacle and such that $ 1 - \chi_4 $ equals $1$ near the projection in $ \Ra^n $ of $ \mbox{supp} (a^t_h) $. This can be done as long as the right hand side of (\ref{supportaestimer}) does not reach the boundary. From our choice of $C$ in (\ref{controlevitesse}), we  have
$$  \phi^t \big( \mbox{supp} (a_h) \big) \subset \{ |x| \geq R + CT - C |t| \} $$
where the right hand side is disjoint from the boundary and from $ \mbox{supp} (\chi_0) $ for $ |t| \leq T $.  It follows by pseudo-differential calculus that
$$ a^t_h (x,hD) (1-\chi_4) \chi_0 = O_{{\mathcal L}(L^2)} (h^{\infty}) $$
since $ a^t_h $ and $ \chi_0 $ have disjoint supports. This completes the proof. \finpreuve

\begin{lemm} \label{lemmata2} Let $ R \gg 1 $ and $ \chi \in C_0^{\infty} (B(0,R)) $ be equal to $ 1 $ near the obstacle. Then for each large enough $ T > 0 $ 
$$ \Big\| \chi e^{-i T \sqrt{P}} f (h^2 P) \chi_0 \Big\|  = O (h^{\infty}) . $$
\end{lemm}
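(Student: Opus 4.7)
The plan is to combine the non-trapping assumption with a semiclassical version of the Melrose-Sj\"ostrand propagation of singularities. Since $e^{-iT\sqrt P}$ and $f(h^2P)$ commute, rewrite the operator as $\chi f(h^2P) e^{-iT\sqrt P}\chi_0$. By pseudo-locality of the semiclassical functional calculus, $f(h^2P)\chi_0$ agrees modulo an $O(h^\infty)$ operator-norm error with a compactly supported semiclassical pseudodifferential operator $A(h)$ whose essential support lies in
\[ \Sigma_0 := \{(x,\xi) \in T^*\overline{\Omega} \,:\, x \in \mathrm{supp}(\chi_0),\ p(x,\xi) \in \mathrm{supp}(f)\}. \]
Thus it suffices to prove $\|\chi f(h^2P)\, e^{-iT\sqrt P} A(h)\|_{L^2 \to L^2} = O(h^\infty)$.

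Next I would exploit the non-trapping assumption (Definition~\ref{nontrappingobstacle}): for $T$ large enough, every normalized generalized bicharacteristic issued from a point of $\Sigma_0$ satisfies $|x(T)|>R$, so the time-$T$ flow-out of $\Sigma_0$ under the generalized bicharacteristic flow is disjoint from $\mathrm{supp}(\chi)\subset B(0,R)$, and a fortiori from the essential support of the pseudodifferential operator $\chi f(h^2P)$.

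The remaining task is to upgrade this phase-space disjointness to an operator-norm estimate of order $O(h^\infty)$. The cleanest route is by contradiction: if the estimate $O(h^N)$ fails for some $N$, extract from a violating sequence $(h_k,u_k)$ a semiclassical defect measure $\mu$ attached to $\bigl(e^{-iT\sqrt P} A(h_k) u_k\bigr)_k$. Then $\mu$ is supported on the characteristic variety $\{p\in\mathrm{supp}(f)\}$, initially lies above $\mathrm{supp}(\chi_0)$, and by the Melrose-Sj\"ostrand propagation theorem it is invariant under the generalized bicharacteristic flow (its compatibility with the Dirichlet propagator $e^{-iT\sqrt P}$ follows from the invariance of $WF_b$ recorded in Proposition~\ref{propwf}). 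Consequently at time $T$ it must be supported in $\{|x|>R\}$, whereas the failure of the operator-norm bound forces $\mu$ to charge $\mathrm{supp}(\chi)\subset B(0,R)$; contradiction.

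The main obstacle is the rigorous transport of the defect measure across glancing and gliding rays at $\partial\Omega$, where the classical Egorov construction breaks down. This is precisely what the Melrose-Sj\"ostrand analysis handles at the level of $WF_b$; the passage to semiclassical defect measures is by now standard and mirrors the arguments used in the study of local smoothing and controllability for wave equations in the exterior of obstacles. As an alternative, more quantitative approach, one can combine Corollary~\ref{hyppropsing}---which gives, via the closed graph theorem, uniform boundedness of $P^k \chi e^{-iT\sqrt P}\chi_0$ on $L^2$ for every $k$---with the identity $f(h^2P)=h^{2k} P^k \tilde f(h^2P)$ for $\tilde f(\lambda)=\lambda^{-k}f(\lambda)\in C_0^\infty(0,\infty)$, carefully tracking the spatial cutoff through the commutation, to recover the $O(h^\infty)$ bound without appealing to defect measures.
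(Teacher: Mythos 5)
Your ``alternative, more quantitative approach'' at the end is essentially the paper's proof. The authors first commute $e^{-iT\sqrt P}$ with $f(h^2P)$, then insert an intermediate cutoff $\tilde\chi$ (equal to $1$ near $\mathrm{supp}\,\chi$ and supported in $B(0,R)$), split
$$ \chi e^{-iT\sqrt P}f(h^2P)\chi_0 \;=\; \chi\, f(h^2P)\,\tilde\chi\, e^{-iT\sqrt P}\chi_0 \;+\; \chi\, f(h^2P)\,(1-\tilde\chi)\, e^{-iT\sqrt P}\chi_0 ,$$
kill the second term by disjoint supports (and the fact that $1-\tilde\chi$ vanishes near the boundary), and for the first term use exactly the factorization $f(h^2P)=\bigl(P^{-k}f(h^2P)\bigr)P^k$, which is $O(h^{2k})$ times the bounded map $P^k\tilde\chi e^{-iT\sqrt P}\chi_0$ furnished by Corollary~\ref{hyppropsing} and the uniform boundedness principle. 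This is the intended argument; your sentence ``carefully tracking the spatial cutoff through the commutation'' is precisely the intermediate-cutoff decomposition above, and you should write it out.

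Your primary route through semiclassical defect measures has a genuine gap. A defect-measure contradiction argument of the sort you sketch only yields the \emph{qualitative} conclusion $\|\chi f(h^2P)e^{-iT\sqrt P}A(h)\|\to 0$, i.e.\ an $o(1)$ bound; it does not give $O(h^N)$ for any fixed $N$, let alone $O(h^\infty)$. Defect measures capture mass at scale $h^0$, and assuming the failure of an $O(h^N)$ bound does not produce a normalized sequence to which the defect-measure machinery applies. Moreover, the invariance of $WF_b$ in Proposition~\ref{propwf} concerns the classical (non-semiclassical) wavefront set of the fixed solution $u=e^{-it\sqrt P}(P^k\chi_0 u_0)$; it is not a semiclassical propagation statement for defect measures of an $h$-dependent family of solutions. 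Obtaining $O(h^\infty)$ directly by semiclassical propagation near a boundary (glancing, gliding) requires a Melrose--Taylor or Lebeau-type parametrix, which is a substantially heavier input than anything the paper uses. The whole point of Corollary~\ref{hyppropsing} and the factorization trick is to \emph{avoid} this: one applies classical Melrose--Sj\"ostrand propagation of $WF_b$ to a single distribution, upgrades it to smoothness near the boundary, and converts smoothness into $O(h^\infty)$ gain via $f(h^2P)=h^{2k}P^k\tilde f(h^2P)$.
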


\noindent {\it Proof.}  Let $  \tilde{\chi} \in C_0^{\infty}  (B(0,R))$ be equal to $1$ near the support of $ \chi $ and write
$$  \chi e^{-i T \sqrt{P}} f (h^2 P) \chi_0 =  \chi f (h^2 P) \tilde{\chi} e^{-i T \sqrt{P}} \chi_0  + \chi  f (h^2 P) (1-\tilde{\chi}) e^{-i T \sqrt{P}}  \chi_0 . $$
Using that $ \chi $ and $ 1 - \tilde{\chi} $ have disjoint supports and that $ 1 - \tilde{\chi} $ vanishes near the boundary, one has
$$ ||  \chi  f (h^2 P) (1-\tilde{\chi}) || = O (h^{\infty}) .  $$ 
On the other hand,  Corollary~\ref{hyppropsing} allows to choose $T$ such that, for each $ u_0 \in L^2 (\Omega) $, $  \tilde{\chi} e^{-i T \sqrt{P}} \chi_0 u_0  $ belongs to $\mbox{Dom} (P^k) $ for all $k$. Thus
$$ f (h^2 P ) \tilde{\chi} e^{-i T \sqrt{P}} \chi_0 u_0 = \underbrace{P^{-k}f (h^2P )}_{O (h^{2k})}  \underbrace{ P^k  \tilde{\chi} e^{-i T \sqrt{P}} \chi_0 u_0 }_{ \in L^2} . $$
This shows that for any $N$ and any $ u_0 \in L^2 $, $ h^{-N} \chi e^{-i T \sqrt{P}} f (h^2 P) \chi_0 u_0 $ is bounded in $ L^2 $ for  $  h \in (0,1]$.  The result follows by uniform boundedness principle.
\finpreuve

\noindent {\bf Proof of Proposition~\ref{demandedutravail}.} By Lemmata~\ref{lemmEgorovalinfini} and~\ref{lemmata2}, one can select  $ T \gg 1 $ and a smooth cutoff $ \varrho = \varrho (x) $ supported in an annulus as far from the boundary as we wish such that, for any $N$,
$$ e^{-iT \sqrt{P}} f (h^2 P) \chi_0 = \varrho e^{-iT \sqrt{P}} f (h^2 P) \chi_0 +  \scal{x}^{-N} O_{{\mathcal L} (L^2)} (h^N) . $$
By pseudo-differential calculus, using that $ \varrho $ vanishes near the boundary, we may also select $ \tilde{\varrho} $ equal to $1$ near the support of $ \varrho$ and supported in another annulus far from the boundary so that
$$ \varrho e^{-iT \sqrt{P}} f (h^2 P) \chi_0  =  \varrho  f (h^2 P) \tilde{\varrho} e^{-iT \sqrt{P}} \chi_0 +  \scal{x}^{-N} O_{{\mathcal L} (L^2)} (h^N)  . $$
We next exploit Proposition~\ref{propwf} which shows that for some other cutoff $ \varrho_0 $ supported in an annulus far from the boundary and some classical pseudo-differential with symbol in $ S^{0,0} $ supported in the indicated outgoing area
$$  \tilde{\varrho} e^{-iT \sqrt{P}}  \chi_0 = a (x,D) \varrho_0 e^{-iT \sqrt{P}}  \chi_0 + {\mathcal R}   $$
with $ {\mathcal R} $ bounded from $ L^2 $ to $ C_0^{\infty} (\Omega) $ (with support contained in $ \mbox{supp} (\tilde{\varrho}) $). In particular
$$ f (h^2 P) {\mathcal R} = \scal{x}^{-N} O_{{\mathcal L} (L^2)} (h^N)  . $$
Therefore, we obtain
$$  \varrho  f (h^2 P) \tilde{\varrho} e^{-iT \sqrt{P}} \chi_0 =  \Big( \varrho  f (h^2 P) a (x,D) \varrho_0 \Big)  e^{-iT \sqrt{P}} \chi_0 +  \scal{x}^{-N} O_{{\mathcal L} (L^2)} (h^N) $$
where the bracket is of the form $ \chi (x,hD,h) $ with $ \chi $ supported in the outgoing region, plus a remainder decaying fast in $x$ and $h$.   Picking $ \chi_+ $ supported in the same region and equal to $1$ near $ \mbox{supp} (\chi (.,.,h)) $. The result follows then easily from pseudo-differential symbolic calculus.
\finpreuve

\noindent {\bf Proof of Theorem~\ref{nonspeclocobst} (Wave equation).} We start again from the decomposition~\ref{decompositionareutiliser} which holds also for obstacles. We  work again with $ t \geq 0 $ (we set $s=t$). The only term that cannot be handled as in the proof of Proposition~\ref{pourconclusionnontrapping} is 
$$ W (t) A^0 W (-t)^* = W (t- T) e^{-i \frac{T}{2} \sqrt{P}} A_0 W (-t)^*   $$ which we now split into
$$  W (t-T) \chi_+ (x,hD) e^{-i \frac{T}{2} \sqrt{P}} \chi_0 W (-t)^* +   h^N W (t-T) \scal{x}^{-N} B_N (h) W (-t)^*    $$
using Proposition~\ref{demandedutravail} (with $ T/ 2 \gg 1 $ instead of $T$!). By application of Proposition~\ref{propIsozakiKitada} to $ W (t-T)  \chi_+ (x,hD) $ for $ t \geq T $ and using (\ref{lossinh}) for the remainder term, we obtain as in Proposition~\ref{pourconclusionnontrapping} that $ W (t) A_0 W (-t)^* = O (\scal{t}^{-\nu^{\prime}}) $ hence that
$$ \Big\| \scal{x}^{-\nu} f (h^2 P) e^{- i t \sqrt{P}} \scal{x}^{-\nu}  \Big\|  \leq C \scal{t}^{-\nu^{\prime}} . $$
Using next a dyadic partition of unity  
\begin{equation}
 1 = f_0 (\lambda) + \sum_{\genfrac{}{}{0pt}{2}{h = 2^{k}}{k \in \Na}} f (h^2 \lambda) , \qquad \lambda \geq 0, \label{partitiondyadique}
\end{equation}
with $ f_0 \in C_0^{\infty} (\Ra) $ and $ f \in C_0^{\infty} (0,+\infty) $, the result of Proposition 6.1 of \cite{Bouc1} remains valid for obstacles and leads to
\begin{equation}
 \Big\| \scal{x}^{-\nu} (1- f_0)(P) e^{- i t \sqrt{P}} \scal{x}^{-\nu}  \Big\|  \leq C \scal{t}^{-\nu^{\prime}} ,   \label{ondeobshf0} 
\end{equation} 
and
\begin{equation}
 \Big\| \scal{x}^{-\nu} (1 - f_0) (P) \sqrt{P}^{-1} e^{- i t \sqrt{P}} \scal{x}^{-\nu}  \Big\|_{L^2 \rightarrow D (P^{1/2})}  \leq C \scal{t}^{-\nu^{\prime}} . 
 \label{ondeobshf1} 
\end{equation} 
   Technically, the adapation to obstacles of \cite[Prop. 6.1]{Bouc1} can be made by the same pseudo-differential approximation of $ f (h^2 P) $ and by modifying, as we may, the Japanese bracket $ \scal{x} $ so that it equals $1$ near the obstacle. Using that $ D (P^{1/2}) = H^1_0 (\Omega) $, and combining then (\ref{ondeobshf0}) and (\ref{ondeobshf1}) together with Theorem~\ref{theoremededecroissance}, we get the result. \finpreuve


\subsection{The Schr\"odinger equation}
In this part, we explain what to modify in the previous paragraph to handle the Schr\"odinger equation. To turn the problem into a non semiclassical one and be able to use the Melrose-Sj\"ostrand theorem, we use the following trick of Lebeau \cite{Lebe}. We select first a function $ f \in C_0^{\infty} (0,+\infty) $, which in the end will be taken as in the partition of unity (\ref{partitiondyadique}), and then pick $ \tilde{f} \in C_0^{\infty} (0,+\infty) $ such that $ \tilde{f} \equiv 1 $ near the support of $f$. For $ \chi_0 \in C_0^{\infty} (\Ra^n) $ equal to $1$ on a ball containing the obstacle and $ u_0 \in L^2 (\Omega) $, one introduces
$$  \sum_{h_j = 2^{-j}} e^{-i \frac{s}{h_j}} e^{-ith_jP} \tilde{f} (h_j^2 P) \chi_0 u_0.  $$
By quasi-orthogonality, it is standard to see that the sum converges in $ C (\Ra^2 , L^2 (\Omega)) $. More generally, we shall consider for $ k \geq 0 $
$$ U (s,t,x) = \sum_{h_j = 2^{-j}} e^{-i \frac{s}{h_j}} e^{-ith_jP} \tilde{f} (h_j^2 P) P^k \chi_0 u_0 $$
which converges
in the distributions sense and satisfies
$$ (D_s D_t - P ) U = 0  $$
Moreover $ U $ has a restriction ot $ \Ra_s \times \Ra_t  \times \partial \Omega $ where it satisfies the Dirichlet condition. This can be checked similarly to the previous paragraph by using the arguments of Appendix~\ref{domainonde} with $ e^{-ithP} $ instead of $ e^{-it\sqrt{P}} $.
\begin{prop} \label{flowMSSchrod}  Assume that $ \emph{supp} (\tilde{f}) \subset [a,b] $ with $ 0 < a < b $. If $ (x,s,t,\xi,\sigma,\tau) $ belong to $ WF_b (U) $ then
$$ \tau < 0 , \qquad \sigma < 0 , \qquad  a \leq \frac{\tau}{\sigma} \leq b$$
\end{prop}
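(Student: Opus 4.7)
The plan is to exploit the fact that each summand of $U$ has its tangential $(s,t)$-frequencies frozen on an explicit set contained in the claimed cone, and then to conclude via a tangential Fourier multiplier in $(s,t)$ alone, which automatically tests $WF_b$ both in the interior and at the boundary since $s$ and $t$ are coordinates along $\partial\Omega\times\Ra^2$.

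First I would verify the following spectral identity on each summand. Setting $U_j := e^{-is/h_j}e^{-ith_jP}\tilde{f}(h_j^2 P)P^k\chi_0 u_0$ and using the spectral resolution $P=\int\lambda\,dE_P(\lambda)$, a direct Fourier computation in $(s,t)$ shows that for any symbol $\Phi(\sigma,\tau)\in S^0(\Ra^2)$,
$$
\Phi(D_s,D_t)U_j \ =\ e^{-is/h_j}\int e^{-ith_j\lambda}\,\Phi\bigl(-h_j^{-1},-h_j\lambda\bigr)\,\tilde{f}(h_j^2\lambda)\,dE_P(\lambda)\,P^k\chi_0 u_0.
$$
The right-hand side vanishes identically whenever $\Phi$ vanishes on the closed cone
$$
\mathcal{C}\ :=\ \bigl\{(\sigma,\tau)\in\Ra^2\setminus\{0\}\,:\ \sigma<0,\ \tau<0,\ a\leq \tau/\sigma\leq b\bigr\},
$$
because $\tilde{f}(h_j^2\lambda)\neq 0$ forces $h_j^2\lambda\in[a,b]$, and then the point $(-h_j^{-1},-h_j\lambda)$ lies in $\mathcal{C}$ (both coordinates negative, ratio exactly $h_j^2\lambda$).

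Next, given an arbitrary point $(x_0,s_0,t_0,\xi_0,\sigma_0,\tau_0)\in T^*_b(\overline{\Omega}\times\Ra^2)$ whose $(\sigma_0,\tau_0)$ component lies outside the closure of $\mathcal{C}$ in the cosphere---equivalently, $\sigma_0\geq 0$, or $\tau_0\geq 0$, or $\tau_0/\sigma_0\notin[a,b]$---I would pick $\Phi\in S^0(\Ra^2)$, homogeneous of degree $0$ outside a compact set, elliptic at $(\sigma_0,\tau_0)$, and supported in a closed conic neighborhood disjoint from $\mathcal{C}$. This is possible because $0<a\leq b<\infty$ makes the closure of $\mathcal{C}$ on the unit circle a compact arc strictly contained in the open third quadrant. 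By the previous step, $\Phi(D_s,D_t)U_j=0$ for every $j$, and term-by-term summation---legitimate since the series defining $U$ converges in $\mathcal{D}'(\Ra^2\times\Omega)$ and $\Phi(D_s,D_t)$ is continuous on $\mathcal{D}'$---yields $\Phi(D_s,D_t)U=0$ as a distribution.

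Finally, $\Phi(D_s,D_t)$ differentiates only in $(s,t)$, which are tangent to $\partial\Omega\times\Ra^2$ at every boundary point; it is therefore a tangential pseudodifferential operator, elliptic at the chosen point. The strict identity $\Phi(D_s,D_t)U=0$ then excludes $(x_0,s_0,t_0,\xi_0,\sigma_0,\tau_0)$ from $WF_b(U)$ whether $x_0$ is interior or on $\partial\Omega$. The only delicate point is the joint functional calculus identity for $\Phi(D_s,D_t)$ applied to the distributional sum; this causes no trouble here because the identity holds term-wise as an exact equality and the multiplier annihilates (rather than merely smooths) each term, so no convergence or remainder analysis is required.
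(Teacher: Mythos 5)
Your multiplier computation is correct and cleanly packages what the paper does in several stages into a single conic multiplier $\Phi$ supported away from $\mathcal{C}$. However, there is a genuine gap: the argument does not exclude covectors with $(\sigma_0,\tau_0)=(0,0)$ and $\xi_0\neq 0$, and the strict inequalities $\sigma<0$, $\tau<0$ in the statement require precisely that this case be ruled out. A tangential multiplier $\Phi(D_s,D_t)$ of order zero has principal symbol depending only on $(\sigma,\tau)$; near a direction $(\xi_0,0,0)$ with $\xi_0\neq 0$, the ratio $(\sigma,\tau)/|(\sigma,\tau)|$ sweeps the whole circle as you approach the direction, so $\Phi$ (which vanishes on $\mathcal{C}$) cannot be elliptic there. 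Your multiplier argument therefore only yields the weaker conclusion $\sigma\leq 0$, $\tau\leq 0$, $a\sigma\geq\tau\geq b\sigma$.

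To close the gap one must use the equation: $U$ solves $(D_sD_t-P)U=0$, so by microlocal elliptic regularity $WF_b(U)$ is contained in the characteristic variety $\{p(x,\xi)=\sigma\tau\}$. If $\tau=0$, the already established inequalities $a\sigma\geq\tau$ and $b\sigma\leq\tau$ with $a,b>0$ force $\sigma=0$, hence $p(x,\xi)=0$, hence $\xi=0$ by ellipticity of $p$; the whole covector vanishes, which is excluded in $WF_b$. The same reasoning applies if $\sigma=0$. This is exactly how the paper finishes, after first establishing $\tau\leq 0$, $\sigma\leq 0$ and the two one-sided ratio bounds by separate multiplier arguments. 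Your approach is otherwise sound and arguably more economical for the cone-inclusion part, but the equation for $U$ must enter at the end; it cannot be dispensed with.
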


\noindent {\it Proof.} Let us assume that $WF_b( U ) $ is non empty. Let $ \varrho $ be a zero order symbol on $ \Ra $ equal to $1$ near $ + \infty $ and to $ 0 $ near $ - \infty $. Then
$$ \varrho (D_t) U = \sum_h  e^{-i \frac{s}{h}} e^{-ithP} \varrho (-h P) \tilde{f} (h^2 P) P^k \chi_0 u_0  $$
  and $  \varrho (-h P) \tilde{f} (h^2 P) =  \varrho (-h^2 P / h) \tilde{f} (h^2 P) = 0  $ for $ h \ll 1 $ using the supports of $ \tilde{f} $ and $ \varrho $. Thus there are finitley many terms in the above sum and $ \varrho (D_t) U $ belongs to $ C^{\infty} (\Ra^2 \times \overline{\Omega}) $. This shows that $ \tau \leq 0 $ on $ WF_b (U) $. Similarly
$$ \varrho (D_s) U = \sum_h  \varrho (-1/h)  e^{-i \frac{s}{h}} e^{-ithP}  \tilde{f} (h^2 P) P^k \chi_0 u_0  $$
has finitely many non vanishing terms since $ h > 0 $. This shows that $ \sigma \leq 0 $ on $ WF_b (U) $. Let now $ (\sigma_0,\tau_0) \ne (0,0) $ be a point in the open cone $ \{ a \sigma <  \tau \} $ of $ \Ra^2 \setminus 0 $. Let $ \chi \in S^{0} (\Ra^2) $ be an elliptic symbol equal to one on a conic neighborhood of $ (\sigma_0,\tau_0) $ and supported in $ \{ a \sigma <  \tau \} $. Then
$$ \chi (D_s , D_t) U  = \sum_h  e^{-i \frac{s}{h}} e^{-ithP} \chi (-1/h , - h^2 P / h)  \tilde{f} (h^2 P) P^k \chi_0 u_0 $$
vanishes identically since $ \chi (-1/h , - \lambda / h)  \tilde{f} (\lambda) = 0  $ since, for $ \lambda \geq a $,
$$  \left( - \frac{1}{h}  , - \frac{\lambda}{h} \right) \notin  \{ a \sigma <  \tau \}  . $$
Thus $ WF_b (U) $ is contained in $ \{ a \sigma \geq \tau  \} $. Similarly $  WF_b (U) $ is contained  $ \{ b \sigma \leq \tau  \}  $. In particular, neither $ \tau $ nor $ \sigma $ can vanish on $ WF_b (U) $, otherwise if, say, $ \tau = 0 $ then $ \sigma = 0 $ and then $ \xi = 0 $ for $ WF_b (U) $ is contained in $ \{p (x,\xi) = \sigma \tau \} $, while $ (\sigma,\tau,\xi)  $ shoud be non zero. Thus $ \sigma < 0 $ and $ \tau < 0 $ on $ WF_b (U) $. Taking those conditions into account, the domain $  \{ a \sigma \geq \tau  \} \cap \{ b \sigma \leq \tau  \}  $ can be written $ \{ a \leq \tau / \sigma \leq b \} $. This completes the proof.
      \finpreuve

\begin{prop}[Rough estimate on $ WF_b (U) $ at $ t=0$] Assume that $ \chi_0 $ is supported in $ B (0,R) $ with $ R \gg 1 $. Then, for some $ \delta > 0 $ small enough,
$$  WF_b (U) \cap \{ |t| < \delta \} \ \ \mbox{is contained in} \ \ \{ |x| \leq R \} . $$
\end{prop}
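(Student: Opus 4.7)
The plan is to prove directly that $U$ is smooth up to the boundary on $\Ra_s \times (-\delta,\delta)_t \times \{|x|\geq R\}$ for $\delta>0$ small enough, which is equivalent to the claimed wavefront inclusion. Finite speed of propagation being unavailable for the Schr\"odinger group, the workhorse will be the semiclassical almost-finite-speed estimate of Lemma~\ref{lemmEgorovalinfini}, applied term by term to the Lebeau decomposition
\[
U=\sum_{h_j=2^{-j}} V^{h_j}, \qquad V^{h_j}(s,t,x):=e^{-is/h_j}e^{-ith_j P}\tilde f(h_j^2 P) P^k \chi_0 u_0(x).
\]
I would first pick $r_0<R$ with $\mbox{supp}(\chi_0)\subset B(0,r_0)$, set $\delta:=(R-r_0)/(2C)$ where $C$ is the transport constant of that lemma, and then choose $\chi\in C^\infty(\overline\Omega)$ equal to $1$ on $\{|x|\geq R\}$ and supported in $\{|x|>(r_0+R)/2\}$, together with $\chi_1,\chi_2\in C_0^\infty(\Ra^n)$ equal to $1$ on $\overline{B(0,r_0+C\delta)}$ and supported in $\{|x|<(r_0+R)/2\}$, so that $(1-\chi_1)\chi(1-\chi_2)=\chi$.

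The central step is to apply Lemma~\ref{lemmEgorovalinfini} with the Schr\"odinger propagator $U_h(t)=e^{-ithP}$ (i.e.\ $\psi_h(\lambda)=\lambda$), using the identity $\tilde f(h^2 P)P^k = h^{-2k}\tilde g(h^2 P)$ with $\tilde g(\lambda):=\lambda^k\tilde f(\lambda)\in C_0^\infty(0,+\infty)$. One then obtains for every $N\in\Na$
\[
\Big\| \chi\, e^{-ith_j P}\,\tilde f(h_j^2 P)\, P^k \chi_0 \Big\|_{L^2 \to L^2} \leq C_N\, h_j^N, \qquad |t|\leq\delta,
\]
which is the substitute for finite speed of propagation. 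Replacing $k$ by $k+\ell$ in the same identity and using that $P^\ell$ commutes with $e^{-ithP}$ and $\tilde f(h^2 P)$ yields an analogous bound for $\chi P^\ell V^{h_j}$; elliptic regularity with the Dirichlet condition then promotes this to control in every Sobolev norm in $x$. The $s$- and $t$-derivatives produce only factors $(-i/h_j)^p$ and $(-ih_j)^\ell P^\ell$, both absorbed by enlarging $N$. Summing over the dyadic sequence, every partial derivative of $\chi V^{h_j}$ has $L^2$-norm $O(h_j^\infty)\|u_0\|_{L^2}$, so $\chi U=\sum_j\chi V^{h_j}$ converges absolutely in $C^\infty(\Ra_s\times(-\delta,\delta)_t\times\overline\Omega)$. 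Since $\chi\equiv 1$ on $\{|x|\geq R\}$, $U$ is smooth there, which gives the claim.

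The main technical point will be to verify that Lemma~\ref{lemmEgorovalinfini} continues to hold in the presence of the obstacle. Its proof is an Egorov-type iteration carried out in a region that, thanks to the support conditions linking $\chi_0$ to $\chi_1,\chi_2$, stays disjoint from $\partial\Omega$ throughout $|t|\leq\delta$; the obstacle is therefore invisible to the argument and the lemma transfers with only cosmetic modifications. Once this is granted, the entire proof reduces to the dyadic summation above, and neither Proposition~\ref{flowMSSchrod} nor Melrose--Sj\"ostrand propagation is needed at this rough stage --- both will enter only when propagating the wavefront set forward in $t$ into an outgoing region, in the Schr\"odinger analog of Proposition~\ref{propwf}.
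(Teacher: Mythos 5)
Your proposal follows essentially the same route as the paper: both apply the almost-finite-speed estimate of Lemma~\ref{lemmEgorovalinfini} (which is already stated and proved in the obstacle setting, so no transfer is needed) term by term to the dyadic decomposition of $U$, with $\delta$ chosen small enough that the transport constant keeps everything inside $B(0,R)$, and then upgrade the $L^2$ estimate to smoothness by absorbing the $h^{-1}$-factors coming from $P^k$ and from $(s,t)$-derivatives into the $O(h^{\infty})$ gain. The only cosmetic difference is in how one reaches Sobolev regularity in $x$ --- the paper applies $\Delta^m(1-\tilde\chi)(x)$ directly, exploiting that the lemma allows symbols of arbitrary order, whereas you shift $k\to k+\ell$ and invoke interior elliptic regularity (note this is interior regularity away from $\partial\Omega$, not the Dirichlet problem, since $\chi$ is supported away from the obstacle; also your cutoffs $\chi_1,\chi_2$ cannot be both equal to $1$ on $\overline{B(0,r_0+C\delta)}$ and supported in the open ball of the same radius $(r_0+R)/2$, so take $\delta$ strictly smaller than $(R-r_0)/(2C)$).
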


\noindent {\it Proof.} Let $ R_0 < R $ be such that $ \chi_0 $ is supported in $ \bar{B} (0,R_0) $. Let $ \tilde{\chi} \in C_0^{\infty} (\Ra^n) $ be equal to $1$ near $ B (0,R_0) $ and supported in $ B (0,R) $. Let $ m \in \Na $. Then, by Lemma~\ref{lemmEgorovalinfini}, there exists $ t_0 > 0 $ (such that $ R - C t_0 > R_0 $) such that
$$ \Delta^m (1 - \tilde{\chi}) (x) e^{-ithP} \tilde{f} (h^2 P)  P^k \chi_0 u_0 = O_{L^2} (h^{\infty}) $$
uniformly in $ t \in (-t_0,t_0) $. The same holds for derivatives in $t$ so, one easily infers that
$$ (1 - \tilde{\chi})(x) U (x,s,t) \in C^{\infty} (\Ra \times (-t_0,t_0) \times \Ra^n  ) . $$
The result follows. \finpreuve

The generalized bicharacteristic flow is obtained away from the boundary by the equations
$$ \dot{x} = \frac{\partial p}{\partial \xi}  , \qquad \dot{s} = - \tau , \qquad \dot{t} = - \sigma, \qquad \dot{\xi} = -\frac{\partial p}{\partial x}, \qquad \dot{\sigma} = \dot{\tau} = 0$$
and the modifications indicated in the previous paragraph at the boundary. It is defined on the the subset $ \{p (x,\xi) - \sigma \tau = 0\} $ of $ T^* (\Ra^2 \times \Omega) \setminus 0 \sqcup T^(\Ra^2 \times \partial \Omega) \setminus 0 $.
By Proposition~\ref{flowMSSchrod}, we only have to consider those $ \sigma , \tau $ such that $ \tau / \sigma $ belongs to the compact interval $ [a,b] $. By homogeneity, one can reparametrize each such bicharacteristic in a way that $ p (x,\xi) = \tau \sigma = 1/4 $ so that the characteristic obtained by projection on $ \overline{\Omega} $ is a normalized characteristic as in the previous section. The only difference is that $ \sigma $ is not equal to $1/2$ as for the wave equation, i.e. that one cannot parametrize such normalized curves by $t$ in general; nevertheless, the conditions $ \sigma \tau = 1/4 $ and $ \tau / \sigma \in [a,b] $ imply that
$$ \dot{t} = - \sigma \in [ - \frac{1}{2 \sqrt{b}} , - \frac{1}{2 \sqrt{a}} ]  .$$
Thus, the non-trapping condition of Definition~\ref{nontrappingobstacle} implies that the above characteristics leave any compact set (locally uniformly with respect to initial conditions in a compact set) when $t$ becomes large enough. In particular, this leads to

\begin{prop}[Propagating the wavefront set in an outgoing region] Assume the non-trapping condition. Let $ R \gg 1 $ and $ 0 < \epsilon \ll 1 $. Then for all $ T  $ large enough, one can find $ \delta > 0 $ and $ R^{\prime} \gg 1 $ such that
$$ WF_b (U) \cap \{ |t-T| < \delta \} \subset \left\{  R^{\prime} > |x| > R \ \  \mbox{and} \ \  \frac{x}{|x|} \cdot \frac{\xi}{|\xi|} > \epsilon - 1  \right\}  . $$
\end{prop}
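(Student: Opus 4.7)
The plan is to follow the argument of Proposition~\ref{propwf} essentially verbatim, inserting the reparametrization trick that was sketched in the paragraphs preceding the statement. The three essential ingredients are: the rough a priori localization of $WF_b(U)$ near $t=0$ provided by the previous proposition; the invariance of $WF_b(U)$ under the generalized bicharacteristic flow of Melrose--Sj\"ostrand; and the non-trapping hypothesis of Definition~\ref{nontrappingobstacle}.

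First I would reduce to studying normalized generalized bicharacteristics. By Proposition~\ref{flowMSSchrod}, on $WF_b(U)$ we have $\sigma < 0$, $\tau < 0$ and $c := \tau/\sigma \in [a,b]$. Using the homogeneity of the bicharacteristic flow in $(\xi,\sigma,\tau)$, I would reparametrize on the characteristic set $\{p(x,\xi)=\sigma\tau\}$ so that $\sigma\tau = 1/4$. This forces $\sigma = -1/(2\sqrt c)$, hence $\dot t = -\sigma \in [1/(2\sqrt b),1/(2\sqrt a)]$. Thus $t$ is a strictly increasing function of the flow parameter with uniformly bounded rate, and the $(x,\xi)$-projection of the flow is, up to a smooth reparametrization by $t$, a normalized generalized characteristic in the sense of Section~\ref{boundary}.

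Next I would combine the previous proposition, which confines $WF_b(U) \cap \{|t|<\delta\}$ to $\{|x|\leq R_0\}$ for some $R_0$, with the Melrose--Sj\"ostrand propagation theorem to deduce that every point of $WF_b(U) \cap \{|t-T|<\delta\}$ sits on a generalized bicharacteristic whose $x$-projection starts in the compact $K := \bar B(0,R_0)$ at time $t \approx 0$. The non-trapping hypothesis applied to $K$ produces a time $T_0$ such that for $T \geq T_0$ any such projection lies in $\{|x|>R\}$ at time $T$; that $\dot t$ is uniformly bounded away from $0$ ensures that $T_0$ can be chosen independently of the starting point and of the chosen bicharacteristic. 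Continuity of the flow over the compact parameter interval corresponding to $|t-T|<\delta$ also yields an upper bound $|x|<R'$ by shrinking $\delta$.

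Finally, once $|x|$ is sufficiently large the projection no longer meets the obstacle and becomes an ordinary geodesic escaping to infinity, so that Appendix~\ref{refoutgoing}, invoked exactly as in Proposition~\ref{propwf}, gives the outgoing condition $\frac{x}{|x|}\cdot\frac{\xi}{|\xi|} > \epsilon - 1$ after possibly enlarging $T$. The only subtle point — more bookkeeping than genuine obstacle — is handling the potential non-uniqueness of the generalized bicharacteristic passing through a given point; as in Proposition~\ref{propwf} this is absorbed by the precise formulation of the Melrose--Sj\"ostrand theorem, which asserts that $WF_b(U)$ is a union of (possibly non-unique) maximally extended generalized bicharacteristics, so that it is enough to check that each such curve, flowed forward long enough, satisfies the required spatial and directional constraints.
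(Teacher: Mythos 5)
Your proposal is correct and follows essentially the same route the paper has in mind: the paper states this proposition without a written proof, relying on the reparametrization remarks in the paragraph immediately preceding it together with the argument already given for the wave equation in Proposition~\ref{propwf}, and your write‑up is exactly the detailed assembly of those pieces (rough $WF_b$ estimate near $t=0$, Proposition~\ref{flowMSSchrod} to control $\sigma,\tau$, reparametrization to $\sigma\tau=1/4$, Melrose--Sj\"ostrand propagation plus the non‑trapping condition, and Appendix~\ref{refoutgoing} to enter an outgoing region). One small remark: your conclusion $\dot t = -\sigma \in [1/(2\sqrt b),\,1/(2\sqrt a)]$ is the correct sign, since $\sigma<0$ on $WF_b(U)$; the interval $[-\frac{1}{2\sqrt b},-\frac{1}{2\sqrt a}]$ printed in the paper is a sign typo, and what matters --- that $|\dot t|$ is bounded away from $0$ and from $\infty$ uniformly on the characteristic set --- is what you use.
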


We finally interpret the result semiclassically. 

\begin{prop} For any given outgoing area one can pick $ T \gg 1 $ and $ \chi_+ $ supported  in this outgoing area such that, for each $N$, 
$$ f (h^2 P) e^{- i T h P} \chi_0 = \chi_+ (x,hD) e^{-iT h P} \chi_0 + h^N \scal{x}^{-N} B_N (h) , $$
for all $ h = 2^{-j} $; here $ B_N (h) $ is uniformly bounded on $L^2 $.
\end{prop}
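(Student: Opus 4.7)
Plan: The argument mimics the proof of Proposition~\ref{demandedutravail}, replacing the wave-equation wavefront input by the Lebeau-distribution analysis just built. With $\tilde f \in C_0^\infty(0,\infty)$ chosen equal to $1$ near $\mathrm{supp}(f)$, introduce
\[ U^{(0)}(s,t,x) \,=\, \sum_{h = 2^{-j}} e^{-is/h}\, e^{-ithP}\tilde f(h^2 P)\chi_0 u_0 . \]
By the outgoing-wavefront proposition stated just above, for $T$ large enough $WF_b(U^{(0)}) \cap \{|t-T|<\delta\}$ is contained in the prescribed outgoing region of $(x,\xi)$.

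\emph{Step 1 (reduction to a compact spatial set).} First, I would apply Lemma~\ref{lemmEgorovalinfini} with $U_h(s) = e^{-ishP}$ at $s=T$ to write
\[ e^{-iThP}f(h^2P)\chi_0 \,=\, \chi_1(x)\, e^{-iThP}f(h^2P)\chi_0 + \scal{x}^{-N}O_{\mathcal{L}(L^2)}(h^N) \]
for some $\chi_1 \in C_0^\infty$ equal to $1$ on $B(0, R+CT)$, and split $\chi_1 = \rho + \varrho$ with $\rho$ supported in a small neighborhood of the obstacle and $\varrho$ in an annulus disjoint from the obstacle.

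\emph{Step 2 (main step: killing the near-boundary piece).} The heart of the proof is the analog of Lemma~\ref{lemmata2}, namely $\|\rho\, e^{-iThP} f(h^2P)\chi_0\|_{L^2 \to L^2} = O(h^\infty)$. Since $\rho$ is supported outside the outgoing area, the wavefront proposition gives that $\rho U^{(0)} \in C^\infty$ on the slab $\mathbb{R}_s \times \{|t-T|<\delta\} \times \overline\Omega$; by a closed-graph argument, $u_0 \mapsto \rho U^{(0)}$ is then continuous from $L^2(\Omega)$ to $C^\infty$ on the slab (Fr\'echet topology). Pick $a \in C_0^\infty(\mathbb{R})$ with $a(-1)=1$ and $\mathrm{supp}(a) \subset (-\tfrac{3}{2},-\tfrac{1}{2})$; then the rescaled cutoff $a_h(\sigma) := a(h\sigma)$ satisfies $a_h(-1/h') = \delta_{h,h'}$ for $h,h' \in \{2^{-j}\}$, hence
\[ a_h(D_s)\bigl[\rho U^{(0)}\bigr](s,T,x) \,=\, e^{-is/h}\,\rho\, e^{-iThP}\tilde f(h^2P)\chi_0 u_0 . \]
Since $\|a_h(D_s)\|_{H^m_s \to L^2_s} = O(h^m)$ and $\rho U^{(0)}$ has uniform $C^m_{s,x}$ bounds on compacts of the slab, this yields $\|\rho\, e^{-iThP}\tilde f(h^2P)\chi_0\|_{L^2 \to L^2} = O(h^\infty)$. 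Finally, using $f(h^2P) = \tilde f(h^2P) f(h^2P)$, the commutation $[e^{-iThP},\tilde f(h^2P)] = 0$, and the standard semiclassical identity $f(h^2P)\chi_0 = \chi_0'\, f(h^2P)\chi_0 + O(h^\infty)$ with $\chi_0' \in C_0^\infty$ equal to $1$ near $\mathrm{supp}(\chi_0)$, I would conclude the announced operator-norm bound.

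\emph{Step 3 (outgoing factorization on the annulus, assembly).} For the annulus piece, pick $\tilde\varrho \in C_0^\infty$ equal to $1$ on $\mathrm{supp}(\varrho)$ and supported away from the obstacle. The wavefront propagation applied to $\tilde\varrho U^{(0)}(s,T,\cdot)$ provides a classical pseudodifferential factorization
\[ \tilde\varrho\, e^{-iThP}\tilde f(h^2P)\chi_0 \,=\, a(x,D)\varrho_0\, e^{-iThP}\tilde f(h^2P)\chi_0 + \mathcal{R}, \]
with $a \in S^0_{\mathrm{class}}$ supported in the outgoing area and $\mathcal{R}:L^2 \to C_0^\infty(\Omega)$ (the uniformity in $h$ coming from the same mode-extraction trick as in Step 2). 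Commuting $f(h^2P)$ past $e^{-iThP}$ and collapsing the product $\varrho f(h^2P) a(x,D)\varrho_0$ by semiclassical symbolic calculus produces a semiclassical symbol $\chi_+(x,\xi,h) \in S^{-\infty, 0}$ supported in the outgoing area, modulo $\scal{x}^{-N}O(h^N)$; and $f(h^2P)\mathcal{R} = \scal{x}^{-N}O(h^N)$ because $f$ vanishes near $0$. Assembling Steps 1--3 yields the stated identity. The main obstacle is Step 2: extracting uniform-in-$h$ operator-norm $O(h^\infty)$ decay from a single smoothness statement on $U^{(0)}$; this works precisely because the frequencies $\{-1/h\} = \{-2^j\}$ are dyadically separated, so a single semiclassical cutoff $a(hD_s)$ cleanly selects one mode, and the $H^m_s \to L^2_s$ norm of this cutoff is $O(h^m)$.
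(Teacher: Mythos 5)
Your proof follows essentially the same strategy as the paper's: introduce the Lebeau distribution $U^{(0)}$, transfer the $WF_b$ propagation of singularities into a semiclassical statement by dyadic mode extraction in the auxiliary $s$-variable, and then rerun the mechanics of the proof of Proposition~\ref{demandedutravail}. The paper's mode-extraction device is the Fourier pairing $\int_{\Ra}\phi(s)\,e^{is/h}\,\chi\, f(h^2P)U\,ds$ with $\phi\in C_0^\infty$ of unit integral, rather than your spectral cutoff $a(hD_s)$; the two are interchangeable and both rest on the dyadic separation of the $s$-frequencies $\{-2^j\}$. One point you should tighten is Step~2: you cannot apply $a_h(D_s)$ to $\rho\,U^{(0)}$ directly, since this is a sum of pure exponentials in $s$ and hence nowhere near $H^m_s(\Ra)$; you must first localize with a compactly supported $\psi(s)$, which destroys the exact Kronecker selection $a_h(-1/h')=\delta_{h,h'}$ and leaves cross-terms controlled only up to $O(h^\infty)$ by the Schwartz decay of $\hat\psi(\tfrac{1}{h'}-\tfrac{1}{h})$ — exactly the bookkeeping the paper's $\int\phi(s)e^{is/h}\,\cdot\,ds$ pairing takes care of automatically. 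Finally, the paper obtains the key $O(h^\infty)$ bound of your Step~2 not through the closed-graph/$H^m_s$ route but by showing that $\chi(x)\,U(s,T,x)$ belongs to $D(P^k)$ for all $k$ (the Schr\"odinger analogue of Corollary~\ref{hyppropsing}) and using $\|P^{-k}f(h^2P)\|=O(h^{2k})$, as in Lemma~\ref{lemmata2}; your variant is a legitimate alternative since both arguments rest on the same $WF_b$ propagation input, but the paper's route parallels the wave case more literally.
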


\noindent {\it Proof.} We compute first
\begin{equation}
 f (h^2 P) U = e^{-\frac{i}{h}s} e^{-ith P} f (h^2 P) \chi_0 u_0 +  \sum_{1 \leq |\ell| \leq C}  e^{- 2^{\frac{\ell}{2}} \frac{i}{h} s} e^{-it2^{-\frac{\ell}{2}}h P}  f (h^2 P) \tilde{f} (2^{-\ell} h^2 P) \chi_0 u_0  , \label{pourleFourier}
\end{equation}
(for some $C$ depending only on $f,\tilde{f}$). Then by selecting $ \phi \in C_0^{\infty} (\Ra) $ with integral $1$, we obtain
$$ \chi  e^{-ith P} f (h^2 P) \chi_0 u_0 = \int_{\Ra} \phi (s) e^{\frac{i}{h}s} \chi f (h^2 P) U ds + h^N \scal{x}^{-N} B_N (h,t) u_0 ,  $$
with $ B_N (t,h) $ bounded on $ L^2 $ uniformly in $h$ and locally uniformly in $t$; indeed, the contribution of the terms in the sum (\ref{pourleFourier}) is negligeable for we get factors of the form $ \hat{\phi} (\frac{2^{\ell/2} -1}{h}) $ which are $ O (h^{\infty}) $ since $ \ell \ne 0 $. The fast decay in $x$ is provided by the cutoff $ \chi $.
One can then repeat the arguments of Proposition~\ref{demandedutravail} by exploiting in particular that
$ \chi (x) U (s,T,x) $ belong to $ D (P^k) $ for all $k$ if $T$ is large enough.  \finpreuve

\noindent {\bf Proof of Theorem~\ref{nonspeclocobst} (Schr\"odinger equation).} We repeat the proof for the wave equation to get
$$ \Big\|  \scal{x}^{-\nu} f (h^2 P) e^{-ithP} \scal{x}^{-\nu} \Big\| \leq C \scal{t}^{-\nu^{\prime}} $$
with semiclassical time scaling.  Since the low frequency part prevents from decaying faster than $ \scal{t}^{-n/2} $ (in non semiclassical times), we use the above estimate with $ \nu^{\prime} = n/2 $ where changing $ t $ to $ t /h $ provides a decay of order $ h^{\frac{n}{2}} |t|^{-n/2} $.  One concludes again thanks to the dyadic partition of unity using $f$ where the gain $ h^{n/2} $ provides the smoothing effect $ L^2 \rightarrow P^{n/4} $.
 \finpreuve
 
\appendix

\section{Optimality of the estimates} \label{appendiceoptimal}
\setcounter{equation}{0}
In this appendix, we briefly justify the optimality of the upper bounds (\ref{Schrodloc}) and (\ref{KGloc}) in all dimensions and (\ref{Waveloc0}) and (\ref{Waveloc1}) in even dimensions. We recall first standard facts on the fundamental solutions of the Schr\"odinger and wave equations, say for $ t > 0 $. We have
$$ e^{it \Delta} \delta_0 (x) =  t^{-\frac{n}{2}} S \big(|x| / t^{\frac{1}{2}} \big)  $$
and
$$ \frac{\sin (t \sqrt{-\Delta})}{\sqrt{-\Delta}} \delta_0 (x) = \begin{cases} 0 &   n \ \mbox{odd} \\ t^{1-n} W (|x|/t)
& n \ \mbox{even}  \end{cases} , \qquad t > |x|$$
with
$$ S (r) = \frac{1}{(4 i \pi)^{\frac{n}{2}}} e^{i\frac{r^2}{4}}, \qquad  W ( r) =  c_n (1-r^{2})^{- \frac{n-1}{2}}$$
for some irrelevant constant $c_n$. Since $ S $ and $W$ do not vanish at zero (where it is understood that we only consider even dimensions  for the wave equation), it is easy to check that for $ \varphi \in C_0^{\infty} (\Ra^n) $ with non vanishing integral we have as $t \rightarrow + \infty $
\begin{equation}
 e^{it \Delta} \varphi (x) \sim t^{-\frac{n}{2}} S (0) \int \varphi   \qquad \mbox{and} \qquad \frac{\sin (t \sqrt{-\Delta})}{\sqrt{-\Delta}} \sim t^{1-n} W (0)  \int \varphi   \label{optimSW} 
\end{equation}
uniformly in $ x $ in a given compact set. This proves the optimality of (\ref{Schrodloc}) and (\ref{Waveloc1}) (the case of~\eqref{Waveloc0} being similar).
For the Klein-Gordon equation, the fundamental solution  has a more complicated expression (see e.g. \cite[p. 692]{CoHi} for a construction). However, to prove the optimality of the estimate, the following asymptotic behaviour is sufficient. If $ f \in C_0^{\infty} (\Ra) $ is supported close enough to zero, so that the phase $ \sqrt{|\xi|^2 + 1} = 1 + |\xi|^2/2 + O (|\xi|^4) $ is non degenerate on the support of $ f (|\xi|^2) $, we obtain by stationary phase asymptotics
$$ \cos \big( t \sqrt{-\Delta + 1} \big) f (-\Delta) \delta_0 (x) = t^{-\frac{n}{2}} KG_t (|x|/t) + O (t^{-\frac{n}{2}-1}) , $$
locally uniformly with respect to $x$ as $ t \rightarrow + \infty$, with
$$ KG_t (r) = (2  \pi)^{-\frac{n}{2}} \cos \big( t \sqrt{1-r^2}  + n \pi/4 \big) (1-r^2)^{- \frac{n+2}{4}} f \left( \frac{r^2}{1-r^2} \right) . $$
In particular, if $ \varphi \in C_0^{\infty} (\Ra^n) $ has non zero integral, and if we let $ t_j = 2 \pi j - n \pi / 4 \rightarrow +\infty $ as $ j \rightarrow + \infty $, we obtain
\begin{equation}
 \cos (t_{j} \sqrt{-\Delta + 1}) f (-\Delta) \varphi (x) \sim  t_{j}^{-\frac{n}{2}} (2 \pi)^{-\frac{n}{2}} \int \varphi ,  \label{optimKG} 
 \end{equation}
locally uniformly in $x$. This proves the optimality of (\ref{KGloc}). We note that the optimal lower bounds obtained from (\ref{optimSW}) and (\ref{optimKG}) are due to initial data with non zero integral, i.e. which do have low frequencies on their supports on the Fourier side.

\section{Dirichlet condition for distributional solutions to the wave equation} \label{domainonde}
\setcounter{equation}{0}
Let $ k \in \Na $ and let $ D_k = \mbox{Dom} (P^k) $, equipped with the graph norm $ || u ||_{D_k} = || (P+1)^k u ||_{L^2} $. We define $ D^{\prime}_k $ as the  topological dual of $ D_k $. Using that $ (P+1)^k $ is an isomorphism between $ D_k $ and $ L^2 $ it is easy to check that for any $ T \in D_k^{\prime} $ there is a unique $ v \in L^2 $ such that
$$ T (u) =  ( v , (P+1)^k u )_{L^2} $$
for all $ u \in D_k $. Conversely, when $v \in L^2 $, we shall denote by $ (P+1)^k v  $ the linear form on $ D_k $, $ u \mapsto (v , (P+1)^k u)_{L^2} $. There is no ambiguity in the notation since if $v \in D_k $ then $ (P+1)^k v $ in the above sense is also the linear form  $ ( (P+1)^k v , .)_{L^2} $ that extends from $ D_k $ to $ L^2 $. In other words,
$$ D^{\prime}_k = \{ (P+1)^k v \ | v \in L^2 \} $$
and  the norm of $ D^{\prime}_k $ reads
$$ || (P+1)^k v ||_{D_k^{\prime}} = ||v ||_{L^2} . $$
For $ v \in L^2 $, we define $ e^{-it \sqrt{P}} (P+1)^k v  $ by duality since the adjoint of $ e^{-it \sqrt{P}} $ preserves $ D_k $ and is continuous thereon. It is easy to check that 
$$  e^{-it \sqrt{P}} (P+1)^k v  = (P+1)^k e^{-it \sqrt{P}} v  $$
and that the map $ t \mapsto e^{-it \sqrt{P}} (P+1)^k v   $ is continuous from $ \Ra $ to $ D_k^{\prime} $. Using that $ C_0^{\infty} (\Omega) $ is contained in $ D_k $,  $ e^{-it \sqrt{P}} (P+1)^k v $ defines a distribution on $ \Ra \times \Omega $. This distribution satisfies the wave equation,
\begin{equation}
 (\partial_t^2 + P) e^{-it \sqrt{P}} (P+1)^k v = 0 .  
\end{equation} 
 To see this, we can observe that if $ f  $ is a smooth cutoff equal to $1$ near $ 0 $,
$$  e^{-it \sqrt{P}} (P+1)^k v = \lim_{\epsilon \rightarrow 0}  e^{-it \sqrt{P}} (P+1)^k f (\epsilon P)v  $$
in the distributions sense with  $ e^{-it \sqrt{P}} (P+1)^k f (\epsilon P)v $ solution to the wave equation since $ (P+1)^k f (\epsilon P)v   $ belongs to $ \mbox{Dom}(P) $. 

Since the boundary is non characteristic for $ \partial_t^2 + P $, one can take the restriction of $ e^{-it \sqrt{P}} (P+1)^k v $ to the boundary $ \Ra \times \partial \Omega $. The Dirichlet condition
$$ \left.  e^{-it \sqrt{P}} (P+1)^k v \right|_{\Ra \times \partial \Omega} = 0  $$
is satisfied since one can write
$$  e^{-it \sqrt{P}} (P+1)^k v =  ( 1 - i \partial_t )^{2k+2} e^{-it \sqrt{P}}  (\sqrt{P} + 1)^{-2k-2}(P+1)^k v  $$
where, for every $t$, 
$$ \left. e^{-it \sqrt{P}}  (\sqrt{P} + 1)^{-2k-2}(P+1)^k v \right|_{ \partial \Omega} = 0 $$
since this is the trace of an element of $ \mbox{Dom} (P) $.

\section{Non-trapped geodesics escape in outgoing areas} \label{refoutgoing}
\setcounter{equation}{0}
We let $ (x(t),\xi(t)) $ be the bicharacteristic curves of $ p $, the principal symbol of $P$, and consider an energy shell  $ I \Subset (0,+\infty) $. We assume that $ (x(t),\xi (t)) \in p^{-1} (I) $. We consider the situation where $ x (0) $ belongs to a compact set so that, by the non-trapping condition and after possibly replacing $ ( x (0),\xi (0) ) $ by $ (x(T_0),\xi (T_0)) $ with $ T_0 \gg 1 $ uniform with respect to the choice of initial  $(x(0), \xi(0))$ in a compact set), we may assume without loss of generality that $ |x(t)|\gg 1 $ for all $ t \geq 0 $, uniformly with respect to the initial conditions $ (x(0),\xi (0)) $. In particular, the bicharacteristics can no longer meet the boundary (if any).
\begin{prop}  Given $ \varepsilon \in (-1,1) $ and $ R \gg 1 $, we can find $ T \gg 1 $ uniformly wrt $ (x(0),\xi (0)) $ such that, for $ t \geq T $, 
$$ |x(t)| > R \qquad \mbox{and} \qquad \frac{x(t) \cdot \xi (t)}{|x(t)||\xi(t)|} > \varepsilon - 1 . $$
\end{prop}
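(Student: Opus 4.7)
The plan is to set $A(t) = x(t) \cdot \xi(t)$ and $p_0 = p(x(t),\xi(t))$ (conserved along the bicharacteristic, and bounded below away from $0$ uniformly), then to show that $A(t)/t$ and $|x(t)|/t$ both converge to nonzero limits as $t \to \infty$ uniformly in the initial data, from which both conclusions follow by taking $T$ large enough. Note that the assumption that $x(0)$ lies in a compact set together with $p_0 \in I \Subset (0,\infty)$ and the uniform ellipticity of $g$ on compacts forces $\xi(0)$ also to lie in a compact set.

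First I will exploit the asymptotic flatness. Since $g^{jk} - \delta^{jk} \in S^{-\rho}$, one has $p(x,\xi) = |\xi|^2(1 + O(\langle x\rangle^{-\rho}))$, and because $|x(t)| \to \infty$ by the non-trapping hypothesis (uniformly in the initial data on compact sets), conservation of $p$ yields $|\xi(t)|^2 \to p_0$ and $|\dot x(t)| = 2\,|g(x(t))\,\xi(t)| \to 2\sqrt{p_0}$.

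Next I compute $\dot A$ via the Hamilton equations: $\dot A = \dot x \cdot \xi + x \cdot \dot \xi = \xi \cdot \nabla_\xi p - x \cdot \nabla_x p$. Since $p$ is homogeneous of degree $2$ in $\xi$, Euler's identity gives $\xi \cdot \nabla_\xi p = 2p$, while the symbol bound $\nabla_x g^{jk} = O(\langle x\rangle^{-\rho-1})$ yields $x \cdot \nabla_x p = O(|x|^{-\rho} |\xi|^2)$, which is $O(|x|^{-\rho})$ on the energy shell. Therefore
\[
\dot A(t) = 2p_0 + O\bigl(|x(t)|^{-\rho}\bigr) \xrightarrow[t\to\infty]{} 2p_0
\]
uniformly in the initial data, and integration gives $A(t)/t \to 2p_0$ uniformly.

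Finally, the asymptotic for $|x(t)|$ is obtained by squeezing. Cauchy-Schwarz gives $|x(t)|\,|\xi(t)| \geq A(t)$, hence $\liminf_{t\to\infty} |x(t)|/t \geq 2p_0/\sqrt{p_0} = 2\sqrt{p_0}$, while $|x(t)| \leq |x(0)| + \int_0^t |\dot x(s)|\,ds$ with $|\dot x(s)| \to 2\sqrt{p_0}$ yields the matching bound $\limsup |x(t)|/t \leq 2\sqrt{p_0}$. Uniformity of these convergences in the initial data then gives $|x(t)|/t \to 2\sqrt{p_0}$ uniformly; in particular $|x(t)| > R$ for $t \geq T$ large, and
\[
\frac{x(t) \cdot \xi(t)}{|x(t)|\,|\xi(t)|} \;=\; \frac{A(t)/t}{(|x(t)|/t)\,|\xi(t)|} \;\longrightarrow\; \frac{2p_0}{2\sqrt{p_0}\cdot\sqrt{p_0}} \;=\; 1,
\]
which eventually exceeds any $\varepsilon - 1 < 1$. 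The only subtle point is keeping track of uniformity in the initial data throughout, which propagates from the uniform escape statement provided by non-trapping and the uniform symbol bounds on $g^{jk} - \delta^{jk}$.
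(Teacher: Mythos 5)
Your proof is correct and follows the same strategy as the paper's: compute $\frac{d}{dt}\big(x(t)\cdot\xi(t)\big) = 2p_0 + O(|x(t)|^{-\rho})$ to get $x\cdot\xi \sim 2p_0\,t$, sandwich $|x(t)|/t$ between a lower bound (Cauchy--Schwarz against $x\cdot\xi$) and an upper bound (integrating the speed), and conclude $\frac{x\cdot\xi}{|x||\xi|}\to 1$ uniformly. The only differences are presentational: you organize the argument around uniform limits and Ces\`aro averaging where the paper carries explicit $\delta$-estimates, and you only need $|\xi(t)|\to\sqrt{p_0}$ rather than convergence of the vector $\xi(t)$ itself.
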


\noindent {\it Proof.}  Using that
\begin{eqnarray}
 p (x(t),\xi (t)) = |\xi (t)|^2 + O \big(\scal{x(t)}^{-\rho} \big), \qquad \frac{\partial p}{\partial x} \big( x (t),\xi (t) \big) = O (\scal{x}^{-1-\rho}) \label{estimeeenergie}
\end{eqnarray}
and the motion equations, we note first that
$$ \frac{d}{dt} \big( x (t) \cdot \xi (t) \big) = 2 p (x(0),\xi (0)) + O \big(\scal{x(t)}^{-\rho} \big) \geq c>0, $$
since we stay close enough to infinity so that the error term remains small, and we get 
\begin{eqnarray}
 x (t) \cdot\xi (t) \geq x (0) \cdot \xi (0) + c t , \qquad t \geq 0 , \label{condition1}
\end{eqnarray} 
for some positive constant $c$. In particular,
$$| x(t)| \gtrsim \scal{t} . $$
This implies that $ \xi (t) $ converges as  $ t \rightarrow \infty $ ($ \dot{\xi}(t)$ is integrable in time), uniformly wrt initial data. Thus, given any $ \delta > 0 $, we may select $ T_1 > 0 $ large enough so that 
$$ |\xi (t) - \xi (T_1)| \leq \delta , \qquad t \geq T_1 . $$
Using the first estimate of (\ref{estimeeenergie}) and after possibly increasing $ T_1 $, we may then improve (\ref{condition1}) into
\begin{eqnarray}
 x (t) \cdot\xi (t) \geq x (T_1) \cdot \xi (T_1) + 2( t - T_1 ) (|\xi(t)|^2 - \delta) , \qquad t \geq 0 . \label{condition2}
\end{eqnarray} 
On the other hand, using
$$ x (t) - x (T_1) = \int_{T_1}^t \frac{\partial p}{\partial \xi} (x(s),\xi (s)) ds $$
and that $ \partial p / \partial \xi = \xi + O (\scal{x}^{-\rho}) $, we see  after possibly increasing $ T_1 $ that
$$ |x(t)| \leq |x(0)| + 2 (t - T_1 ) (|\xi (t)| + \delta ) $$
uniformly with respect to the initial conditions. 
Using (\ref{condition2}), we conclude that
\begin{eqnarray*}
\frac{x(t) \cdot \xi (t)}{|x(t)||\xi (t)|} & \geq & O (\scal{t}^{-1}) + \frac{2 (|\xi (t)|^2 - \delta )( t - T_1 )}{\big( 2 (|\xi(t)|+\delta)(t-T_1) + |x(T_1)| \big)|\xi (t)|} \\
& \geq &  O (\scal{t}^{-1}) + O (\delta) + 1 .
\end{eqnarray*}
The result follows by taking $ t  $ large enough and $ \delta $ small enough. \finpreuve

\smallskip

\noindent {\bf Remark.} A simple adaption of this proof shows that one reaches any incoming area (up to the energy localization) far enough backward in time.

\end{document}